\documentclass[12pt]{amsart}
\usepackage{latexsym,amsmath,amsfonts,amscd,amssymb}
\usepackage{stmaryrd}
\usepackage{array}   
\usepackage[mathscr]{eucal} 
\usepackage{mathrsfs}
\usepackage{hyperref}
\usepackage{graphicx}
\usepackage{xypic}
\usepackage{calligra}
\usepackage[utf8]{inputenc}
\usepackage[T1]{fontenc}
\setlength{\parskip}{0.3\baselineskip}
\setlength{\extrarowheight}{5pt}
\setlength{\oddsidemargin}{5pt}
\setlength{\evensidemargin}{5pt}
\setlength{\textwidth}{460pt}
\setlength{\textheight}{650pt}
\setlength{\topmargin}{-20pt}
\DeclareFontFamily{OT1}{pzc}{}
\DeclareFontShape{OT1}{pzc}{m}{it}{<-> s * [1.10] pzcmi7t}{}
\DeclareMathAlphabet{\mathpzc}{OT1}{pzc}{m}{it}
\DeclareFontFamily{OT1}{rsfs}{}
\DeclareFontShape{OT1}{rsfs}{n}{it}{<->rsfs10}{}
\DeclareMathAlphabet{\curly}{OT1}{rsfs}{n}{it}
\theoremstyle{plain}
\newtheorem{theorem}{Theorem}[section]
\newtheorem*{theorem*}{Theorem}

\newtheorem{corollary}[theorem]{Corollary}
\newtheorem{lemma}[theorem]{Lemma}
\newtheorem{proposition}[theorem]{Proposition}

\theoremstyle{definition}
\newtheorem{definition}[theorem]{Definition}
\theoremstyle{remark}
\newtheorem{example}[theorem]{Example}
\newtheorem{remark}[theorem]{Remark}

\newtheorem*{claim*}{Claim}
\numberwithin{equation}{section}

\renewcommand{\le}{\leqslant}

\renewcommand{\ge}{\geqslant}

\newcommand{\R}{\mathbb{R}}

\newcommand{\Z}{\mathbb{Z}}
\newcommand{\C}{\mathbb{C}}

\DeclareMathOperator*{\Ima}{Im}

\newcommand{\gen}[1]{\left< #1 \right>}

\newcommand{\lie}{\mathfrak}

\newcommand{\U}{\mathrm{U}}

\DeclareMathOperator{\ad}{ad}
\DeclareMathOperator{\Ad}{Ad}

\DeclareMathOperator{\tr}{tr}

\DeclareMathOperator{\rank}{rank}

\DeclareMathOperator{\Hom}{Hom}
\DeclareMathOperator{\End}{End}

\DeclareMathOperator{\Id}{Id}

\DeclareMathOperator{\Aut}{Aut}
\DeclareMathOperator{\Int}{Int}

\DeclareMathOperator*{\aut}{Aut}

\DeclareMathOperator*{\GL}{GL}
\DeclareMathOperator*{\SL}{SL}
\DeclareMathOperator*{\SO}{SO}
\DeclareMathOperator*{\SU}{SU}
\DeclareMathOperator*{\Sp}{Sp}

\hyphenation{Higgs}

\renewcommand{\phi}{\varphi}

\newcommand{\liec}{\mathfrak{c}}
\newcommand{\liem}{\mathfrak{m}}
\newcommand{\liep}{\mathfrak{p}}
\newcommand{\liet}{\mathfrak{t}}

\newcommand{\lieh}{\mathfrak{h}}

\newcommand{\lieg}{\mathfrak{g}}
\newcommand{\liee}{\mathfrak{e}}
\newcommand{\liek}{\mathfrak{k}}

\newcommand{\liez}{\mathfrak{z}}
\newcommand{\liegl}{\mathfrak{gl}}
\newcommand{\liesu}{\mathfrak{su}}
\newcommand{\lieso}{\mathfrak{so}}
\newcommand{\liesp}{\mathfrak{sp}}
\newcommand{\liesl}{\mathfrak{sl}}

\renewcommand{\phi}{\varphi}

\usepackage{tikz-cd}
\usetikzlibrary{calc}
\tikzset{curve/.style={settings={#1},to path={(\tikztostart)
			.. controls ($(\tikztostart)!\pv{pos}!(\tikztotarget)!\pv{height}!270:(\tikztotarget)$)
			and ($(\tikztostart)!1-\pv{pos}!(\tikztotarget)!\pv{height}!270:(\tikztotarget)$)
			.. (\tikztotarget)\tikztonodes}},
	settings/.code={\tikzset{quiver/.cd,#1}
		\def\pv##1{\pgfkeysvalueof{/tikz/quiver/##1}}},
	quiver/.cd,pos/.initial=0.35,height/.initial=0}
\tikzset{tail reversed/.code={\pgfsetarrowsstart{tikzcd to}}}
\tikzset{2tail/.code={\pgfsetarrowsstart{Implies[reversed]}}}
\tikzset{2tail reversed/.code={\pgfsetarrowsstart{Implies}}}
\tikzset{no body/.style={/tikz/dash pattern=on 0 off 1mm}}

\begin{document}
\title[Cyclic Higgs bundles and the Toledo invariant]
{Cyclic Higgs bundles and the Toledo invariant}

\author[Oscar Garc{\'\i}a-Prada]{Oscar Garc{\'\i}a-Prada}
\address{Instituto de Ciencias Matem\'aticas \\
  CSIC-UAM-UC3M-UCM \\ Nicol\'as Cabrera, 13--15 \\ 28049 Madrid \\ Spain}
\email{oscar.garcia-prada@icmat.es}
\author[Miguel González]{Miguel González}
\address{Instituto de Ciencias Matem\'aticas \\
	CSIC-UAM-UC3M-UCM \\ Nicol\'as Cabrera, 13--15 \\ 28049 Madrid \\ Spain}
\email{miguel.gonzalez@icmat.es}
\noindent
\thanks{
 \noindent
Partially supported by the Spanish Ministry of Science and
Innovation, through the ``Severo Ochoa Programme for Centres of Excellence in R\&D (CEX2019-000904-S)'' and grants  PID2019-109339GB-C31 and PID2022-141387NB-C21. The second author was supported by JAE Intro ICU grant JAEICU-21-ICMAT-01 for master's studies financed through ICMAT Severo Ochoa grant No. CEX2019-000904-S and by a fellowship from "la Caixa" Foundation (ID 100010434) with code LCF/BQ/DR23/12000030. This version of the article has been accepted for publication after peer review but is not the Version of Record and does not reflect post-acceptance improvements, or any corrections. The Version of Record is available online at: \href{https://dx.doi.org/10.1007/s00031-026-09962-2}{https://dx.doi.org/10.1007/s00031-026-09962-2}}

\subjclass[2020]{Primary 14H60; Secondary 57R57, 58D29}

\begin{abstract}
  Let $G$ be a complex semisimple Lie group and $\lieg$ its Lie algebra. In this paper, we study a special class of cyclic Higgs bundles constructed from a $\Z$-grading $\lieg = \bigoplus_{j=1-m}^{m-1}\lieg_j$ by using the natural representation $G_0 \to \GL(\lieg_1 \oplus \lieg_{1-m})$, where $G_0 \le G$ is the connected subgroup corresponding to $\lieg_0$. The resulting Higgs pairs include $G^\R$-Higgs bundles for $G^\R \le G$ a real form of Hermitian type (in the case $m=2$) and fixed points of the $\C^*$-action on $G$-Higgs bundles (in the case where the Higgs field vanishes along $\lieg_{1-m}$). In both of these situations a topological invariant with interesting properties, known as the Toledo invariant, has been defined and studied in the literature. This paper generalises its definition and properties to the case of arbitrary $(G_0,\lieg_1 \oplus \lieg_{1-m})$-Higgs pairs, which give rise to families of cyclic Higgs bundles. The results are applied to the example with $m=3$ that arises from the theory of quaternion-Kähler symmetric spaces.
\end{abstract}

\maketitle

\section{Introduction}

Let $G$ be a complex semisimple Lie group with Lie algebra $\lieg$. A $G$-Higgs bundle is a pair $(E,\varphi)$ where $E$ is a holomorphic principal $G$-bundle over a compact Riemann surface $X$ of genus $g \ge 2$, and $\varphi \in H^0(X, E(\lieg) \otimes K_X)$, where $K_X$ is the canonical line bundle of $X$ and $E(\lieg) := E \times_{\Ad} \lieg$ is the adjoint bundle. Suitable notions of stability give rise to the moduli space of $G$-Higgs bundles, $\mathcal M(G)$, which has been extensively studied and shown to exhibit a very rich geometry and many interesting properties. One of the major aspects is the existence of the \textit{nonabelian Hodge correspondence} \cite{corlette_flat_1988, donaldson_twisted_1987, hitchin_self-duality_1987, simpson_hodge_1996, simpson_higgs_1992}, via which $\mathcal M(G)$ is homeomorphic to the variety of $G$-characters of $\pi_1(X)$, whose elements are the (completely reducible) representations $\rho : \pi_1(X) \to G$.

This paper focuses on the study of the subvarieties of \textit{cyclic Higgs bundles} in $\mathcal M(G)$. These subvarieties are obtained by choosing a finite order automorphism $\theta \in \Aut_m(G)$, as well as a primitive $m$-th root of unity $\zeta_m \in \C^*$, and considering the fixed points of the action of the finite cyclic group $\Z/m\Z$ on $\mathcal M(G)$ generated by
\[(E,\varphi) \mapsto (\theta(E), \zeta_m \cdot \theta(\varphi)),\]
\noindent where $\theta$ also denotes the induced finite order element in $\Aut(\lieg)$.

Cyclic Higgs bundles were originally considered by Simpson \cite{simpson_katz_2006}, under the name of \textit{cyclotomic Higgs bundles}, in relation to the study of local systems. These were defined as fixed points for the action of $\Z/m\Z$ given by scaling the Higgs field by $\zeta_m$, which is included in our case of study by considering $\theta$ to be the identity (or any inner automorphism, by gauge equivalence). They have also been considered in \cite{baraglia_cyclic_2015}, where they are used to provide solutions for the field theoretic \textit{affine Toda equations}. Other appearances of cyclic Higgs bundles in the literature include \cite{collier_finite_2016, collier_holomorphic_2023, li_cyclic_2017, garcia-prada_vinberg_unpublished, labourie_cyclic_2017, li_noncompact_2020}.

Our approach to the study of cyclic Higgs bundles relies on their description given in \cite{garcia-prada_involutions_2019} (see also \cite{garcia-prada_vinberg_unpublished}) by means of the theory of representations associated with $\Z/m\Z$-gradings of Lie algebras. The automorphism $\theta$ induces such a grading
\[\lieg = \bigoplus_{j \in \Z/m\Z}\bar{\lieg}_j.\]
The fixed point subgroup $G^\theta \le G$ has $\bar{\lieg}_0$ as Lie algebra, and the adjoint action of $G$ restricts to a representation $G^\theta \to \GL(\bar{\lieg}_j)$ on each of the pieces of the grading. These representations $(G^\theta, \bar{\lieg}_j)$ are called \textit{Vinberg $\theta$-pairs} and were studied in depth in \cite{vinberg_graded}.

Cyclic Higgs bundles are then obtained from the image of the map $\mathcal M(G^\theta, \bar{\lieg}_1) \to \mathcal M(G)$, where $\mathcal M(G^\theta, \bar{\lieg}_1)$ is the moduli space of polystable $(G^\theta, \bar{\lieg}_1)$-Higgs pairs (i.e. pairs $(E,\varphi)$ comprised of a holomorphic $G^\theta$-bundle $E$ over $X$ and a section $\varphi \in H^0(X, E(\bar{\lieg}_1) \otimes K_X)$) and the map is given by extension of the structure group to $G$. In \cite{garcia-prada_involutions_2019} it is also shown that every stable and simple cyclic Higgs bundle is obtained in this way, by considering the images of these maps for the different automorphisms $\theta'$ in the same inner class as $\theta$, up to conjugation. 

The goal of the present work is to introduce a topological invariant for the moduli spaces $\mathcal M(G_0,\bar{\lieg}_1)$, where $G_0 \subseteq G^\theta$ is the connected component of the identity, and study its properties. For this we follow the approach of \cite{biquard_arakelov-milnor_2021}, where the theory of $\Z$-gradings of the Lie algebra $\lieg$ allows to introduce such an invariant in a different situation, which we now recall.

Given a $\Z$-grading $\lieg = \bigoplus_{i \in \mathbb Z}\lieg_i$, and denoting by $G_0 \le G$ the connected subgroup corresponding to the Lie subalgebra $\lieg_0$, we have as before that the adjoint representation restricts, giving $\rho : G_0 \to \GL(\lieg_i)$. For $i\neq 0$, the action has been studied by Vinberg (see e.g. \cite[Chapter X]{knapp}) and shown to possess a unique open orbit, which is dense. 

Via the previous representations, which we call \textit{Vinberg $\C^*$-pairs}, it is possible to consider $(G_0,\lieg_i)$-Higgs pairs $(E,\varphi)$, where $E$ is now a principal $G_0$-bundle over $X$ and $\varphi \in H^0(X,E(\lieg_i) \otimes K_X)$. Again, there is a moduli space of polystable pairs $\mathcal M(G_0, \lieg_i)$. These pairs provide \textit{Hodge bundles}, that is, fixed points of the natural $\C^*$-action on $\mathcal M(G)$ given by scaling the Higgs field \cite[Section 4.1]{biquard_arakelov-milnor_2021}.

In \cite{biquard_arakelov-milnor_2021}, it is shown that $(G_0, \lieg_1)$-Higgs pairs possess a topological invariant with interesting properties, known as the \textit{Toledo invariant}. It is defined by considering the \textit{Toledo character} $\chi_T : \lieg_0 \to \C$ given by $x \mapsto B^*(\gamma,\gamma)B(\zeta,x)$, where $B$ is an $\Ad$-invariant form on $\lieg$ (such as the Killing form), $\zeta \in \lieg_0$ is an element such that $\lieg_i = \ker(\ad(\zeta) - i\Id_{\lieg}) \subseteq \lieg$, and $B^*(\gamma,\gamma)$ is a normalisation term. The Toledo invariant $\tau(E,\varphi)$ is then the degree of $E$ with respect to $\chi_T$, appropriately defined. The work of \cite{biquard_arakelov-milnor_2021} shows a Milnor--Wood-type bound for $\tau$ in $\mathcal M(G_0,\lieg_i)$, known as \textit{Arakelov--Milnor inequality}, and exhibits a description of the locus attaining the bound.

In order to relate the theory of $\Z$-gradings to cyclic Higgs bundles, we will assume that there is a $\Z$-grading of the Lie algebra, of the form
\[\lieg = \lieg_{1-m} \oplus \dots \oplus \lieg_{m-1},\]
\noindent such that $\bar{\lieg}_0 = \lieg_0$ and $\bar{\lieg}_j = \lieg_j \oplus \lieg_{j-m}$ for $j \in \{1,\dots,m-1\}$. In this case, the group $G_0$ given by the $\Z$-grading is the same as the connected component of the identity $G_0 \subseteq G^\theta$, whence the identical notation. This assumption forces the automorphism $\theta \in \Aut_m(\lieg)$ to be inner, and we characterise which inner automorphisms fall under these conditions. For example, in type $A_n$, this property is satisfied by every inner automorphism. 

In this situation, the $(G_0,\bar{\lieg}_1)$-Higgs pairs $(E,\varphi)$ that we are interested in are related to the previously discussed $(G_0,\lieg_1)$-Higgs pairs $(E,\varphi^+)$, but now we allow the addition of a term $\varphi^- \in H^0(X,E(\lieg_{1-m}) \otimes K_X)$ to the Higgs field. The space $\lieg_{1-m}$ is special because, if the grading is induced by an $\liesl_2\mathbb C$-triple $(h,e,f)$ of $\lieg$ (i.e. if $\zeta = \ad(\frac{h}{2})$), then $\lieg_{1-m}$ is the sum of the lowest weight spaces for the given $\liesl_2\mathbb C$-representation. We prove that, due to this, the results of \cite{biquard_arakelov-milnor_2021} extend to the moduli space $\mathcal M(G_0,\bar{\lieg}_1)$, that is, there is a Toledo invariant satisfying the Arakelov--Milnor inequality, and the space of $(G_0, \bar{\lieg}_1)$-Higgs pairs attaining the bound can be described via a \textit{Cayley correspondence}.

This situation has already been extensively studied for $m=2$. Gradings $\lieg=\lieg_1 \oplus \lieg_{0} \oplus \lieg_{-1}$ arise from the complexification of Cartan decompositions $\lieg^\R = \lieh^\R \oplus \liem^\R$, where $\lieg^\R$ is the Lie algebra of a real form $G^\R \le G$ of Hermitian type (i.e. the associated symmetric space $G^\R/H^\R$ is Hermitian), $\lieh = \lieg_0$ and $\liem = \lieg_1 \oplus \lieg_{-1}$ (where $\lieg = \lieh \oplus \liem$ is the complexified decomposition). The inequality for the Toledo invariant and the Cayley correspondence in this case were established in \cite{bradlow_classicalhermitian_2006, bradlow_sostar_2015, bradlow_surface_2003, garcia-prada_symplectic_2013} (for classical real forms) and \cite{biquard_higgs_2017} (in general).

The case of $m=3$ is also connected to symmetric spaces, as it relates to those of quaternionic type \cite{bertram_quaternionic_2002,wallach_quaternionic_1996}, which exist for every complex simple Lie algebra $\lieg$. Indeed, any such algebra possesses a $\Z$-grading $\lieg = \lieg_{-2} \oplus \lieg_{-1} \oplus \lieg_0 \oplus \lieg_1 \oplus \lieg_2$ which is given by the eigenspaces of $\ad(T_\beta)$, where $T_\beta$ is a coroot vector for the highest root $\beta$. The resulting symmetric space for $(\lieg_{-2} \oplus \lieg_0 \oplus \lieg_2) \oplus (\lieg_{-1} \oplus \lieg_1)$ has the special property of being quaternion-Kähler, that is, having holonomy in $\Sp(n) \cdot \Sp(1) \subseteq \SO(4n)$. Conversely, this classifies all quaternion-Kähler symmetric spaces. Our results provide bounds for the Toledo invariant and a Cayley correspondence for the moduli space $\mathcal M(G_0, \lieg_1 \oplus \lieg_{-2})$.

This paper is structured as follows. In Section \ref{zgrad} we introduce the required theory of gradings of Lie algebras and Vinberg pairs. We also define the Toledo character and collect some of its properties. In Section \ref{higgstoledo} we define Higgs bundles and their notions of stability, we see how our setting relates to cyclic Higgs bundles, we detail the main ingredients of non-abelian Hodge theory in our context and we define the Toledo invariant for $\mathcal M(G_0,\lieg_1 \oplus \lieg_{1-m})$, proving its bound in Theorem \ref{amw}. In Section \ref{cayley} we study the locus of elements in $\mathcal M(G_0, \lieg_1 \oplus \lieg_{1-m})$ attaining the bound in the \textit{JM-regular} case (a technical requirement generalising the \textit{tube type} condition for Hermitian symmetric spaces). We relate this locus, via the Cayley correspondence of Theorem \ref{caycorr} and Proposition \ref{cayleysurj} to the moduli space of polystable $K_X^m$-twisted $(C,V)$-Higgs pairs, where $C \le G_0$ is a reductive subgroup and $V \subseteq \lieg_0$ is a vector subspace, naturally isomorphic to $\lieg_{1-m}$, on which $C$ acts. Finally, in Section \ref{quater}, we apply the results to the quaternionic $\Z$-grading of a complex simple Lie algebra $\lieg$ and show that the inequalities for the Toledo invariant and the Cayley correspondence can be improved in this situation.

\textbf{Acknowledgements.} We would like to thank Brian Collier and Alastair King for very useful discussions on aspects of this project. 

\section{Gradings of Lie algebras and the Toledo character}\label{zgrad}

For this section we let $\lieg$ be a finite-dimensional semisimple complex Lie algebra and $G$ a Lie group with Lie algebra $\lieg$.

\subsection{$\Z$-gradings of semisimple Lie algebras}\label{zgradings}
\begin{definition}
	A \textbf{$\Z$-grading} of $\lieg$ is a decomposition as a direct sum of vector subspaces
	\[\lieg = \bigoplus_{j \in \Z}\lieg_j\]
	\noindent such that $[\lieg_j, \lieg_k] \subseteq \lieg_{j+k}$.  
\end{definition}

The linear map $D: \lieg \to \lieg$ given by $D|_{\lieg_k} = k\Id_{\lieg_k}$ is a Lie algebra derivation. Semisimplicity of $\lieg$ implies that $D = \ad \zeta$ for some $\zeta \in \lieg$ which must then belong to $\lieg_0$. Such an element is called \textbf{grading element}.

As $\lieg$ is finite-dimensional, all but finitely many terms in the decomposition must be zero. Given a $\Z$-grading of $\lieg$, we set $m \in \Z_{>0}$ to be the smallest positive integer such that $|j| \ge m$ implies $\lieg_j = 0$. In other words, we consider $\Z$-gradings of the form
\[\lieg = \lieg_{1-m} \oplus \dots \oplus \lieg_{m-1}.\]  
\noindent such that $\lieg_{m-1} \neq 0$.

\begin{example}[Symmetric pairs of Hermitian type]\label{realhermitian}
	The case $m=2$ is related to the theory of symmetric spaces of Hermitian type. Indeed, if $G^\R/H^\R$ is such a symmetric space, with $G^\R \le G$ a real form with Lie algebra $\lieg^\R$ and $H^\R \le G^\R$ its maximal compact subgroup with Lie algebra $\lieh^\R$, we have the Cartan decomposition $\lieg^\R = \lieh^\R \oplus \liem^\R$. This gives a complexified decomposition $\lieg = \lieh \oplus \liem$, and the Hermitian structure on $G^\R/H^\R$ (whose tangent space at the neutral element is identified with $\liem^\R$) decomposes $\liem = \liem^+ \oplus \liem^-$ in $\pm i$-eigenspaces. We then have $\lieg_0 = \lieh$, $\lieg_1 = \liem^+$, $\lieg_{-1} = \liem^-$. For $\lieg$ simple, the possibilities for the real form $\lieg^\R$ are $\liesu(p,q)$, $\liesp(2n,\R)$, $\lieso^*(2n)$, $\lieso(2,n)$, $\liee_6(-14)$ and $\liee_7(-25)$. See \cite[Section 2.1]{biquard_higgs_2017} for further details.
\end{example}

\begin{example}[Symmetric pairs of quaternionic type]\label{quaternionic}
	The case $m=3$ relates to symmetric spaces of quaternionic type, which are symmetric spaces $M = G^\R/H^\R$ having a distribution of \textit{twisted} quaternion algebras (see \cite[Section 14D]{besse2007einstein}). For each complex simple Lie group there is one such space, and this results in the fact that each simple complex Lie algebra $\lieg$ has a distinguished \textit{quaternionic} $\Z$-grading of the form $\lieg_{-2} \oplus \lieg_{-1} \oplus \lieg_0 \oplus \lieg_1 \oplus \lieg_2$. This example will be studied in more detail in Section \ref{quater}.
\end{example}

The relation $[\lieg_0,\lieg_0] \subseteq \lieg_0$ implies that $\lieg_0$ is a Lie subalgebra, so that there is a corresponding connected subgroup $G_0 \le G$, which is reductive. Moreover, $[\lieg_0, \lieg_j] \subseteq \lieg_j$ implies that the adjoint action of $G$ in $\lieg$ restricts to a representation $G_0 \to \GL(\lieg_j)$ for each $j$. We recall the following definition:

\begin{definition}
	Let $\rho : G \to \GL(V)$ be a holomorphic representation with an open $G$-orbit $\Omega \subseteq V$. The vector space $V$ is called a \textbf{prehomogeneous vector space} for $G$, and the pair $(G,V)$ is also called a \textit{prehomogeneous vector space}.
\end{definition}

In this case the open orbit is dense and unique \cite[Proposition 10.1]{knapp}. The simplest example is given by the standard action of $\GL_n(\C)$ on $\C^n$, where $\Omega = \C^n \backslash \{0\}$.

\begin{definition}
	Let $(H,W)$ and $(G,V)$ be two prehomogeneous vector spaces. If $H \subseteq G$ is a subgroup, $W \subseteq V$ is a vector subspace, and the action of $H$ in $W$ is obtained by restricting the action of $G$ on $V$, we say that $(H,W)$ is a \textbf{prehomogeneous vector subspace} of $(G,V)$.
\end{definition} 

Regarding the representations coming from $\Z$-gradings, we have the following result of Vinberg (see \cite[Theorem 10.19 and Corollary 10.22]{knapp}):

\begin{proposition}\label{finorbits}
	For $j \neq 0$, the action of $G_0$ on $\lieg_j$ has finitely many orbits. In particular, $(G_0, \lieg_j)$ is a prehomogeneous vector space.
\end{proposition}

This motivates the following definition.

\begin{definition}
	Pairs $(G_0,\lieg_j)$ for $j \neq 0$ are called \textbf{Vinberg $\C^*$-pairs}.
\end{definition}

\begin{remark}\label{regrading}
	We may assume that $j=1$, since otherwise we can consider the $\Z$-graded subalgebra $\lieg' := \bigoplus_{k\in \mathbb Z}\lieg_{kj}$ whose zero-th piece is still $\lieg_0$ and whose first piece is $\lieg_j$.
\end{remark}

\begin{example}[Representations of type $A_m$ quivers]\label{linear_quiver}
	Let $V$ be an $n$-dimensional complex vector space and consider $G = \SL_n(V)$. We fix $m \in \Z_{>0}$ and a direct sum decomposition $V = V_0 \oplus \dots \oplus V_{m-1}$, where $\dim V_i = d_i$ (so that $\sum_i d_i = n$). Then, the Lie algebra $\lieg = \lie{sl}(V)$ consists of traceless endomorphisms of $V$, which we grade by
	$$\lieg_k := \left(\bigoplus_{j = 0}^{m-1}\Hom(V_j, V_{j+k})\right)_0,$$
	
	\noindent where $V_j=0$ for $j \notin \{0,\dots,m-1\}$ and the subscript zero means taking the subspace of traceless endomorphisms, and is only meaningful for $\lieg_0$. This is alternatively defined by the grading element $\zeta \in \lieg_0 = (\bigoplus_{j = 0}^{m-1}\End(V_j))_0$ given by $\zeta|_{V_j} = (j-\alpha)\Id|_{V_j}$, where $\alpha$ is a fixed constant (depending on the $d_i$) so that the obtained map is traceless. Note that $m$ is chosen so that $\lieg_j = 0$ for $|j| \ge m$, as explained before.
	
	The associated Vinberg $\C^*$-pair is $(G_0,\lieg_1)$, where
	\[G_0 = S(\GL\nolimits_{d_0}(\mathbb C) \times \dots \times \GL\nolimits_{d_{m-1}}(\mathbb C)),\]
	\[\lieg_1 = \bigoplus_{j = 0}^{m-1}\Hom(V_j, V_{j+1}).\]
	
	The space $\lieg_1$ consists of representations of a quiver with $m$ vertices and arrows $i \mapsto i+1$ for $i \in \{0,\dots,m-2\}$ (a \emph{linear quiver}, or \emph{type $A_{m}$ quiver}) where we put $V_i$ on the $i$-th vertex: 
	\[\begin{tikzcd}
		{V_0} && {V_1} && \dots && {V_{m-1}}
		\arrow["{f_0}", from=1-1, to=1-3]
		\arrow["{f_1}", from=1-3, to=1-5]
		\arrow["{f_{m-2}}", from=1-5, to=1-7]
	\end{tikzcd}.\]	
	
	The orbits of the action of $G_0$ on such representations have been studied and are explained, for example, in \cite[Section 2]{abeasis_geometry_1981} (more precisely, they study the action of $\prod \GL_{d_i}(\mathbb C)$, but it can be checked that it has the same orbits as that of $G_0$). For an element $f \in \lie g_1$ as in the previous diagram, and $0 \le i < j \le m-1$, we denote $r_{ij} := \rank (f_{j-1} \circ \dots \circ f_{i})$ the rank of the consecutive composition, a linear map from $V_i$ to $V_j$. Then, each feasible choice of ranks $(r_{ij})_{0 \le i < j \le m-1}$, with $0 \le r_{ij} \le \min\{d_i,\dots,d_j\}$, determines a unique orbit. The open orbit is the one where ranks are maximal, that is, $r_{ij} = \min\{d_i,\dots,d_j\}$ for all $i,j$.
\end{example}

We now define a special class of Vinberg $\C^*$-pairs, called Jacobson--Morozov-regular, following \cite[Section 2.4]{biquard_arakelov-milnor_2021}.

Let $(G_0, \lieg_1)$ be a Vinberg $\C^*$-pair and $e \in \Omega \subseteq \lieg_1$ an element of the open orbit. Since $\ad(e)$ maps $\lieg_j$ to $\lieg_{j+1}$, we have $\ad^{2m-1}(e) \equiv 0$. By the Jacobson--Morozov theorem, we can complete $e$ to an $\liesl_2$-triple $(h,e,f)$, where we can take $h \in \lieg_0$ and $f \in \lieg_{-1}$ by \cite[Lemma 10.18]{knapp}.

\begin{definition}
	A Vinberg $\C^*$-pair $(G_0,\lieg_1)$ is \textbf{Jacobson--Morozov-regular} or \textbf{JM-regular} if there exists an $\lie{sl}_2$-triple $(h,e,f)$ with $e \in \Omega$ such that $\zeta := \frac{h}{2}$ is a grading element for the $\Z$-grading of $\lieg$. 
\end{definition}

The term \textit{regular} in the name refers to the fact that, in this case, the prehomogeneous vector space $(G_0,\lieg_1)$ is regular, in the sense that the stabiliser $G^e$ is reductive \cite[Corollary 2.7]{biquard_arakelov-milnor_2021}. Moreover, it is a consequence of Malcev--Kostant theorem \cite[Theorem 10.10]{knapp} that every element $e \in \Omega$ can be included into such a triple $(2\zeta,e,f)$. Also, if $e \in \Omega$ is chosen and $(h,e,f)$ is any $\lie{sl}_2$-triple with $h \in \lie g_0$ (not necessarily the one with $h=2\zeta$), then $h$ is conjugate to $2\zeta$ under the adjoint action of $G_0$, by \cite[Proposition 2.19]{biquard_arakelov-milnor_2021}.

Even if $(G_0,\lieg_1)$ is not JM-regular, we may construct an associated JM-regular prehomogeneous vector subspace. We have that $\ad(h)$ diagonalizes with integer eigenvalues, so there is another $\Z$-grading $\lieg = \bigoplus_{k \in \mathbb Z} \tilde{\lieg}_k$ given by the eigenspaces. Define
$$\hat{\lieg}_k := \lieg_k \cap \tilde{\lieg}_{2k},$$
\noindent yielding the subalgebra $\hat{\lieg} = \bigoplus_{k \in \Z} \hat{\lieg}_k$. Denote by $(\hat{G}_0, \hat{\lieg}_1)$ the corresponding Vinberg $\C^*$-pair.

Note that since $[h,\zeta] = 0$ we have $h,\zeta \in \hat{\mathfrak g}_0$, and since $\frac{1}{2}[h,e] = [\zeta, e]= e$ we also have $e \in \hat{\mathfrak{g}}_1$. Also, $\hat{\mathfrak g}$ is precisely the subalgebra of elements where $\zeta$ and $\frac{h}{2}$ coincide, that is, the stabiliser of $s := \zeta - \frac{h}{2}$ in $\hat{\mathfrak g}$. If $\hat{G}_0 \subseteq G_0$ is the centraliser of $h$ in $G_0$ (equivalently, the centraliser of $s$), then $(\hat{G}_0,\hat{\mathfrak g}_1)$ is a prehomogeneous vector subspace of $(G_0,\mathfrak g_1)$ which is JM-regular and $e \in \hat{\Omega} \subseteq \hat{\mathfrak g}_1$, the latter by Malcev--Kostant theorem \cite[Theorem 10.10]{knapp}.

\begin{definition}\label{maxphv}
	Given $e \in \lie g_1$, the prehomogeneous vector subspace $(\hat{G}_0, \hat{\lie g}_1)$ of $(G_0,\mathfrak g_1)$ constructed above is called a \textbf{maximal JM-regular prehomogeneous vector subspace} for $e$.
\end{definition}

\begin{remark}
	The term \textit{maximal} refers to the fact that, among the JM-regular prehomogeneous vector subspaces of $(G_0,\lieg_1)$ which can be constructed from a graded Lie subalgebra $\hat{\lieg} = \bigoplus_{i \in \Z} \hat{\lieg}_i \subseteq \lieg$ (preserving degree, i.e. $\hat{\lieg}_i \subseteq \lieg_i$) and such that $e$ belongs to the open orbit of $\hat{G}_0$ in $\hat{\lieg}_1$, the one constructed above is maximal. Indeed, the JM-regularity condition implies that $\frac{h}{2}$ must be a grading element for $\hat{\lieg} = \bigoplus_{i \in Z} \hat{\lieg}_i$, and the fact that the grading for $\hat{\lieg}$ is contained in that of $\lieg$ implies that $\ad(\zeta)|_{\hat{\lieg}} = \ad(\frac{h}{2})|_{\hat{\lieg}}$. Thus, $\hat{\lieg}$ must be centralised by $s = \zeta - \frac{h}{2}$. This shows that the maximal choice corresponds to the centraliser itself, leading to the previous construction. 
\end{remark}

\begin{example}
	In the $\Z$-grading from Example \ref{linear_quiver} corresponding to linear quiver representations, we can compute from a given partition of $n$ (giving the dimensions of each piece) whether the resulting pair is JM-regular. An element of the open orbit $\Omega \in \mathfrak g_1$ is given by $e = (e_0,\dots, e_{m-1})$ where each $e_j : V_j \to V_{j+1}$ is of maximal rank. For example, we can choose a basis $B_j = \{v^j_1, \dots, v^j_{d_j}\}$ for each $V_j$ and let $e_j$ be the one with matrix $\begin{pmatrix}
		\Id_{d_{j+1}} & 0
	\end{pmatrix}$ or $\begin{pmatrix}
		\Id_{d_{j}} \\
		0
	\end{pmatrix}$, depending on whether $d_j \le d_{j+1}$. In order to complete $e$ to an $\liesl_2$-triple, let $\{u_0,\dots, u_{s-1}\}$ be a basis for a Jordan block (that is, $e(u_k) = u_{k+1}$ for $k < s-1$, $e(u_s) = 0$, and $\Ima(e) \cap \gen{u_0} = 0$), and define a traceless linear map $h$ by $h(u_j) = -(s-1-2j)u_j$. Doing this on each Jordan block gives $h \in \mathfrak g$ with $[h,e] = 2e$ and the remaining $f$ is defined as $f(u_j) = j(s-j)u_{j-1}$. Moreover, notice that by definition of $e$, we can partition the basis $\bigcup_{j = 0}^{m-1}B_j$ into Jordan blocks. This means that the resulting $h$ given by the previous method is actually in $\mathfrak g_0$. As $\zeta$ is fixed by the action of $G_0$, the JM-regular cases are precisely whenever the $h$ constructed above (depending only on the dimensions $d_j$) equals $2\zeta$. An example of JM-regular case is given by partitions satisfying $d_j = d_{m-j-1}$ for $j \in \{0,\dots,c\}$ and $d_0 \le d_1 \le \dots \le d_c$, where $c = \lfloor\frac{m-1}{2}\rfloor$. 
\end{example}

\subsection{Toledo character}\label{toledochar}

Let $\lieg = \bigoplus_{j=1-m}^{m-1}\lieg_j$ be a $\Z$-graded semisimple complex Lie algebra with $\lieg_{m-1} \neq 0$. Let $B$ be an invariant bilinear form on $\lieg$ (such as the Killing form) and let $\liet \subseteq \lieg$ be a Cartan subalgebra. We denote by $B^*$ the dual form on $\liet^*$. After choosing roots $\Delta \subseteq \liet^*$ for $\lieg$, we may assume (up to inner automorphism of $\lieg$) that each root space is contained in some $\lieg_j$ of the $\Z$-grading \cite[Section 2.3]{biquard_arakelov-milnor_2021}. Let $\gamma \in \lie t$ be the longest root such that $\lie g_{\gamma} \subseteq \lie g_1$. We have the following definition from \cite[Section 3.1]{biquard_arakelov-milnor_2021}.

\begin{definition}
	The \textbf{Toledo character} $\chi_T : \lieg_0 \to \mathbb C$ is defined by
	$$\chi_T(x) = B(\zeta, x)B^*(\gamma, \gamma).$$
\end{definition} 

This is indeed a character, as $B(\zeta, [x,y]) = -B([x,\zeta],y) = B(0,y) = 0$. The constant factor $B^*(\gamma, \gamma)$ ensures that the definition does not depend on the choice of invariant bilinear form. 

Using the Toledo character, we can associate a number to each element of $\lie g_1$ depending on its orbit.

\begin{definition}
	Let $e \in \lie g_1$ and $(h,e,f)$ an $\lie{sl}_2$-triple with $h \in \lie g_0$. We define the \textbf{Toledo rank of $e$} by
	$$\rank\nolimits_T(e) := \frac{1}{2}\chi_T(h).$$ 
\end{definition}

This number is indeed independent of the representative of a given $G_0$-orbit: if $e,e' \in \mathfrak g_1$ belong to the same orbit and $h,h'$ are the corresponding elements in $\mathfrak g_0$, by \cite[Proposition 2.19]{biquard_arakelov-milnor_2021} we get that there exists $g \in G_0$ such that $\Ad_g h = h'$. Then, by $\Ad$-invariance of $B$ as well as the fact that $\Ad_g\zeta = \zeta$, we have $\chi_T(h) = B(\zeta, h)B^*(\gamma,\gamma) = B(\Ad_g\zeta, \Ad_g h)B^*(\gamma,\gamma) = \chi_T(h')$. Moreover, by \cite[Proposition 3.16]{biquard_arakelov-milnor_2021}, if $e' \in \Omega$ is an element of the open orbit, we have
$$0 \le \rank\nolimits_T(e) \le \rank\nolimits_T(e'),$$
\noindent with the second inequality becoming an equality if and only if $e \in \Omega$. In other words, the maximum value of the rank is given precisely by elements of the open orbit.

\begin{definition}
	We define the \textbf{Toledo rank of $(G_0,\mathfrak g_1)$} to be $\rank_T(G_0,\mathfrak g_1) := \rank_T(e)$ for any $e \in \Omega$.
\end{definition}

The Toledo rank is related to the sectional curvature of certain metric on a \textit{period domain}, a fibration over a Riemannian symmetric space, naturally associated to $(G_0,\lieg_1)$ \cite[Section 3.3]{biquard_arakelov-milnor_2021}.

We conclude with a definition and a result that will be needed in a future proof.

\begin{definition}
	Let $\rho : G \to \GL(V)$ be a prehomogeneous vector space and $\chi : G \to \mathbb C^*$ a character. A non-constant rational function $F: V \to \mathbb C$ is called a \textbf{relative invariant} for $\chi$ if, for all $g \in G$ and $v \in V$, we have
	$$F(\rho(g) \cdot v) = \chi(g)F(v).$$ 
\end{definition}

\begin{lemma}\label{relinv}
	Suppose that $(G_0, \mathfrak g_1)$ is a JM-regular Vinberg $\C^*$-pair. Then, there exists $q \in \mathbb Z_{>0}$ such that $q\chi_T$ lifts to a character $\tilde{\chi}_T : G_0 \to \mathbb C^*$ having a relative invariant $F: \lieg_1 \to \C$ of degree $q\rank_T(G_0,\mathfrak g_1)$.
\end{lemma}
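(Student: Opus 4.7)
The strategy is classical Sato--Kimura theory of relative invariants for the regular prehomogeneous vector space $(G_0, \lieg_1)$. There are three main steps: integrate some positive multiple of the Lie algebra character $\chi_T$ to an algebraic character $\tilde{\chi}_T$ of $G_0$; verify that this character is trivial on the stabiliser of an element of the open orbit; then build $F$ from the orbit map and read off its degree from the $\C^*$-action generated by $\zeta$.

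For the first step, $\chi_T$ vanishes on $[\lieg_0,\lieg_0]$ by invariance of $B$, so it factors through the centre of $\lieg_0$. Since $G_0$ is connected reductive, the exponential restricted to the central torus has lattice kernel, and some positive integer multiple $q_1\chi_T$ takes this lattice into $2\pi i \Z$, yielding an algebraic lift $\tilde{\chi}_T : G_0 \to \C^*$ of $q_1\chi_T$. For the second step, take $e \in \Omega$ and use JM-regularity to complete $e$ to an $\liesl_2$-triple $(h,e,f)$ with $\zeta = h/2$. For $x \in \lieg_0^e$, the $\Ad$-invariance of $B$ yields
\[
B(\zeta, x) \;=\; \tfrac{1}{2}B([e,f], x) \;=\; -\tfrac{1}{2}B(f, [e,x]) \;=\; 0,
\]
so $\chi_T$ vanishes on $\lieg_0^e$, and $\tilde{\chi}_T$ is trivial on the identity component $(G_0^e)^0$. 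Since $G_0^e$ is reductive by JM-regularity and hence has finitely many components, absorbing the index of the identity component into $q_1$ gives $q \in \Z_{>0}$ and a character $\tilde{\chi}_T$ trivial on all of $G_0^e$.

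For the third step, define $F : \Omega \to \C^*$ by $F(g \cdot e) := \tilde{\chi}_T(g)$, well-defined by the previous paragraph and algebraic on $\Omega \cong G_0/G_0^e$. Standard extension arguments from Sato--Kimura theory produce a rational function on $\lieg_1$ satisfying the relative invariance $F(g\cdot v) = \tilde{\chi}_T(g)F(v)$. Its degree is detected by the one-parameter subgroup $\{\exp(t\zeta)\} \subseteq G_0$, which acts on $\lieg_1$ by scalar multiplication $v \mapsto e^t v$ because $\lieg_1$ is the $1$-eigenspace of $\ad\zeta$. The identity
\[
F(sv) \;=\; \tilde{\chi}_T(\exp(t\zeta)) F(v) \;=\; e^{tq\chi_T(\zeta)} F(v) \;=\; s^{q\chi_T(\zeta)} F(v), \qquad s = e^t,
\]
together with $\chi_T(\zeta) = \tfrac{1}{2}\chi_T(h) = \rank_T(G_0,\lieg_1)$ (using $h = 2\zeta$), gives degree $q\rank_T(G_0,\lieg_1)$.

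The delicate point is step three: extending the naturally constructed function on the homogeneous space $\Omega$ to a bona fide rational function on all of $\lieg_1$ rests on the prehomogeneous vector space being regular, which is ensured here by the reductivity of $G_0^e$ granted by JM-regularity.
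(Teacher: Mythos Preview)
Your argument is correct and in fact supplies what the paper omits: the paper's ``proof'' of this lemma is a bare citation to \cite[Proposition~2.8]{biquard_arakelov-milnor_2021}, so there is nothing to compare against beyond noting that your Sato--Kimura-style construction is the standard route and almost certainly what the cited reference does.

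One small correction to your commentary: the point you flag as ``delicate'' in step three is not. A regular function on the open dense $\Omega$ of the affine space $\lieg_1$ is automatically a rational function on $\lieg_1$; no special hypothesis is needed for that extension, and the lemma (via the paper's definition of relative invariant) only asks for a rational function. The genuine uses of JM-regularity are exactly where you invoked $\zeta = h/2$: first in step two, to get $B(\zeta,x) = \tfrac{1}{2}B([e,f],x) = 0$ for $x \in \lieg_0^e$ (this fails without $\zeta = h/2$, since in general $\zeta - h/2$ need not be $B$-orthogonal to $\lieg_0^e$), and second in the degree identification $\chi_T(\zeta) = \tfrac{1}{2}\chi_T(h) = \rank_T(G_0,\lieg_1)$. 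Incidentally, the finiteness of $\pi_0(G_0^e)$ that you use in step two holds for any algebraic group and does not itself require reductivity of the stabiliser.
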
   

Proof. \cite[Proposition 2.8]{biquard_arakelov-milnor_2021} \qed

\subsection{$\Z/m\Z$-gradings of semisimple Lie algebras}\label{zmgradings}

Now we focus on $\Z/m\Z$-gradings on semisimple Lie algebras.

\begin{definition}
	Let $m \in \mathbb Z_{>0}$ and let $\mathfrak g$ be a semisimple complex Lie algebra. A \textbf{$\mathbb Z/m\mathbb Z$}-grading of $\mathfrak g$ is a decomposition as a direct sum of vector subspaces	\[\mathfrak g = \bigoplus_{j \in \mathbb Z/m\mathbb Z} \bar{\lieg}_j,\]
	\noindent such that $[\bar{\lieg}_j, \bar{\lieg}_k] \subseteq \bar{\lieg}_{j+k}$. 
\end{definition}

We will reserve the notation $\bar{\lieg}_j$ for each of the graded pieces on a $\Z/m\Z$-graded Lie algebra $\lieg$, in order to avoid confusion with the pieces of a $\Z$-grading.

\begin{proposition}\label{zmgradingmap}
	There is a correspondence between $\mathbb Z/m\mathbb Z$-gradings of $\mathfrak g$ and group homomorphisms $\gamma: \mu_m \to \aut(\mathfrak g)$, where $\mu_m = \{z \in \mathbb C^* : z^m = 1\}$ is the subgroup of $m$-th roots of unity. These homomorphisms are also in correspondence with order $m$ automorphisms $\theta \in \aut_m(\mathfrak g)$.
\end{proposition}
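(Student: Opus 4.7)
The proof proceeds in two stages, each establishing one of the asserted bijections.

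For the first correspondence, between $\Z/m\Z$-gradings and homomorphisms $\gamma : \mu_m \to \aut(\lieg)$: starting from $\gamma$, composition with the forgetful map $\aut(\lieg) \to \GL(\lieg)$ gives a finite-dimensional complex representation of the finite abelian group $\mu_m$ on $\lieg$. Such a representation is completely reducible and decomposes canonically into weight spaces for the characters of $\mu_m$. Since $\Hom(\mu_m, \C^*)$ is canonically identified with $\Z/m\Z$ by sending $j$ to the character $z \mapsto z^j$, we obtain a vector space decomposition $\lieg = \bigoplus_{j \in \Z/m\Z} \bar{\lieg}_j$ with $\bar{\lieg}_j := \{x \in \lieg \st \gamma(z) x = z^j x \text{ for all } z \in \mu_m\}$. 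The bracket relation $[\bar{\lieg}_j, \bar{\lieg}_k] \subseteq \bar{\lieg}_{j+k}$ then follows from each $\gamma(z)$ being a Lie algebra automorphism: for $x \in \bar{\lieg}_j$ and $y \in \bar{\lieg}_k$, $\gamma(z)[x,y] = [\gamma(z)x, \gamma(z)y] = z^{j+k}[x,y]$.

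Conversely, given a $\Z/m\Z$-grading $\lieg = \bigoplus_j \bar{\lieg}_j$, I would define $\gamma(z) x := z^j x$ for $x \in \bar{\lieg}_j$ and extend linearly to all of $\lieg$. The grading condition immediately yields that each $\gamma(z)$ preserves the bracket, hence lies in $\aut(\lieg)$, and multiplicativity in $z$ ensures that $\gamma$ is a group homomorphism. It is straightforward to check that the two constructions are mutually inverse.

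For the second correspondence, between homomorphisms $\mu_m \to \aut(\lieg)$ and elements of $\aut_m(\lieg)$: fix the primitive $m$-th root of unity $\zeta_m := e^{2\pi i/m}$, so that $\mu_m = \langle \zeta_m \rangle$ is cyclic of order $m$. Any homomorphism $\gamma$ is determined by the image $\theta := \gamma(\zeta_m)$, which satisfies $\theta^m = \gamma(\zeta_m^m) = \gamma(1) = \Id$ and hence lies in $\aut_m(\lieg)$. Conversely, any $\theta \in \aut_m(\lieg)$ defines a homomorphism by $\gamma(\zeta_m^k) := \theta^k$, well-defined precisely because $\theta^m = \Id$; these two assignments are evidently inverse to each other.

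There is no substantial obstacle here: the argument is the standard character-theoretic decomposition for representations of cyclic groups. The only point worth noting is that the first correspondence is intrinsic, relying on the canonical identification $\Hom(\mu_m, \C^*) \cong \Z/m\Z$, whereas the second depends on the choice of a primitive root $\zeta_m$. This explains why the natural formulation of the data is in terms of $\mu_m$ rather than directly in terms of automorphisms.
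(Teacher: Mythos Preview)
Your proof is correct and follows essentially the same approach as the paper: both directions of the grading/automorphism correspondence are established via eigenspace decomposition and the definition $\theta|_{\bar{\lieg}_j} = \zeta_m^j\Id$, with the bracket compatibility checked identically, and the final correspondence with $\aut_m(\lieg)$ reduced to cyclicity of $\mu_m$. Your additional remark distinguishing the canonical first correspondence from the $\zeta_m$-dependent second one is a nice clarification not present in the paper's version.
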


\begin{proof}
	Fix $\zeta \in \mathbb C^*$ a primitive $m$-th root of unity. From a $\mathbb Z/m \mathbb Z$-grading $\mathfrak g = \bigoplus_{j \in \mathbb Z/m\mathbb Z} \bar{\lieg}_j$, we obtain an order $m$ automorphism $\theta$ by the rule $\theta|_{\bar{\lieg}_j} \equiv \zeta^j\Id_{\bar{\lieg}_j}$. This is well defined and of order $m$, both because $\zeta^m = 1$. It is compatible with the bracket: if $X \in \bar{\lieg}_j$ and $Y \in \bar{\lieg}_k$, we have $\theta[X,Y] = \zeta^{j+k}[X,Y] = [\zeta^jX, \zeta^kY] = [\theta X, \theta Y]$.
	
	Conversely, given $\theta \in \aut_m(\mathfrak g)$, we obtain a $\mathbb Z/m\mathbb Z$-grading by taking the eigenspace decomposition, that is, setting $\bar{\lieg}_j = \{X \in \mathfrak g : \theta X = \zeta^j X\}$. The fact that it is a $\mathbb Z/m\mathbb Z$-grading comes from the compatibility of $\theta$ with the Lie bracket. 
	
	The last assertion is a consequence of $\mu_m$ being the cyclic group of order $m$. 
\end{proof}

Exactly as in the case of $\Z$-gradings, we have that $\bar{\lieg}_0 \subseteq \lieg$ is a Lie subalgebra and its corresponding connected subgroup $G_0 \le G$, which is reductive, has induced representations $\rho : G_0 \to \GL(\bar{\lieg}_j)$. These representations were studied by Vinberg \cite{vinberg_graded}.

\begin{definition}
	The pairs $(G_0,\bar{\lieg}_j)$ are called \textbf{Vinberg $\theta$-pairs}.
\end{definition}

\begin{remark}
	As in Remark \ref{regrading}, we may assume that $j=1$ by considering the subalgebra $\lieg' := \bigoplus_{k \in \Z/m'\Z} \bar{\lieg}_{kj}$, which is a graded subalgebra for $m' := \frac{m}{(m,k)}$ whose first piece is $\bar{\lieg}_j$.
\end{remark}

\begin{remark}
	In the case that the automorphism $\theta \in \aut_m(\lieg)$ giving the grading (cf. Proposition \ref{zmgradingmap}) comes from an automorphism of the group $\theta \in \Aut_m(G)$, we also get representations of any closed subgroup of $G_\theta := N_G(G^\theta)$ (here $G^\theta \le G$ is the subgroup of fixed points) containing $G_0$, since they also have $\bar{\lieg}_0$ as Lie algebra. This includes the subgroup $G^\theta$ itself. These groups may not be connected. In the case where $G$ is simply connected, we have $G^\theta = G_0$.
\end{remark}

We now list some examples of Vinberg $\theta$-pairs.

\begin{example}[Adjoint representation]\label{cyclicm1}
	The case of $m=1$ corresponds to the trivial $\Z/\Z$-grading $\lieg = \bar{\lieg}_0$ given by $\theta = \Id_\lieg \in \Aut_1(\lieg)$. In this case the Vinberg pair is $(G,\lieg)$, that is, the adjoint representation.
\end{example}

\begin{example}[Symmetric pairs]\label{cyclicm2}
	The case $m=2$ corresponds to symmetric pairs (see also Example \ref{realhermitian}). Recall that if $G^\R/H^\R$ is a symmetric space, we get a complexified Cartan decomposition $\lieg = \lieh \oplus \liem$. This decomposition is a $\Z/2\Z$-grading of $\lieg$ with $\bar{\lieg}_0 := \lieh$ and $\bar{\lieg}_1 := \liem$.
\end{example}

\begin{example}[Cyclic quiver representations]\label{cyclicquiver}
	As in the linear quiver representation case of Example \ref{linear_quiver}, we take $G = \SL_n(\mathbb C)$, seen as transformations of some $n$-dimensional vector space $V$. We once again fix a direct sum decomposition in $m$ pieces, $V = V_0 \oplus \dots \oplus V_{m-1}$, where $\dim V_i = d_i$. Then, we have an order $m$ automorphism $\theta \in \aut_m(G)$ defined by $\theta(g) = I_{d_0,\dots,d_{m-1}}\cdot g \cdot I_{d_0,\dots,d_{m-1}}^{-1}$, where the operator $I_{d_0,\dots,d_{m-1}} \in \GL_n(\mathbb C)$ is defined by $I_{d_0,\dots,d_{m-1}}|_{V_j} \equiv \zeta_m^j\Id_{V_j}$ for $\zeta_m \in \mathbb C^*$ a primitive $m$-th root of unity. Then, the Lie algebra $\mathfrak g = \mathfrak{sl}_n\mathbb C$ gets a $\mathbb Z/m\mathbb Z$ grading which is
	$$\bar{\lieg}_k := \left(\bigoplus_{j \in \mathbb Z/m\mathbb Z}\Hom(V_j, V_{j+k})\right)_0,$$
	
	\noindent where the subscript zero means taking the subspace of traceless endomorphisms, and again is only meaningful for $\bar{\lieg}_0$.
	
	The associated Vinberg $\theta$-pair is $(G_0,\bar{\lieg}_1)$, where
	$$G_0 = S(\GL\nolimits_{d_0}(\mathbb C) \times \dots \times \GL\nolimits_{d_{m-1}}(\mathbb C)),$$
	$$\bar{\lieg}_1 = \bigoplus_{j \in \mathbb Z/m\mathbb Z}\Hom(V_j, V_{j+1}).$$
	
	The space $\bar{\lieg}_1$ consists of representations of a quiver with $m$ vertices and arrows $i \mapsto i+1$ for $i \in \mathbb Z/m\mathbb Z$ (a \emph{cyclic quiver}) where we put $V_i$ on the $i$-th vertex: 
	\[\begin{tikzcd}
		{V_0} && {V_1} && \dots && {V_{m-1}}
		\arrow["{f_0}", from=1-1, to=1-3]
		\arrow["{f_1}", from=1-3, to=1-5]
		\arrow["{f_{m-2}}", from=1-5, to=1-7]
		\arrow["{f_{m-1}}", curve={height=-24pt}, from=1-7, to=1-1]
	\end{tikzcd}.\]	
\end{example}

\subsection{$\Z/m\Z$-gradings associated to $\Z$-gradings}\label{bothgradings} In this subsection we explore the relation between $\Z$-gradings and $\Z/m\Z$-gradings.

Given any $\Z$-grading $\lieg = \bigoplus_{j \in \Z} \lieg_j$, it induces an \textbf{associated $\Z/m\Z$-grading} by projecting the indices:
\[\bar{\lieg}_j := \bigoplus_{k \equiv j \mod m}\lieg_k.\]
The construction is equivalently explained as follows: the data of a $\Z$-grading is equivalent to specifying a Lie group homomorphism $\gamma: \C^* \to \Aut(\lieg)$ by letting $\lambda \in \C^*$ act as scaling by $\lambda^j$ on $\lieg_j$. Then, the associated $\Z/m\Z$-grading is $\gamma(\zeta_m) \in \Aut_m(\lieg)$, where $\zeta_m = \exp(2\pi i / m)$ is a primitive $m$-th root of unity (see Proposition \ref{zmgradingmap} for the equivalence between cyclic automorphisms and gradings). 

We will be interested in $\Z/m\Z$-gradings arising from $\Z$-gradings.

\begin{proposition}\label{innerequalsspecial}
	Let $\theta \in \Aut_m(\lieg)$. The corresponding $\Z/m\Z$-grading is associated to a $\Z$-grading if and only if $\theta \in \Int(\lieg)$.
\end{proposition}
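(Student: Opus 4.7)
The plan is to establish both implications separately: one direction is a direct computation, while the other requires the structure theory of finite-order semisimple elements in the adjoint group.

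For the direction in which a $\Z$-grading is given, suppose $\lieg = \bigoplus_{j \in \Z}\lieg_j$ is a $\Z$-grading with grading element $\zeta \in \lieg_0$ (so $\ad(\zeta)|_{\lieg_j} = j\Id$) whose associated $\Z/m\Z$-grading corresponds to $\theta$. I would verify directly that the automorphism $\exp\!\bigl(\tfrac{2\pi i}{m}\ad(\zeta)\bigr)$ acts on each $\lieg_j$ as scaling by $\zeta_m^j$, which agrees with how $\theta$ acts on $\lieg_j \subseteq \bar{\lieg}_{j \bmod m}$. Since these automorphisms coincide on every piece and the pieces span $\lieg$, we conclude $\theta = \Ad\!\bigl(\exp\!\bigl(\tfrac{2\pi i}{m}\zeta\bigr)\bigr) \in \Int(\lieg)$.

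For the converse, assume $\theta \in \Int(\lieg) \cap \Aut_m(\lieg)$. Since $\theta$ has finite order it is semisimple, and under the identification $\Int(\lieg) \cong G_{\mathrm{ad}}$ (the adjoint group) it is conjugate in $\Int(\lieg)$ to an element of a fixed maximal torus. Conjugating $\theta$ by an inner automorphism $\sigma$ transports any $\Z$-grading by $\sigma$ into another $\Z$-grading, and similarly for the associated $\Z/m\Z$-gradings, so it suffices to prove the statement after replacing $\theta$ by $\sigma\theta\sigma^{-1} = \Ad(\exp(2\pi i h))$ for some $h$ in a Cartan subalgebra $\liet$. The condition $\theta^m = \Id$ forces $\exp(2\pi i m \ad(h)) = \Id$, which restricted to each root space $\lieg_\alpha$ yields $m\alpha(h) \in \Z$. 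Setting $\zeta := mh \in \liet$, the operator $\ad(\zeta)$ then has integer eigenvalues, so $\lieg_j := \ker(\ad(\zeta) - j\Id)$ defines a $\Z$-grading; by the computation of the previous paragraph, $\theta$ acts on $\lieg_j$ as $\zeta_m^j$, so the associated $\Z/m\Z$-grading coincides with the one induced by $\theta$.

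The main obstacle I foresee is the reduction to a maximal torus: one must invoke the fact that finite-order elements of $\Int(\lieg)$, viewed inside the connected complex adjoint group, are semisimple and thus contained in some maximal torus, and then track the conjugating inner automorphism $\sigma$ to recover a $\Z$-grading compatible with the \emph{original} $\theta$ from the one built for its torus representative. Once this bookkeeping is in place, both directions reduce to the same eigenvalue computation linking $\ad(\zeta)$ with $\exp\!\bigl(\tfrac{2\pi i}{m}\ad(\zeta)\bigr)$.
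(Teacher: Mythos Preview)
Your proof is correct. The forward direction is essentially the paper's argument made explicit: the paper observes that the one-parameter group $\gamma:\C^*\to\Aut(\lieg)$ given by $\lambda\mapsto\lambda^j$ on $\lieg_j$ satisfies $\gamma(\zeta_m)=\theta$, whence $\theta\in\Aut(\lieg)_0=\Int(\lieg)$ by connectivity, whereas you compute directly that $\theta=\exp\bigl(\tfrac{2\pi i}{m}\ad(\zeta)\bigr)$.

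For the converse the approaches genuinely diverge. The paper invokes Kac's classification of inner order-$m$ automorphisms (labels $p_0,\dots,p_r$ on the affine Dynkin diagram with $\sum n_kp_k=m$) and reads off the $\Z$-grading by assigning degree $p_k$ to the simple root space $\lieg_{\alpha_k}$. Your argument is more elementary: a finite-order element of the adjoint group is semisimple, hence lies in a maximal torus, so after conjugation $\theta=\exp(2\pi i\,\ad(h))$ with $h\in\liet$, and the condition $\theta^m=\Id$ forces $\zeta:=mh$ to have integral $\ad$-eigenvalues. Your route is shorter and self-contained, needing only standard structure theory rather than the Kac classification. The paper's route, though heavier, has the advantage of introducing the Kac labels $\{p_k\}$ explicitly, which are used immediately afterward (in the discussion following the proposition) to characterise when the lifted $\Z$-grading satisfies $\bar\lieg_0=\lieg_0$ via the condition $p_0>0$. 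Your conjugation bookkeeping is fine, though you could sidestep it entirely by simply choosing at the outset a Cartan subalgebra $\liet$ whose associated maximal torus in $\Int(\lieg)$ contains $\theta$.
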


\begin{proof}
	If the $\Z/m\Z$ grading is associated to a $\Z$-grading, there is a Lie group homomorphism $\gamma : \C^* \to \Aut(\lieg)$ such that $\gamma(\zeta_m) = \theta$ as explained in the paragraph above. By continuity, this means that $\theta \in \Aut(\lieg)_0 = \Int(\lieg)$.
	
	Conversely, by the classification of inner order $m$ automorphisms of $\lieg$ (see \cite[Theorem 3.3.11]{onivin}), there exists a Cartan subalgebra $\liet \subseteq \lieg$ and simple roots $\Pi \subseteq \Delta := \Delta(\lieg, \liet)$ such that:
	
	\begin{itemize}
		\item Each root space $\lieg_\alpha$, where $\alpha \in \Delta$, belongs to a graded piece $\bar{\lieg}_{\bar{p}_\alpha}$, $\bar{p}_\alpha \in \Z/m\Z$.
		\item We can choose representatives $\{p_j\}_{j=0}^r \subseteq \Z_{\ge 0}$ for the indices of the pieces for the simple roots $\Pi = \{\alpha_1,\dots,\alpha_r\}$ as well as for the lowest root $\alpha_0 = -\sum_{k=1}^rn_k\alpha_k$ (where $\delta := \sum_{k=1}^rn_k\alpha_k$ is the highest root), i.e. a labelling of the affine Dynkin diagram of $\lieg$, such that 
		\[m = \sum_{k=0}^rn_kp_k,\]
		(where we take $n_0 = 1$) and, for any root $\alpha = \sum_{k=0}^rn_k^\alpha\alpha_k \in \Delta$ written in the unique coordinates $n_k^\alpha$ with $0 \le n_k^\alpha \le n_k$, we can recover
		\[\bar{p}_{\alpha} = \overline{\sum_{k=0}^rn_k^\alpha p_k}.\]
	\end{itemize} 
	
	Given the above data from the classification, the $\Z/m\Z$-grading of $\theta$ is associated to the $\Z$-grading uniquely determined by setting the degree of the simple root spaces $\lieg_{\alpha_k}$ to be $p_k$. 
\end{proof}

Finally, the last step to relate the theory of Vinberg $\C^*$-pairs to that of Vinberg $\theta$-pairs is to ensure that the resulting group $G_0$ is the same in both cases. In other words, we will restrict the attention to $\Z/m\Z$-gradings which arise from $\Z$-gradings such that $\bar{\lieg}_0 = \lieg_0$. This motivates the notation used at the beginning of Section \ref{zgradings}, where $m$ was used to denote the smallest positive integer such that $|j| \ge m$ implies $\lieg_j = 0$. 

In short, we consider $\Z/m\Z$-gradings arising from $\Z$-gradings of the form
\[\lieg = \lieg_{1-m} \oplus \dots \oplus \lieg_{m-1}.\]
In this case $\bar{\lieg}_0 = \lieg_0$ and $\bar{\lieg}_j = \lieg_j \oplus \lieg_{1-j}$ for $j \in \{1,\dots,m-1\}$.

In terms of the classifying data introduced in the proof of Proposition \ref{innerequalsspecial}, the obtained $\Z$-grading is of the desired form if \[\left|-n_0^\alpha m + \sum_{k=0}^rn_k^\alpha p_k\right| \le m-1\]
\noindent for every $0 \le n_k^\alpha \le n_k$, where $\delta = \sum_{k=1}^rn_k\alpha_k = -\alpha_0$ is the highest root. This is because the left hand side of the previous expression is the degree of the root $\alpha = \sum_{k=0}^rn_k^\alpha\alpha_k$ in the associated $\Z$-grading. Recalling that $\sum_{k=0}^rn_kp_k = m$, the condition is satisfied if and only if $p_0 > 0$.

Furthermore, if we apply an automorphism of the affine Dynkin diagram, which permutes the labels $\{p_j\}$, the resulting $\Z/m\Z$-grading is equivalent to the starting one by conjugation. However, the associated $\Z$-grading obtained via this new set of labels need not be equivalent and could be of our desired form even if the starting one was not. We summarise in the following proposition.

\begin{proposition}
	If $\theta \in \Int_m(\lieg)$ is an inner order $m$ automorphism of $\lieg$, classified by the labels $\{p_j\}$ on the affine Dynkin diagram, and if there exists an automorphism of the diagram that maps a nonzero label $p_j > 0$ to the lowest root $\alpha_0$, then the $\Z/m\Z$-grading given by $\theta$ lifts to a $\Z$-grading of the desired form
	\[\lieg = \lieg_{1-m} \oplus \dots \oplus \lieg_0 \oplus \dots \oplus \lieg_{m-1}.\]
\end{proposition}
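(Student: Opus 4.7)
The plan is to combine the two observations established in the paragraphs immediately preceding the proposition. The first is the criterion that, for a labelling $\{p_k\}_{k=0}^r$ of the affine Dynkin diagram classifying $\theta$, the induced $\Z$-grading (obtained by declaring $\lieg_{\alpha_k}$ to have degree $p_k$) has support inside $\{1-m,\ldots,m-1\}$ if and only if $p_0 > 0$. The second is the fact that an automorphism $\sigma$ of the affine Dynkin diagram, acting by permuting the labels to $\{p'_k\} := \{p_{\sigma^{-1}(k)}\}$, yields an order $m$ automorphism $\theta'$ of $\lieg$ which is conjugate to $\theta$, hence an equivalent $\Z/m\Z$-grading.

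Concretely, I would proceed as follows. Let $\sigma$ be an automorphism of the affine Dynkin diagram, provided by the hypothesis, which maps some node $\alpha_j$ with $p_j > 0$ onto the affine node $\alpha_0$. Set $p'_k := p_{\sigma^{-1}(k)}$. Since diagram automorphisms preserve the marks $n_k$ (which are intrinsic to the combinatorial structure), one has
\[\sum_{k=0}^{r} n_k p'_k = \sum_{k=0}^{r} n_{\sigma^{-1}(k)} p_{\sigma^{-1}(k)} = \sum_{k=0}^{r} n_k p_k = m,\]
so $\{p'_k\}$ is again a valid classifying labelling, and by the second observation it classifies an inner automorphism $\theta'$ conjugate to $\theta$, i.e. the $\Z/m\Z$-grading given by $\theta'$ is equivalent to the one given by $\theta$.

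Next, I would lift this new labelling to a $\Z$-grading in the manner described in the proof of Proposition \ref{innerequalsspecial}, namely by declaring $\lieg_{\alpha_k}$ to have integer degree $p'_k$ (and extending multiplicatively on root strings). Because $p'_0 = p_j > 0$ by construction, the criterion from the paragraph preceding the proposition applies: every root $\alpha = \sum_k n_k^\alpha \alpha_k$ satisfies
\[\left| -n_0^\alpha m + \sum_{k=0}^{r} n_k^\alpha p'_k \right| \le m-1,\]
so the associated $\Z$-grading is concentrated in degrees $1-m$ through $m-1$, as required. Since the projection of this $\Z$-grading modulo $m$ reproduces the $\Z/m\Z$-grading of $\theta'$, which is equivalent to the one of $\theta$, we obtain the desired lift.

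The only subtlety to check carefully is the identification of the replacement of $\theta$ by the conjugate $\theta'$ as a harmless step: since the proposition is stated for the $\Z/m\Z$-grading up to equivalence, and inner conjugation carries $\Z$-gradings to $\Z$-gradings of the same form, this poses no obstacle. I do not anticipate significant technical difficulties beyond verifying that diagram automorphisms genuinely yield conjugate gradings — a standard fact in Kac's classification that is being cited implicitly in the preceding discussion.
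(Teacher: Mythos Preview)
Your proposal is correct and follows exactly the approach of the paper: the proposition is stated there as a summary of the preceding two paragraphs (the criterion $p_0>0$ and the invariance under diagram automorphisms), with no separate formal proof given. You have simply written out in detail the argument the paper leaves implicit, including the verification that the permuted labels still sum to $m$; there is nothing to add or change.
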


\begin{remark}
	The condition can be rephrased as: there is a nonzero label $p_j > 0$ corresponding to a simple or lowest root $\alpha_j$ such that $n_j = 1$ (recall that, for $j > 0$, $n_j$ is the coefficient of $\alpha_j$ in the highest root and $n_0 = 1$).
\end{remark}

Since the automorphism group of the affine Dynkin diagram in type $A$ acts transitively, we deduce the following corollary.

\begin{corollary}
	Every $\theta \in \Int_m(\liesl_n\C)$ lifts to a $\Z$-grading of the desired form.
\end{corollary}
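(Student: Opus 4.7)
The plan is to verify the hypothesis of the preceding Proposition, namely that for every $\theta \in \Int_m(\liesl_n\C)$ we can find an affine Dynkin diagram automorphism sending some nonzero label $p_j > 0$ to the position of the lowest root $\alpha_0$, and then simply invoke the Proposition.

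First, I would recall the shape of the affine Dynkin diagram in type $A_{n-1}$. It is the cyclic graph on $n$ vertices, with the affine node $\alpha_0$ attached so that $\{\alpha_0, \alpha_1, \dots, \alpha_{n-1}\}$ form an $n$-cycle. A direct check (or the standard formula for the highest root of $A_{n-1}$, which is $\delta = \alpha_1 + \alpha_2 + \dots + \alpha_{n-1}$) shows that the marks on the affine diagram are all equal to $1$, i.e.\ $n_0 = n_1 = \dots = n_{n-1} = 1$. Hence the condition in the Remark (existence of some $j$ with $p_j > 0$ and $n_j = 1$, at a simple or lowest root) reduces to the mere existence of some $j$ with $p_j > 0$.

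Next I would observe that $\theta$ nontrivial forces $\sum_{k=0}^{n-1}n_kp_k = m \geq 1$, so at least one $p_j$ is strictly positive (and of course all $n_k = 1$, so this sum just counts the $p_k$'s with multiplicity). Then I would recall that the automorphism group of the affine Dynkin diagram of type $A_{n-1}$ is the dihedral group of the $n$-cycle, which in particular contains the cyclic group of rotations and therefore acts transitively on the vertex set. This transitivity lets us apply a rotation that carries the chosen vertex $\alpha_j$ (with $p_j > 0$) to the affine node $\alpha_0$. After relabelling, the resulting set of Kac labels $\{p'_k\}$ classifies a conjugate of $\theta$ and satisfies $p'_0 > 0$, so the hypothesis of the Proposition is met.

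There is essentially no obstacle here; the only thing to be slightly careful about is the convention that in type $A_{n-1}$ every $n_k$ equals $1$ (including $n_0 = 1$), so that the condition on $n_j$ in the Remark is automatic. Once this and the transitivity of the affine diagram automorphism group are noted, applying the Proposition finishes the proof.
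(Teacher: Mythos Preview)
Your argument is correct and follows exactly the paper's approach: the paper simply notes that the automorphism group of the affine Dynkin diagram in type $A$ acts transitively on the vertices, which is precisely what you verify (together with the observation that all marks $n_k$ equal $1$ so the condition reduces to finding some $p_j>0$). Your version just fills in the details that the paper leaves implicit.
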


In particular, the cyclic quiver grading from Example \ref{cyclic_quiver} comes from a $\Z$-grading of the desired form, which is just the one from Example \ref{linear_quiver}.

\begin{remark}
	In the case of symmetric pairs from Example \ref{cyclicm2}, the ones coming from a $\Z$-grading of the form $\lieg = \lieg_{-1} \oplus \lieg_0 \oplus \lieg_1$ are precisely the Hermitian ones from example \ref{realhermitian}.
\end{remark}

\begin{remark}
	We can always go in the other direction: for any given $\Z$-grading of $\lieg$, since the latter is finite dimensional, we can find a value of $m$ such that
	\[\lieg = \lieg_{1-m} \oplus \dots \oplus \lieg_0 \oplus \dots \oplus \lieg_{m-1}.\]
	Then it is possible to consider the induced $\Z/m\Z$-grading, which satisfies $\bar{\lieg}_0 = \lieg_0$ and $\bar{\lieg}_1 = \lieg_1 \oplus \lieg_{1-m}$. All the theory developed in this paper allows to study the cyclic Higgs bundles determined by any automorphism $\theta \in \Aut_m(G)$ lifting this $\Z/m\Z$-grading of $\lieg$. 
\end{remark}

\section{Higgs bundles and the Toledo invariant}\label{higgstoledo}

Throughout this section, $X$ is a compact Riemann surface of genus $g \ge 2$ and $K_X$ is its canonical line bundle.

\subsection{Higgs bundles and stability}\label{higgsbundles}

We recall the fundamentals of the theory of Higgs pairs associated to an arbitrary representation of a reductive group. Thus, in this Section \ref{higgsbundles}, $G$ denotes a complex reductive Lie group with Lie algebra $\lieg$ and $\rho : G \to GL(V)$ a holomorphic representation. Let also $L$ be a line bundle over $X$. 

\begin{definition}\label{higgspair}
	An \textbf{$L$-twisted $(G,V)$-Higgs pair} is a pair $(E,\varphi)$, where $E$ is a holomorphic principal $G$-bundle on $X$, and $\varphi \in H^0(X, E(V) \otimes L)$, where $E(V) := E \times_G V$ is the associated vector bundle to $E$ via $\rho$. A \textbf{$(G,V)$-Higgs pair} is a $K_X$-twisted $(G,V)$-Higgs pair.
\end{definition}

When $V = \lieg$ and $\rho = \Ad$, the resulting $(G,\lieg)$-Higgs pairs are called $G$-Higgs bundles. When $G^\R \le G$ is a real form with complexified maximal compact $H \le G$ and complexified Cartan decomposition $\lieg = \lieh \oplus \liem$, $V = \liem$, and $\rho : H \to \GL(\liem)$ is the restriction of the adjoint representation, the resulting $(H,\liem)$-Higgs pairs are called $G^\R$-Higgs bundles.

There exist notions of stability \cite{garcia-prada_hitchin-kobayashi_2012} (\cite{garcia-prada_hitchin_kobayashi_nonconnected_unpublished} for the nonconnected $G$ case) which allow to construct a moduli space of $(G,V)$-Higgs pairs. We now recall them. We need the following definition:

\begin{definition}
	Let $G$ be a complex reductive Lie group, $\hat{G} \le G$ a Lie subgroup, and $E$ a principal $G$-bundle. A \textbf{reduction of structure group} of $E$ to $\hat{G}$ is a holomorphic section $\sigma \in H^0(X,E(G/\hat{G}))$, where $E(G/\hat{G}) = E \times_{G} G/\hat{G}$.
\end{definition}

The natural map $E \to E(G/\hat{G})$ has a $\hat{G}$-bundle structure. From the previous definition, a reduction of the structure group is a map $\sigma : X \to E(G/\hat{G})$. Thus, it is possible to pull back the $\hat{G}$-bundle $E$ on $E(G/\hat{G})$ to get $E_{\sigma} := \sigma^*E$, a $\hat{G}$-bundle on $X$. Moreover, there is a canonical isomorphism $E_{\sigma} \times_{\hat{G}} G \simeq E$. The map $E_{\sigma} = \sigma^*E \to E$ induced by the pullback is injective and gives a holomorphic subvariety $E_{\sigma} \subseteq E$. 

Fix a maximal compact subgroup $K \le G$. Let $\mathfrak{k}$ be its Lie algebra, a real subalgebra of $\mathfrak g$. Define for $s \in i\mathfrak k$ the spaces
$$V_s^0 = \{v \in V : \forall t \in \R, \rho(e^{ts})(v) = v\}, \quad V_s = \{v \in V : \rho(e^{ts})(v) \text{ is bounded as } t \to \infty\}.$$ 
\noindent and the subgroups
$$L_s = \{g \in G : \Ad(g)(s)=s\}, \quad P_s = \{g \in G : e^{ts}ge^{-ts} \text{ is bounded as } t \to \infty\}.$$ 
These subgroups of $G$ correspond to the lie algebras $\mathfrak g_s^0$ and $\mathfrak g_s$ defined for the adjoint representation. We also define a character $\chi_s : \mathfrak g_s \to \mathbb C$ given by $\chi_s(x) = B(s,x)$, where $B$ is the Killing form on $\mathfrak g$. The subgroup $L_s$ acts on $V_s^0$, and the group $P_s$ acts on $V_s$ both via $\rho$.

Let $E$ be a principal $G$-bundle and let $\sigma \in H^0(X, E(G/P_s))$ be a reduction of $E$ to $P_s$. If a multiple $q \chi_s$ for some $q \in \Z_{>0}$ lifts to a character $\tilde{\chi}_s :P_s \to \mathbb C^*$, we define the \textbf{degree of the reduction} as
$$\deg E(\sigma, s) := \frac{1}{q}\deg (E_\sigma \times_{\tilde{\chi}_s} \mathbb C^*),$$
\noindent where the line bundle on the right hand side is the associated bundle to the $P_s$-bundle $E_{\sigma}$ via the character $\tilde{\chi}_s$. It is also possible to define this quantity when no multiple of the character lifts to the group, using differential geometric techniques, as follows: given a reduction $\sigma$ of $E$ to $P_s$, there is a further reduction $\sigma'$ to $K_s := K \cap L_s$, the maximal compact of $L_s$. Let $A$ be a connection on $E_{\sigma'}$ and consider its curvature $F_A \in \Omega^2(X, E_{\sigma'}(\mathfrak k_s))$. We have that $\chi_s(F_A) \in \Omega^2(X, i \R)$, and the degree is defined as
$$\deg E(\sigma,s) := \frac{i}{2\pi}\int_X\chi_s(F_A).$$

Now let $\mathfrak z$ be the center of $\mathfrak k$, so that $\mathfrak k = \mathfrak z \oplus \mathfrak k_{ss}$, where $\mathfrak k_{ss}$ is the semisimple part. Given the representation $\rho : G \to \GL(V)$, let $d\rho : \mathfrak g \to \mathfrak{gl}(V)$ be its differential, and define
$$\mathfrak k_{\rho} := \mathfrak k_{ss} \oplus \ker(d\rho|_{\mathfrak k})^\perp,$$
\noindent where the orthogonal is taken in $\mathfrak k$ with respect to the Killing form.

In order to deal with the nonconnected case, denote by $G^0 \le G$ the connected component of the identity, meaning that we have an extension $1 \to G^0 \to G \to \Gamma \to 1$ where we assume that $\Gamma = \pi_0(G)$ is finite. By results of \cite{barajas_non-connected_2023}, there exists an action $\theta : \Gamma \to \aut(G^0)$ and an homomorphism $c: \Gamma \times \Gamma \to Z(G^0)$ (which, in terms of group cohomology, is moreover a cocycle with respect to the action $\theta$), such that $G \simeq G^0 \times_{(\theta, c)} \Gamma$, the latter subscript meaning that the group operation is given by $(g_1, \gamma_1) \cdot (g_2,\gamma_2) = (g_1\theta(\gamma_1)(g_2)c(\gamma_1,\gamma_2),\gamma_1\gamma_2)$. The maximal compact $K$ can also be taken to be $\Gamma$-invariant. Thus it makes sense to consider the fixed points $\mathfrak z^{\Gamma} \subseteq \mathfrak z$ and $\mathfrak k^{\Gamma} \subseteq \mathfrak k$. 

We can now define stability (see \cite{ garcia-prada_hitchin_kobayashi_nonconnected_unpublished}).

\begin{definition}
	Fix a parameter $\alpha \in i\mathfrak z^{\Gamma}$. A $(G,V)$-Higgs pair $(E,\varphi)$ is:
	\begin{itemize}
		\item \textbf{$\alpha$-semistable}, if for any element $s \in i \mathfrak k^{\Gamma}$ and reduction $\sigma \in H^0(X, E(G/P_s))$ such that $\varphi \in H^0(E_\sigma(V_s) \otimes K_X)$, we have $\deg E(\sigma, s) \ge B(\alpha, s)$.
		\item \textbf{$\alpha$-stable}, if it is $\alpha$-semistable and, for any element $s \in i\mathfrak k^{\Gamma}_{\rho}$ and reduction $\sigma \in H^0(X, E(G/P_s))$ such that $\varphi \in H^0(E_\sigma(V_s) \otimes K_X)$, we have $\deg E(\sigma, s) > B(\alpha,s)$.
		\item \textbf{$\alpha$-polystable}, if it is $\alpha$-semistable and, for the $s \in i \mathfrak k^{\Gamma}$ and $\sigma \in H^0(X, E(G/P_s))$ such that $\varphi \in H^0(E_\sigma(V_s) \otimes K_X)$ and we have $\deg E(\sigma, s) = B(\alpha, s)$, there exists a reduction $\sigma' \in H^0(E_\sigma(P_s/L_s))$ of $E_\sigma$ to $L_s$ such that $\varphi \in H^0(E_{\sigma'}(V_s^0) \otimes K_X)$.
	\end{itemize}
\end{definition}

The \textbf{moduli space of $\alpha$-polystable $(G,V)$-Higgs pairs} over $X$ parametrises isomorphism classes of $\alpha$-polystable $(G,V)$-Higgs pairs. We denote it by $\mathcal M^\alpha(G,V)$. A construction as a geometric space via Geometric Invariant Theory is given by Schmitt in \cite{schmitt2008geometric}. When $\alpha = 0$, we simply refer to \textit{semistable, polystable} and \textit{stable} Higgs pairs, and denote the moduli space of polystable bundles as $\mathcal M(G,V)$. We have the following important particular cases discussed after Definition \ref{higgsbundles}: the moduli space of polystable $G$-Higgs bundles, which is denoted by $\mathcal M(G)$, and that of polystable $G^\R$-Higgs bundles, denoted by $\mathcal M(G^\R)$.

All the previous definitions of stability are identical for $L$-twisted $(G,V)$-Higgs pairs for an arbitrary line bundle $L$ over $X$, simply replacing $K_X$ with $L$ when appropriate. The resulting moduli space of $\alpha$-polystable $L$-twisted $(G,V)$-Higgs pairs will be denoted by $\mathcal M^\alpha_L(G,V)$.

\subsection{Cyclic Higgs bundles and the non-abelian Hodge correspondence}\label{chb}

We consider Higgs pairs for the representations studied in Section \ref{zgrad}. Due to this, from now on we adopt the notation in that section: let $G$ be a complex semisimple Lie group with Lie algebra $\lieg$ and $\theta \in \Aut_m(G)$ an automorphism of finite order $m \in \Z_{>0}$. Let $\zeta_m \in \C^*$ be a primitive $m$-th root of unity. Given a $G$-Higgs bundle $(E,\varphi)$, let $\theta(E) := E \times_\theta G$ and let $\theta(\varphi)$ be the induced section of $\theta(E)(\lieg) \otimes K_X$ via the induced $\theta \in \Aut(\lieg)$. There is an action of $\Z/m\Z$ in $\mathcal M(G)$ generated by
\[(E,\varphi) \mapsto (\theta(E), \zeta_m \cdot \theta(\varphi)).\]

Elements on the fixed point locus are called \textbf{cyclic (or $\theta$-cyclic) Higgs bundles}. They were described in \cite{garcia-prada_involutions_2019} as follows. Given $\theta' \in \Aut_m(G)$ in the same outer class as $\theta$ (i.e. $\theta'\theta^{-1} \in \Int(G)$, where $\Int(G)$ is the group of inner automorphisms of $G$), consider the Vinberg $\theta'$-pair $(G^{\theta'}, \bar{\lieg}_1)$ associated to the $\Z/m\Z$-grading given by $\theta'$. We use this representation to define moduli spaces $\mathcal M(G^{\theta'}, \bar{\lieg}_1)$.

Extension from $(G^{\theta'}, \bar{\lieg}_1)$ to $(G,\lieg)$ gives a map
\[\mathcal M(G^{\theta'}, \bar{\lieg}_1) \to \mathcal M(G).\] 
We denote its image by $\widetilde{\mathcal M}(G^{\theta'}, \bar{\lieg}_1)$. From \cite[Theorem 6.3]{garcia-prada_involutions_2019}, we have that for all possible choices of $\theta'$ (in the same outer class as $\theta$), the $\widetilde{\mathcal M}(G^{\theta'}, \bar{\lieg}_1)$ are $\theta$-cyclic. Moreover, the stable and simple $\theta$-cyclic Higgs bundles are all present in these images.

Recall from Section \ref{bothgradings} our situation of interest: we assume that there exists a $\Z$-grading $\lieg = \bigoplus_{j=1-m}^{m-1}\lieg_j$ with $\lieg_{m-1} \neq 0$ giving the $\Z/m\Z$-grading. We then obtain cyclic Higgs bundles by considering $\mathcal M(G^\theta, \lieg_1 \oplus \lieg_{1-m})$ as before, as well as $\mathcal M(G_0, \lieg_1 \oplus \lieg_{1-m})$ after extension of structure group to $G^\theta$.

\begin{example}
	If $G^\R \le G$ is a real form of Hermitian type (see Examples \ref{realhermitian} and \ref{cyclicm2}), we consider the corresponding complexified Cartan decomposition $\lieg = \lieh \oplus \liem$. This is a $\Z/2\Z$-grading of $\lieg$ obtained from a $\Z$-grading $\lieg = \lieg_{-1} \oplus \lieg_0 \oplus \lieg_1$, so that we obtain the corresponding $\theta \in \Aut_2(\lieg)$. Moreover, it comes from $\theta \in \Aut_2(G)$, obtained by letting $\sigma$ be the antiholomorphic involution of $G$ fixing $G^\R$, letting $\tau$ be an antiholomorphic involution for a maximal compact such that $\tau\sigma = \sigma\tau$, and setting $\theta := \tau\sigma$. The corresponding moduli space is precisely $\mathcal M(G^\theta, \lieg_{-1} \oplus \lieg_1) = \mathcal M(H, \liem) = \mathcal M(G^\R)$.
\end{example}

\begin{example}\label{cyclic_quiver}
	For the $\Z$-grading of Example \ref{linear_quiver}, recall from Example \ref{cyclic_quiver} that the associated $\theta \in \Aut_m(\lie{sl}(V))$ lifts to $\theta \in \Aut_m(\SL(V))$ defined by $\theta(g) = h\cdot g \cdot h^{-1}$, where the operator $h \in \GL_n(\mathbb C)$ is given by $h|_{V_j} \equiv \zeta_m^j\Id_{V_j}$ for $\zeta_m$ a primitive $m$-th root of unity. From the fact that $\SL(V)$ is a classical Lie group, we can view $(G_0, \bar{\lieg}_{1})$-Higgs bundles as pairs $(E,\varphi)$ comprised of a holomorphic vector bundle $E = E_0 \oplus \dots \oplus E_{m-1}$ with $\det E \simeq \mathcal O$ and $\rank E_j = d_j$, and $\varphi : E \to E \otimes K_X$ a holomorphic traceless endomorphism with $\varphi(E_j) \subseteq E_{j+1} \otimes K_X$, the indices taken in $\Z/m\Z$. When $m=2$, the resulting moduli space matches $\mathcal M(\SU(d_0,d_1))$ which also belongs to the previous example.
\end{example}

We conclude the section by describing in further detail how the ingredients appearing in the non-abelian Hodge correspondence particularise to our context. We start on the side of Higgs pairs by collecting the Hitchin--Kobayashi correspondence for the spaces $\mathcal M(G_0, \lieg_{1-m} \oplus \lieg_1)$ (more generally, the same correspondence works for arbitrary Vinberg $\theta$-pairs). This follows from the general correspondence \cite{garcia-prada_hitchin-kobayashi_2012,garcia-prada_hitchin_kobayashi_nonconnected_unpublished} as explained in \cite[Section 6.4]{bradlow_general_2021}. 

Let $\tau : \lieg \to \lieg$ be a conjugate-linear involution associated to a compact real form $K \le G$ with Lie algebra $\liek$ and such that $\tau(\bar{\lieg}_j) = \bar{\lieg}_{-j}$ (see \cite[Theorem 3.72]{wallach} for its existence). Let $L \to X$ be a line bundle and fix a Hermitian metric $h_L$ on it. This identifies $L$ with $L^*$. For an $L$-twisted $(G_0, \lieg_{1-m} \oplus \lieg_1)$-Higgs pair $(E,\varphi)$ and a metric $h \in H^0(X, E(G/K))$, we get an involution $\tau_h : E_h(\lieg) \times L \to E_h(\lieg) \otimes L^*$ from $\tau$ and the identification $L \simeq L^*$. Since $\varphi \in H^0(X,E_h(\lieg_{1-m} \oplus \lieg_1) \otimes L)$, we have $[\varphi, -\tau_h(\varphi)] \in H^0(X, E_h(\liek_0))$, for $\liek_0 = \liek \cap \bar{\lieg}_0$. Let $\liez_0$ be the centre of $\liek_0$ and $\Gamma$ as in Section \ref{higgsbundles}. Let $\omega \in \Omega^{1,1}(X)$ be a Kähler form on $X$, and $F_h \in \Omega^{1,1}(E_h(\liek_0))$ the curvature of the Chern connection for $h$.

\begin{proposition}[{Hitchin--Kobayashi correspondence for $L$-twisted $(G_0, \lieg_{1-m} \oplus \lieg_1)$-Higgs pairs \cite[Theorem 2.24]{garcia-prada_hitchin-kobayashi_2012}, \cite[Section 6.4]{bradlow_general_2021}}]\label{hitkob}
	Let $\alpha \in i\liez_0^\Gamma$ be a stability parameter. An $L$-twisted $(G_0, \lieg_{1-m} \oplus \lieg_1)$-Higgs pair is $\alpha$-polystable if and only if there exists a metric $h$ on $E$ such that the equality
	\[F_h + [\varphi,-\tau_h(\varphi)]\omega = -i\alpha\omega\]
	\noindent of elements in $\Omega^{1,1}(X,E_h(\liek_0))$ holds.
\end{proposition}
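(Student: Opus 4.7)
The plan is to realise this statement as a direct specialisation of the general Hitchin--Kobayashi correspondence for $L$-twisted $(G,V)$-Higgs pairs established in \cite{garcia-prada_hitchin-kobayashi_2012, garcia-prada_hitchin_kobayashi_nonconnected_unpublished}, applied with $G = G_0$ and $V = \bar{\lieg}_1 = \lieg_1 \oplus \lieg_{1-m}$ equipped with the restricted adjoint action. First I would note that this action is well-defined thanks to $[\lieg_0,\lieg_j]\subseteq \lieg_j$, and that $G_0$ is connected reductive, which streamlines the non-connected machinery recalled in Section \ref{higgsbundles}.

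Next I would verify the analytic ingredients of the general correspondence. The compact real form $K \le G$ determined by $\tau$ restricts to a maximal compact $K_0 := K \cap G_0 \le G_0$ with Lie algebra $\liek_0 = \liek \cap \bar{\lieg}_0$. The positive-definite Hermitian inner product on $V$ required by the general setup is $\langle u, v \rangle := -B(u, \tau v)$, with $B$ the Killing form; positive definiteness on $V$ is inherited from positive definiteness of $-B(\cdot, \tau\cdot)$ on all of $\lieg$, while $K_0$-invariance follows from $\Ad$-invariance of $B$ together with the fact that $\tau$ commutes with $\Ad_k$ for every $k \in K_0$. A metric $h \in H^0(X, E(G_0/K_0))$ then yields the bundle involution $\tau_h$ of the statement, via the identification $L \simeq L^*$ induced by $h_L$.

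The key step is identifying the moment map for the $K_0$-action on $V$. Using the Killing form to identify $\liek_0^* \simeq i\liek_0$, a standard computation---writing $\xi \mapsto \operatorname{Im}\langle \rho(\xi) \varphi, \varphi\rangle$ and rearranging via $\Ad$-invariance of $B$---identifies this moment map with $\varphi \mapsto [\varphi, -\tau(\varphi)]$. The inclusion $[\bar{\lieg}_1, \bar{\lieg}_{-1}] \subseteq \bar{\lieg}_0$ ensures the output is globally a section of $E_h(\liek_0)$. With these identifications in place, the general Hitchin--Kobayashi equation specialises verbatim to the displayed identity.

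The hard part will be checking that the stability condition stated in Section \ref{higgsbundles} agrees with the restriction of the general notion to our specific $(G_0,V)$ setup: one must verify that the relevant parabolics $P_s \le G_0$ determined by $s \in i\liek_0^\Gamma$ and the associated subspaces $V_s \subseteq V$ recover the general definition without modification, and that the degree pairing together with the term $B(\alpha,s)$ translates verbatim. This is careful bookkeeping rather than new geometry; once carried out, the proposition follows immediately from the cited theorem.
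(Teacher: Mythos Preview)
Your approach is correct and matches the paper's: the paper does not give an independent proof but simply states that the proposition follows from the general Hitchin--Kobayashi correspondence of \cite{garcia-prada_hitchin-kobayashi_2012, garcia-prada_hitchin_kobayashi_nonconnected_unpublished}, as spelled out in \cite[Section 6.4]{bradlow_general_2021}. Your proposal is a more detailed unpacking of exactly this specialisation, so there is nothing to correct or compare.
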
 

Now we assume that $L = K_X$ in order to explore the remaining parts of the non-abelian Hodge correspondence. Recall that the existence of the metric $h$ above corresponds to the existence of a harmonic metric $h'$ on a (smooth) flat $G$-bundle $\mathbb E$. This, by a theorem of Corlette and Donaldson \cite{corlette_flat_1988, donaldson_twisted_1987}, corresponds to a completely reducible representation $\rho : \pi_1(X) \to G$ up to conjugation by $G$. 

We will explore some properties of the harmonic metric $h'$ that arises from a polystable $(G_0,\lieg_{1-m} \oplus \lieg_1)$-Higgs pair. Letting $K \le G$ be, as above, a compact real form of $G$ compatible with the grading, recall that a harmonic metric $h'$ on a flat $G$-bundle $\mathbb E$ is the same as a $\pi_1(X)$-equivariant harmonic map $h' : \tilde{X} \to G/K$, where $\tilde{X}$ is the universal covering space of $X$. We have the following result from \cite[Theorem 4.3.4]{collier_finite_2016}.

\begin{proposition}\label{liftharm}
	Let $h' : \tilde{X} \to G/K$ be the harmonic map corresponding to a polystable $(G_0, \lieg_{1-m} \oplus \lieg_1)$-Higgs pair. Let $K_0 := K \cap G_0$ be the maximal compact subgroup of $K$. There exists an equivariant map $f : \tilde{X} \to G/K_0$ such that
	\[\begin{tikzcd}
		&& {G/K_0} \\
		{\tilde{X}} && {G/K}
		\arrow["{h'}"', from=2-1, to=2-3]
		\arrow[two heads, from=1-3, to=2-3]
		\arrow["f", from=2-1, to=1-3]
	\end{tikzcd}\]
	\noindent commutes. The map $f$ is harmonic and $df(T^{1,0}\tilde{X}) \subseteq G \times_{K_0} ((\lieg_{1-m} \oplus \lieg_1) \otimes \C) \subseteq T^\C G/K_0$.
\end{proposition}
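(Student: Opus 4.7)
The plan is to construct $f$ directly from the reduction of structure group provided by the Hitchin--Kobayashi correspondence of Proposition \ref{hitkob}, and then to verify the lifting property, harmonicity, and the tangent condition. I would organise this in three steps.

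First, construct $f$. By Proposition \ref{hitkob} applied with $\alpha = 0$, the polystable $(G_0, \lieg_{1-m} \oplus \lieg_1)$-Higgs pair $(E, \varphi)$ admits a reduction of structure group from $G_0$ to a maximal compact subgroup $K_0 \le G_0$ whose associated metric $h$ satisfies the Hermite--Einstein type equation $F_h + [\varphi, -\tau_h(\varphi)]\omega = 0$. Pushing forward along the inclusion $K_0 \hookrightarrow G$, this yields a reduction of $E_G := E \times_{G_0} G$ from $G$ to $K_0$. Via non-abelian Hodge, $E_G$ corresponds to a flat $G$-bundle $\mathbb E_G$ on $X$, whose reductions of structure group to a closed subgroup $H \le G$ are in bijective correspondence with $\pi_1(X)$-equivariant maps $\tilde X \to G/H$. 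The reduction constructed above is then the desired equivariant map $f : \tilde X \to G/K_0$.

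Second, show $f$ lifts $h'$. Composing $f$ with the natural projection $G/K_0 \twoheadrightarrow G/K$ produces an equivariant map $\tilde X \to G/K$, i.e., a reduction of $\mathbb E_G$ from $G$ to $K$. I would show that the resulting metric on $E_G$ satisfies the $G$-Higgs bundle Hitchin equation: this follows from Proposition \ref{hitkob}, since $\liek_0 \subseteq \liek$, so the bracket term $[\varphi, -\tau_h(\varphi)]$ taking values in $E_h(\liek_0)$ \emph{a fortiori} takes values in $E_h(\liek)$. Thus $(E_G, \varphi)$ is polystable as a $G$-Higgs bundle, and by uniqueness of harmonic metrics in the Corlette--Donaldson correspondence the resulting reduction of $\mathbb E_G$ to $K$ coincides with the one determined by $h'$, so $f$ indeed lifts $h'$.

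Third, prove the harmonicity and the tangent condition. Decompose the flat connection $D$ on $\mathbb E_G$ using the $K_0$-reduction as $D = D_{K_0} + \Psi$, where $D_{K_0}$ is the Chern connection of $h$ and $\Psi \in \Omega^1(X, E_h(\liek_0^\perp))$, with $\liek_0^\perp \subseteq \lieg_\R$ the real orthogonal complement of $\liek_0$. The identification $T^\C(G/K_0) \simeq G \times_{K_0} ((\lieg_\R/\liek_0)\otimes_\R\C)$ translates $df$ into $\Psi$, and its $(1,0)$ component is by construction the Higgs field $\varphi$, as follows from the standard description of non-abelian Hodge applied to the refined $K_0$-reduction together with the compatibility of the compact involution $\tau$ with the grading ($\tau(\lieg_j) = \lieg_{-j}$). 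Since $\varphi \in H^0(X, E(\lieg_1 \oplus \lieg_{1-m}) \otimes K_X)$ by hypothesis, the tangent inclusion $df(T^{1,0}\tilde X) \subseteq G \times_{K_0}((\lieg_{1-m} \oplus \lieg_1) \otimes \C)$ is immediate. Finally, harmonicity of $f$ is equivalent to $D_{K_0}^* \Psi = 0$, which unwinds to the gauge equation of Proposition \ref{hitkob} together with the holomorphicity $\bar\partial_h \varphi = 0$, exactly as in \cite{corlette_flat_1988, donaldson_twisted_1987}, transplanted to the $K_0$-level.

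The main obstacle is the third step, namely rigorously identifying $\Psi^{1,0}$ with $\varphi$ and rewriting the harmonic map equation in terms of the $K_0$-Hitchin equation. The standard derivation for $G$-Higgs bundles maps to $G/K$ and recovers the Higgs field from the $(1,0)$ part of the differential; here the reduction is strictly finer, so one must carefully track how the $\Z$-grading sits inside the decomposition $\lieg_\R = \liek_0 \oplus \liek_0^\perp$ to ensure the $(1,0)$ part lands only in $(\lieg_1 \oplus \lieg_{1-m}) \otimes \C$ rather than in the full tangent space to $G/K_0$.
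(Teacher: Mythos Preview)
The paper does not supply a proof of this proposition; it attributes the result to \cite[Theorem 4.3.4]{collier_finite_2016} and simply quotes the statement. Your three-step outline is the standard argument behind that reference and is correct in substance: the $K_0$-reduction furnished by Proposition \ref{hitkob} gives the equivariant lift $f$, its projection to $G/K$ solves the $G$-Hitchin equation and hence coincides with $h'$, and the identification $(df)^{1,0} = \varphi$ yields both the tangent condition and harmonicity.

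Two small remarks. In step two, uniqueness of the harmonic metric holds only up to automorphisms of the Higgs pair in the strictly polystable case, so the projection of $f$ agrees with $h'$ only after possibly composing with such an automorphism; this does not affect the existence claim. In step three, the obstacle you isolate is indeed the crux, and your proposed resolution is the right one: writing the flat connection as $D = \nabla_h + \varphi - \tau_h(\varphi)$ with $\nabla_h$ the Chern connection of the $K_0$-reduction, one reads off $\Psi^{1,0} = \varphi \in \lieg_1 \oplus \lieg_{1-m}$ directly, and the compatibility $\tau(\lieg_j) = \lieg_{-j}$ forces the $(0,1)$ part $-\tau_h(\varphi)$ into $\lieg_{-1} \oplus \lieg_{m-1}$, so no extraneous components of $\lieg/\liek_0$ appear.
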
  

Now define an involution $\theta_{Hodge}: \lieg \to \lieg$ by $\theta_{Hodge}|_{\lieg_j} = (-1)^j\Id_{\lieg_j}$. The compact (conjugate-linear) involution $\tau : \lieg \to \lieg$ corresponding to $K$ commutes with $\theta_{Hodge}$ since it satisfies $\tau(\lieg_j) = \lieg_{-j}$ by \cite[Remark 3.11]{biquard_arakelov-milnor_2021}. Hence $\sigma_{Hodge} := \tau \circ \theta_{Hodge} : \lieg \to \lieg$ defines a real form. Moreover, if $\liet \subseteq \lieg$ is a Cartan subalgebra containing the grading element $\zeta$ (which exists because $\zeta$ is semisimple), we must have $\liet \subseteq \lieg_0$, so that $\theta_{Hodge}|_\liet = \Id_{\liet}$. This implies that $\sigma_{Hodge}$ is an inner automorphism and then the real form is said to be of \textit{Hodge type}.

Let $G^\R \le G$ be the real form of Hodge type corresponding to $\sigma_{Hodge}$ and $\lieg^\R$ its Lie algebra. We now recall from \cite[Section 3.2]{biquard_arakelov-milnor_2021} the notion of a \textit{period domain}. By construction we have $G^\R \cap G_0 = G_0 \cap K = K_0$, the compact real form of $G_0$ from before. The homogeneous space
$$D := G^\R/K_0$$
\noindent has as tangent bundle $TD = G^\R \times_{K_0} \mathfrak{q}^\R$ where $\lieg^\R = \liek_0 \oplus \mathfrak{q}^\R$ is an orthogonal decomposition, with complexification $\lieg = \lieg_0 \oplus \left(\bigoplus_{j \neq 0}\lieg_j\right)$. Then $D$ has a natural complex structure, obtained by splitting
$$T^\C D = G^\R \times_{K_0} \bigoplus_{j \neq 0}\lieg_j$$
\noindent into the two pieces
$$T^{1,0}D = G^\R \times_{K_0}\bigoplus_{j > 0}\lieg_j,\quad T^{0,1}D = G^\R \times_{K_0}\bigoplus_{j < 0}\lieg_j.$$

\begin{remark}
	The Toledo character from Section \ref{toledochar} is related to geometric properties of metrics of minimal holomorphic sectional curvature on the period domain $D$. See \cite[Section 3.3]{biquard_arakelov-milnor_2021} for details on this.
\end{remark}

Finally, in the case where $m$ is even, taking a pair $(E,\varphi) \in \mathcal M(G_0, \lieg_{1-m} \oplus \lieg_1)$ and extending its structure group to $G^{\theta_{Hodge}}$ results in a $G^\R$-Higgs bundle, since $\theta_{Hodge}$ acts by $-1$ on both $\lieg_{1-m}$ and $\lieg_1$. At the level of the non-abelian Hodge correspondence this has the consequence that the diagram of Proposition \ref{liftharm} factors through the corresponding homogeneous spaces for $G^\R$, resulting in
\[\begin{tikzcd}
	&& D & {G/K_0} \\
	{\tilde{X}} && {G^\R/K^\R} & {G/K}
	\arrow["{h'}"', from=2-1, to=2-3]
	\arrow["f", from=2-1, to=1-3]
	\arrow[two heads, from=1-3, to=2-3]
	\arrow[hook, from=2-3, to=2-4]
	\arrow[hook, from=1-3, to=1-4]
	\arrow[two heads, from=1-4, to=2-4]
\end{tikzcd},\]
\noindent where $K^\R = K \cap G^\R$ is the maximal compact subgroup of $G^\R$. We remark that the map $f$ satisfies $df(T^{1,0}\tilde{X}) \subseteq G^\R \times_{K_0} (\lieg_{1-m} \oplus \lieg_1)$ so that it is not holomorphic. The case where the Higgs field takes values only in $\lieg_1$, that is, when the projection to $E(\lieg_{1-m}) \otimes K_X$ vanishes, corresponds to the case where $f$ is holomorphic. Such a holomorphic $f$ is known as a \textbf{variation of Hodge structure}. Higgs bundles corresponding to variations of Hodge structure have been studied extensively, for example in \cite{biquard_arakelov-milnor_2021, simpson_higgs_1992}.

\subsection{The Toledo invariant}\label{toledo}

We work with a complex semisimple Lie group $G$ with $\Z$-graded Lie algebra $\lieg = \bigoplus_{j=1-m}^{m-1}\lieg_j$ (such that $\lieg_{m-1} \neq 0$), with grading element $\zeta \in \lieg_0$ inducing $\theta \in \Aut_m(G)$ as in Section \ref{chb}. Let $\chi_T: \lieg_0 \to \C$ be the Toledo character from Section \ref{toledochar}, associated to the Vinberg $\C^*$-pair $(G_0,\lieg_1)$. As in the definition of stability of Higgs pairs in Section \ref{higgsbundles}, we can define the degree of $E$ associated to $\chi_T$ by selecting $q \in \mathbb \Z_{>0}$ such that $q\chi_T$ lifts to a character $\tilde{\chi} : G_0 \to \mathbb C^*$, and setting
\[\deg_{\chi_T}(E) := \frac{1}{q}\deg (E \times_{\tilde{\chi}_T} \mathbb C^*).\]

\begin{definition}
	Let $(E,\varphi)$ be a $(G_0, \lieg_{1-m} \oplus \lieg_1)$-Higgs pair. We define the \textbf{Toledo invariant} of $(E,\varphi)$ by
	\[\tau(E,\varphi) := \deg_{\chi_T}(E).\]
\end{definition}

\begin{remark}\label{dual}
	The invariant has been defined by using the prehomogeneous vector space $(G_0,\mathfrak g_1)$. However, since the Higgs field takes values in $\mathfrak g_1 \oplus \mathfrak g_{1-m}$, it makes sense to consider what happens if we try to use instead the prehomogeneous vector space $(G_0,\mathfrak g_{1-m})$. Recall from Remark \ref{regrading} that this space is of the form $(G_0,\mathfrak g_1')$ for the graded subalgebra $\mathfrak g' = \mathfrak g_{m-1} \oplus \mathfrak g_0 \oplus \mathfrak g_{1-m}$. The corresponding grading element is $\zeta' = \frac{\zeta}{1-m}$. We also need to select a new longest root $\gamma'$, now with the condition that the root space $\mathfrak g_{\gamma'}$  belongs in $\mathfrak g_{1-m}$. We remark that due to the classification of gradings in Section \ref{zmgradings}, $\gamma'$ is always a longest root of $\lieg$ (for example, it is a long root if $\lieg$ is simple). The corresponding Toledo character is $\chi_T'(x) = B(\zeta',x)B^*(\gamma',\gamma') = \frac{1}{1-m}\frac{B^*(\gamma',\gamma')}{B^*(\gamma, \gamma)}\chi_T(x)$. Consequently, we obtain an alternative Toledo invariant
	$$\tau'(E,\varphi) := \frac{1}{1-m}\frac{B^*(\gamma',\gamma')}{B^*(\gamma, \gamma)}\tau(E,\varphi).$$
	Both invariants differ by a negative constant. If $\lieg$ is simple and of simply laced Dynkin diagram (types $A,D,E$), all root lengths are the same and the constant is just $\frac{1}{1-m}$. However, if $\lieg$ is simple but not simply laced, the constant may acquire an extra factor depending on whether every root space contained in $\lieg_1$ corresponds to a short root.
\end{remark}

\begin{example}\label{exampletoledocyclic}
	We will compute the value of the Toledo invariant for the $(G_0, \lieg_{1-m} \oplus \lieg_1)$-Higgs pair of Example \ref{cyclic_quiver}. Recall from said example that these can be seen as vector bundles $E = E_0 \oplus \dots \oplus E_{m-1}$ with $\det E \simeq \mathcal O$, and with a Higgs field $\varphi \in H^0(X, \End(E) \otimes K_X)$ such that $\varphi(E_k) \subseteq E_{k+1} \otimes K_X$, the indices taken in $\mathbb Z/m\mathbb Z$. We first compute the Toledo character. Recall that in this case the grading element for the prehomogeneous vector space $(G_0,\mathfrak g_1)$, using the notation of Example \ref{linear_quiver}, is $\zeta \in \mathfrak g_0 = (\bigoplus_{j = 0}^{m-1}\End(V_j))_0$ given by $\zeta|_{V_j} = (j-\alpha)\Id|_{V_j}$. In $\SL_n(\mathbb C)$ we can choose the invariant form $B(X,Y) = \tr(XY)$. We get, for $x = (f_0,\dots,f_{m-1}) \in \mathfrak g_0$, that
	$$\chi_T(x) = B(\zeta, x)B^*(\gamma, \gamma) = 2\sum_{j=0}^{m-1}(j-\alpha)\tr(f_j).$$	
	A multiple $q\chi_T$ lifts to a character of the group $G_0$ if $2q(\alpha-j)$ is integral for all $j \in \{0,\dots,m-1\}$, resulting in 
	$$\tilde{\chi}(g) = \prod_{j=0}^{m-1}\det(g_j)^{2q(j-\alpha)},$$
	\noindent for $g = (g_0,\dots,g_{m-1}) \in G_0$. Such $q$ exists as $\alpha$ is rational. Then, we have the line bundle
	$$E\times_{\tilde{\chi}} \mathbb C^* = \bigotimes_{j=0}^{m-1}\det(E)^{\otimes 2q(j-\alpha)},$$
	\noindent so that, finally,
	$$\tau(E,\varphi) = \frac{1}{q} \deg (E\times_{\tilde{\chi}} \mathbb C^*) = 2\sum_{j=0}^{m-1}(j-\alpha)\deg E_j.$$
	It can be computed that $\alpha = \frac{\sum_{j=0}^{m-1}jd_j}{\sum_{j=0}^{m-1}d_j}$, and we have that $d_j = \rank E_j$, so we see that the resulting invariant depends on the degrees and ranks of each piece $E_j$.
	
	In the case $m=2$, if we let $(p,q)$ and $(a,b)$ be the ranks and degrees, respectively, of each of the two pieces $E_0$ and $E_1$, the previous formula reads
	$$\tau(E,\varphi) = 2\frac{pb-qa}{p+q},$$
	\noindent which is the Toledo invariant for $\U(p,q)$-Higgs bundles defined originally in \cite{bradlow_surface_2003}. Notice that in this case the considered bundle reduces to a $\SU(p,q)$-Higgs bundle and we have $b = -a$.
\end{example}

\subsection{Arakelov--Milnor--Wood inequality}\label{amwsec}
We work with the same context and notation as in Section \ref{toledo}. Given a $(G_0, \lieg_{1-m} \oplus \lieg_1)$-Higgs pair $(E, \varphi)$, we decompose \[\varphi = \varphi^+ + \varphi^-\]
\noindent where $\varphi^+ \in H^0(X, E(\lieg_1)\otimes K_X)$ and $\varphi^- \in H^0(X, E(\lieg_{1-m}) \otimes K_X)$.

\begin{remark}\label{genericorbit}
	Recall from Proposition \ref{finorbits} that there are finitely many $G_0$-orbits in $\lieg_1$ (resp. in $\lieg_{1-m}$). This implies that there exists a $G_0$-orbit where $\varphi^+$ (resp. $\varphi^-$) takes values generically.
\end{remark}

\begin{definition}
	The \textbf{Toledo rank} of $\varphi^+$ (resp. of $\varphi^-$) is defined as \[\rank_T(\varphi^+) := \rank_T(e)\] 
	\noindent for any element $e \in \lieg_1$ (resp. $e \in \lieg_{1-m}$) belonging to the $G_0$-orbit taken generically by $\varphi^+$ (resp. by $\varphi^-$).
\end{definition}

Notice that the definition for the Toledo rank of $\varphi^-$ uses the Vinberg $\C^*$-pair $(G_0,\lieg_{1-m})$, which involves a Toledo character that differs by a constant from that of $(G_0,\lieg_1)$, as discussed in Remark \ref{dual}.

We can extend the bound for the Toledo invariant in \cite{biquard_arakelov-milnor_2021} corresponding to the case $\varphi^- \equiv 0$.

\begin{theorem}[Arakelov--Milnor--Wood inequality]\label{amw}
	Let $G$ be a complex semisimple Lie group with $\Z$-graded Lie algebra $\lieg = \bigoplus_{j=1-m}^{m-1}\lieg_j$ for some $m \in \mathbb Z_{>0}$ such that $\lieg_{m-1} \neq 0$ and with grading element $\zeta \in \lieg_0$. Let $\alpha := \lambda \zeta$ for $\lambda \in \R$. Let $\gamma$ be the longest root such that $\lieg_{\gamma} \subset \lieg_1$.
	
	If $(E,\varphi)$ is an $\alpha$-semistable $(G_0,\lieg_1 \oplus \lieg_{1-m})$-Higgs pair, the Toledo invariant $\tau(E,\varphi)$ satisfies the inequality
	$$-\tau_L \le \tau(E,\varphi),$$
	\noindent where
	\begin{align*}
		\tau_L &= \rank\nolimits_T(\varphi^+)(2g-2) - \lambda(B^*(\gamma,\gamma)B(\zeta, \zeta) - \rank\nolimits_T(\varphi^+)).
	\end{align*}
	
	Moreover, if $m=2$ or $\varphi^- = 0$, the upper bound
	$$\tau(E,\varphi) \le \tau_U,$$
	\noindent where 
	$$\tau_U = \rank\nolimits_T(\varphi^-)(2g-2) + \lambda(B^*(\gamma,\gamma)B(\zeta, \zeta) - \rank\nolimits_T(\varphi^-)),$$
	\noindent also holds.
\end{theorem}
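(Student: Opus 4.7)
The plan is to extend the Arakelov--Milnor inequality of \cite[Theorem 4.1]{biquard_arakelov-milnor_2021}, which treats the case $\varphi^- \equiv 0$, to the general setting where the Higgs field has values in $\lieg_1 \oplus \lieg_{1-m}$. The crucial structural observation is that $\lieg_{1-m}$ is the ``lowest-weight'' summand of the grading: since $\lieg_j = 0$ whenever $|j| \ge m$, we have $[\lieg_{1-m}, \lieg_{1-m}] = 0$ and $[\lieg_1, \lieg_{1-m}] \subseteq \lieg_{2-m}$, so the $\varphi^-$ component does not obstruct the destabilising construction built from $\varphi^+$. The upper bound splits into two independent subcases: when $\varphi^- \equiv 0$ it follows from a direct trivial-reduction argument, and when $m = 2$ it reduces to the lower bound applied to the dual Vinberg $\C^*$-pair via the Cartan involution.

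\textbf{Lower bound.} Set $r := \rank_T(\varphi^+)$. Choose a generic orbit representative $e \in \lieg_1$ of $\varphi^+$ and complete it to an $\liesl_2$-triple $(h,e,f)$ with $h \in \lieg_0$ and $f \in \lieg_{-1}$. Let $(\hat G_0, \hat\lieg_1) \subseteq (G_0, \lieg_1)$ be the maximal JM-regular prehomogeneous vector subspace (Definition \ref{maxphv}). By Lemma \ref{relinv}, there exist $q \in \Z_{>0}$, a character $\tilde\chi_T : \hat G_0 \to \C^*$ lifting $q\chi_T|_{\hat\lieg_0}$, and a nonzero relative invariant $F : \hat\lieg_1 \to \C$ of degree $qr$. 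The generic orbit structure of $\varphi^+$ yields a reduction $\sigma$ of $E$ to a parabolic $P \le G_0$ whose Levi contains $\hat G_0$, along which $F(\varphi^+)$ descends to a nonzero holomorphic section of a line bundle of degree $q\tau(E,\varphi) + qr(2g-2)$. The destabilising element $s \in i\liek_0^\Gamma$ is essentially $-\zeta$ corrected by $h/2$ to account for the non-JM-regular directions; a direct eigenvalue computation shows that $\ad(s)$ has non-positive weights on $\lieg_{1-m}$, using the fact that $\lieg_{1-m}$ is the lowest-weight piece of the $\liesl_2$-module structure on $\lieg$ induced by $(h,e,f)$. Hence $\varphi = \varphi^+ + \varphi^-$ is a section of $E_\sigma(V_s) \otimes K_X$, and the $\alpha$-semistability inequality $\deg E(\sigma, s) \ge B(\lambda\zeta, s)$, combined with the degree of $F(\varphi^+)$, rearranges into $\tau(E,\varphi) \ge -\tau_L$.

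\textbf{Upper bound.} When $\varphi^- \equiv 0$, apply $\alpha$-semistability with $s = -\zeta$ and the trivial reduction $\sigma_0$ of $E$ to $P_{-\zeta} = G_0$ (equality because $\ad(\zeta)|_{\lieg_0} = 0$). Since $\ad(-\zeta) = -\Id$ on $\lieg_1$, we have $\lieg_1 \subseteq V_{-\zeta}$ and $\varphi \in H^0(X, E(V_{-\zeta}) \otimes K_X)$. The identity $\chi_{-\zeta} = -\chi_T/B^*(\gamma,\gamma)$ gives $\deg E(\sigma_0, -\zeta) = -\tau(E,\varphi)/B^*(\gamma,\gamma)$, and rearranging $\deg E(\sigma_0, -\zeta) \ge B(\lambda\zeta, -\zeta) = -\lambda B(\zeta,\zeta)$ yields $\tau(E,\varphi) \le \lambda B^*(\gamma,\gamma) B(\zeta,\zeta) = \tau_U|_{\varphi^- = 0}$. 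When $m=2$, the Cartan involution exchanges $\lieg_1 \leftrightarrow \lieg_{-1} = \lieg_{1-m}$. Applying the lower-bound argument above to the dual Vinberg $\C^*$-pair $(G_0, \lieg_{-1})$ of Remark \ref{dual}, with $\varphi^-$ now playing the role of the positive component and with the negative proportionality constant $\tfrac{1}{1-m}\tfrac{B^*(\gamma',\gamma')}{B^*(\gamma,\gamma)} = -1$ relating $\tau'$ to $\tau$, yields the bound $\tau(E,\varphi) \le \tau_U$.

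\textbf{Main obstacle.} The principal technical subtlety is the compatibility check in the lower-bound step: one must verify that the parabolic reduction $\sigma$ built from $\varphi^+$ accommodates $\varphi^-$ as a section of $E_\sigma(V_s) \otimes K_X$, which reduces to the $\ad(s)$-eigenvalue estimate on $\lieg_{1-m}$ sketched above. A secondary bookkeeping point is ensuring that the $\lambda$-dependent shift $B(\lambda\zeta, s)$ in the semistability inequality combines correctly with the degree of $F(\varphi^+)$ to reproduce the exact expression $\tau_L = \rank_T(\varphi^+)(2g-2) + \lambda(B^*(\gamma,\gamma)B(\zeta,\zeta) - \rank_T(\varphi^+))$ stated in the theorem; this is precisely the computation that transmutes the passage between $G_0$ and $\hat G_0$ into the stated formula.
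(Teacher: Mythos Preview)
Your approach matches the paper's. The one bookkeeping slip is in the line-bundle degree: you write that $F(\varphi^+)$ is a section of a line bundle of degree $q\tau(E,\varphi) + qr(2g-2)$, but $F$ is a relative invariant for the Toledo character $\hat\chi_T(x) = B^*(\gamma,\gamma)B(\tfrac{h}{2}, x)$ of the JM-regular subpair $(\hat G_0,\hat\lieg_1)$, not for $\chi_T|_{\hat\lieg_0}$. The correct line bundle is $E_\sigma(\hat\chi_{T,q}) \otimes K_X^{qr}$, of degree $q\deg_{\hat\chi_T}(E_\sigma) + qr(2g-2)$, which is not $q\tau(E,\varphi) + qr(2g-2)$ unless $s=0$. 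The paper makes the splitting $\chi_T(x) = B^*(\gamma,\gamma)\bigl(B(\tfrac{h}{2}, x) + B(s, x)\bigr)$ explicit, yielding $\tau(E,\varphi) = \deg_{\hat\chi_T}(E_\sigma) + \deg E(\sigma, B^*(\gamma,\gamma)s)$; the relative invariant then bounds the first summand and $\alpha$-semistability the second, after which they are recombined. Also watch the sign: the paper takes $s = \zeta - \tfrac{h}{2}$ (not $-\zeta + \tfrac{h}{2}$ as your phrasing suggests), and the non-positivity of the $\ad(s)$-eigenvalues on $\lieg_{1-m}$ is the computation $[s,x_j] = (1-m-j)x_j$ with $|j| \le m-1$.

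Your direct upper-bound argument for the case $\varphi^- = 0$, using $s = -\zeta$ and the trivial reduction to $P_{-\zeta} = G_0$, is valid and slightly more self-contained than the paper, which simply cites \cite[Theorem 5.3]{biquard_arakelov-milnor_2021} for that case; the $m=2$ upper bound via the dual pair $(G_0,\lieg_{-1})$ is exactly the paper's argument.
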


\noindent Proof. Let $e \in \mathfrak g_1$ be an element in the orbit where $\varphi^+$ takes values generically (see Remark \ref{genericorbit}) and consider the associated $\mathfrak{sl}_2$-triple $(h,e,f)$ with $h \in \mathfrak g_0$. Let $(\hat{G}_0,\hat{\mathfrak g}_1)$ be the maximal JM-regular prehomogeneous vector subspace for $e$ from Definition \ref{maxphv}. Let $s := \zeta - \frac{h}{2}$ and consider the associated parabolic Lie algebra $\mathfrak g_{0,s} \subseteq \mathfrak g_0$ of the parabolic subgroup $P_{0,s} \le G_0$. We have a reduction $\sigma \in H^0(E(G_0/P_{0,s}))$: first, it is well defined over the $x \in X$ where $\varphi^+(x)$ is in the orbit of $e$ (as $P_{0,s}$ is constructed from this value $e$), and since this happens generically, the section extends to $X$. Notice that the Toledo character can be split
$$\chi_T(x) = B^*(\gamma,\gamma)(B(\frac{h}{2},x) + B(s,x)),$$
\noindent so that, multiplying by an appropriate $q \in \mathbb \Z_{>0}$, lifting to $\tilde{\chi}_T$ on the group, applying to the reduction $E_\sigma$, taking degree and dividing by $q$ (the usual steps from previous definitions) we get:
\begin{equation}\label{am1}
	\tau(E,\varphi) = \deg E(\sigma, B^*(\gamma,\gamma)s) + \deg_{\hat{\chi}_T}(E_\sigma),
\end{equation}
\noindent where the rightmost term is the one corresponding to the character $\hat{\chi}_T : x \mapsto B^*(\gamma,\gamma)B(\frac{h}{2},x)$ of $\liep_{0,s}$.

Now we will get bounds for both of these terms. The Levi factor $L_{0,s}$ of $P_{0,s}$ is the centralizer of $s$ and thus it is $\hat{G}_0$. This means that the $P_{0,s}$-bundle $E_\sigma$ can be further reduced by projecting to the Levi factor, obtaining $E_\sigma(\hat{G}_0)$ a $\hat{G}_0$-bundle. Using the $\hat{G}_0$ action on $\hat{\lieg}_1$ we may construct the associated bundle $E_\sigma(\hat G_0)(\hat{\lieg}_1)$. By construction (recall that $\varphi^+$ takes values generically in the open $\hat{G}_0$-orbit of $\hat{\lieg}_1$) we have a section $\varphi^+ \in H^0(E_\sigma(\hat{G}_0)(\hat{\mathfrak g}_1) \otimes K_X)$. Let $F : \hat{\mathfrak g}_1 \to \mathbb C$ be the relative invariant of Lemma \ref{relinv} corresponding to the lift $\hat \chi_{T,q}$ of the character $q\hat{\chi}_T$. Notice that its degree is $q\rank_T(\hat{G}_0, \hat{\mathfrak g}_1) = q\hat{\chi}_T(\frac{h}{2})$. The relative invariant does not vanish at the open orbit $\hat{\Omega}$ (or else $F \equiv 0$), and $\varphi^+$ is generically in that orbit, so $F(\varphi^+)$ is a nonzero section of the line bundle $E_\sigma(\hat \chi_{T,q}) \otimes K_X^{q\hat{\chi}_T(\frac{h}{2})}$. This means that the degree of this line bundle is non-negative, namely
\begin{equation}\label{am2}
	\deg_{\hat{\chi}_T}(E_\sigma) = \frac{1}{q}\deg(E_\sigma(\hat{\chi}_{T,q})) \ge -\hat{\chi}_T\left(\frac{h}{2}\right)(2g-2),
\end{equation}
\noindent yielding a bound for one of the terms.

For the other term we use the $\alpha$-semistability. On one hand, $[s,e] = 0$ by definition, so $\varphi^+ \in H^0(E_\sigma(\mathfrak g_{1,s}) \otimes K_X)$. On the other hand, we also have $\mathfrak g_{1-m} \subseteq \mathfrak g_{1-m,s}$. In order to see this, let $x \in \mathfrak g_{1-m}$ and decompose in $\ad(\frac{h}{2})$-eigenvectors $x = \sum_{j}x_j$ with $x_j \in \mathfrak g_{1-m}$ corresponding to the eigenvalue $j$. This is possible due to the fact that $\mathfrak g_{1-m}$ is $\ad(\frac{h}{2})$-invariant. As $e \in \lieg_1$, we have $\ad(e)^{2m-1} \equiv 0$ and hence $j \in \{1-m, \frac{1-2m}{2}, \dots, \frac{2m-1}{2}, m-1\}$. Thus,
$$[s,x_j] = [\zeta - \frac{h}{2},x_j] = (1-m-j)x_j.$$ 
Since $1-m \le j \le m-1$, we have that $\lambda_j := (1-m-j) \le 0$. Thus $\text{Ad}(e^{ts})(x_j) = \exp(\ad(ts))(x_j) = e^{t\lambda_j}x_j$ is bounded as $t \to \infty$, and hence $x \in \mathfrak g_{1-m,s}$. In other words, we have seen that $\varphi^- \in H^0(E_\sigma(\mathfrak g_{1-m,s}) \otimes K_X)$ and thus $\varphi \in H^0(E_\sigma((\lieg_1 \oplus \lieg_{1-m})_s) \otimes K_X)$. We can then apply the definition of $\alpha$-semistability to get
\begin{equation}\label{am3}
	\deg E(\sigma, B^*(\gamma,\gamma)s) \ge B(\lambda\zeta, B^*(\gamma,\gamma)s).
\end{equation}
Combining equations (\ref{am1}), (\ref{am2}) and (\ref{am3}) results in
$$\tau(E,\varphi) \ge -\hat{\chi}_T\left(\frac{h}{2}\right)(2g-2) + \lambda B^*(\gamma,\gamma)B(\zeta, s).$$

What remains is just to rewrite $-\hat{\chi}_T(\frac{h}{2}) = B^*(\gamma,\gamma)B(\frac{h}{2},\frac{h}{2}) = B^*(\gamma,\gamma)(B(\zeta,\frac{h}{2}) + B(s,\frac{h}{2})) = B^*(\gamma,\gamma)(B(\zeta, \frac{h}{2})+0) = \rank_T(\varphi^+)$, as well as
$$\lambda B^*(\gamma,\gamma)B(\zeta, s) = \lambda(B^*(\gamma,\gamma)B(\zeta,\zeta)-B^*(\gamma,\gamma)B(\zeta, \frac{h}{2})) =$$$$= \lambda(B^*(\gamma,\gamma)B(\zeta, \zeta)-\rank\nolimits_T(\varphi^+)).$$

The upper bound is proven by replicating the argument with the prehomogeneous vector space $(G_0,\mathfrak g_{1-m})$ instead of $(G_0,\mathfrak g_1)$. This yields a lower bound for the Toledo invariant $\tau'$ of Remark \ref{dual}, which is related to $\tau$ by a negative constant and hence provides an upper bound for $\tau$. In order for the argument to work with $(G_0,\mathfrak g_{1-m})$, it is essential that the $\mathbb Z$-grading corresponding to this prehomogeneous vector space, which is the one given by $\frac{1}{1-m}\zeta$, also induces the pair $(G_0,\lieg_1 \oplus \lieg_{1-m})$. This happens when $m=2$, in which case we also have that $\tau' = -\tau$. If this is not the case, the only part of the argument that no longer works is the one using semistability, as it could be the case that $\varphi^+ \notin H^0(E_\sigma(\mathfrak g_{1,s}) \otimes K_X)$. The $\varphi^- = 0$ case is established in \cite[Theorem 5.3]{biquard_arakelov-milnor_2021}.\qed

The bound applies for $\alpha$-semistable pairs, so that in particular it holds in the moduli spaces $\mathcal M^\alpha(G_0,\lieg_1 \oplus \lieg_{1-m})$ of polystable elements. In the case of $\alpha = 0$, we get that for any $(G_0,\lieg_1 \oplus \lieg_{1-m})$-semistable Higgs pair $(E,\varphi)$ (in particular, for elements of $\mathcal M(G_0,\lieg_1 \oplus \lieg_{1-m})$) the following inequality is satisfied:
\[-(2g-2)\rank\nolimits_T(\varphi^+) \le \tau(E,\varphi).\]
By using the fact that $\rank_T(\varphi^+) \le \rank_T(G_0,\lieg_1)$, a (coarser) bound independent of the specific element is achieved:
$$-(2g-2)\rank\nolimits_T(G_0,\lieg_1) \le \tau(E,\varphi).$$

\begin{remark}
	For moduli spaces of $G^\R$-Higgs bundles, where $G^\R \subseteq G$ is a real form of Hermitian type (see Example \ref{realhermitian}), Theorem \ref{amw} also provides an upper bound. For $\alpha = 0$ this results in
	\[\tau(E,\varphi) \le (2g-2)\rank\nolimits_T(\varphi^-),\]
	\noindent as well as combined coarser bound independent of the specific element
	$$|\tau(E,\varphi)| \le (2g-2)\rank(G_0,\mathfrak g_1).$$
	
	This is the result presented in \cite{biquard_higgs_2017} generalizing previous specializations such as \cite{bradlow_surface_2003} for $G^\R = \SU(p,q)$. 
\end{remark}

\begin{remark}
	For the case associated to the $\Z$-gradings of $\liesl_n\C$ in Example \ref{linear_quiver}, expressing the Toledo invariant in terms of ranks and degrees of vector bundles (see Example \ref{exampletoledocyclic}) and the value $\rank_T(G_0, \lieg_1)$ in terms of the $d_i$ and $m$ makes it possible to derive the bound using the semistability via slope inequalities of invariant subbundles constructed from the kernels and images of the maps $E_i \to E_{i+1} \otimes K_X$ given by the Higgs field. This strategy was followed for the $m=2$ case in \cite{bradlow_surface_2003}. For $m > 2$, while still possible, results in cumbersome computations.
\end{remark}
\section{Cayley correspondence and maximal Toledo invariant}\label{cayley}
\subsection{Uniformising Higgs bundles}
In this section we are interested in $(G_0,\lieg_1 \oplus \lieg_{1-m})$-Higgs pairs whose Toledo invariant is \textit{maximal}, that is, it attains the bound from Theorem \ref{amw}. We denote by $\mathcal M^{\text{max}}(G_0,\lieg_1 \oplus \lieg_{1-m}) \subseteq \mathcal M(G_0, \lieg_1 \oplus \lieg_{1-m})$ the locus of polystable pairs with maximal Toledo invariant. We work in the case where $(G_0, \lieg_1)$ is JM-regular (this is the natural generalisation of symmetric pairs of \textit{tube type} \cite[Example 2.12]{biquard_arakelov-milnor_2021}, where a description of the pairs with maximal invariant exists \cite{biquard_higgs_2017}). Such pairs can be characterised as follows:
\begin{proposition}\label{maximaltoledo}
	Suppose that $(G_0,\mathfrak g_1)$ is JM-regular. A polystable $(G_0,\lieg_1 \oplus \lieg_{1-m})$-Higgs pair $(E,\varphi)$ is maximal if and only if $\varphi^+(x) \in \Omega \subseteq \mathfrak g_1$ for all $x \in X$, where $\Omega$ is the open orbit.
\end{proposition}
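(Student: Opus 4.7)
The plan is to revisit the proof of Theorem \ref{amw} and pinpoint, using the JM-regularity hypothesis, when its constituent (in)equalities (\ref{am1}), (\ref{am2}), (\ref{am3}) become equalities. Since the moduli space in question is $\mathcal M(G_0, \lieg_1 \oplus \lieg_{1-m})$, one has $\alpha = 0$ and $\lambda = 0$, so $\tau_L = (2g-2)\rank_T(\varphi^+)$. The element-specific bound $-(2g-2)\rank_T(\varphi^+) \le \tau(E,\varphi)$ together with $\rank_T(\varphi^+) \le \rank_T(G_0, \lieg_1)$ yields the coarse bound $-(2g-2)\rank_T(G_0, \lieg_1) \le \tau(E,\varphi)$, and maximality means attaining this coarsest bound.

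For the direction $(\Leftarrow)$, I assume $\varphi^+(x) \in \Omega$ for every $x \in X$, so the generic value of $\varphi^+$ lies in $\Omega$ and $\rank_T(\varphi^+) = \rank_T(G_0, \lieg_1)$. I pick $e \in \Omega$ in the generic orbit of $\varphi^+$; since $(G_0, \lieg_1)$ is JM-regular and $\Ad(G_0)$ fixes $\zeta$, I may complete $e$ to an $\liesl_2$-triple $(h, e, f)$ with $h = 2\zeta$. Hence the element $s := \zeta - h/2$ appearing in the proof of Theorem \ref{amw} vanishes, so $P_{0,s} = G_0$, the reduction $\sigma$ is trivial, and the maximal JM-regular prehomogeneous vector subspace $(\hat{G}_0, \hat{\lieg}_1)$ coincides with $(G_0, \lieg_1)$. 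In particular $\hat{\chi}_T = \chi_T$ and (\ref{am3}) reduces to the trivial equality $0 = 0$. The relative invariant $F : \lieg_1 \to \C$ from Lemma \ref{relinv} has degree $q \rank_T(G_0, \lieg_1)$ and is nonvanishing on $\Omega$, so $F(\varphi^+)$ is a nowhere-vanishing section of $E(\tilde{\chi}_{T,q}) \otimes K_X^{q \rank_T(G_0, \lieg_1)}$; this line bundle therefore has degree zero, providing equality in (\ref{am2}). Combining the two equalities gives $\tau(E,\varphi) = -(2g-2)\rank_T(G_0, \lieg_1)$, so $(E,\varphi)$ is maximal.

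For the direction $(\Rightarrow)$, I assume $(E,\varphi)$ is maximal. The element-specific bound $\tau(E,\varphi) \ge -(2g-2)\rank_T(\varphi^+)$ together with maximality forces $\rank_T(\varphi^+) = \rank_T(G_0, \lieg_1)$, so $\varphi^+$ is generically in $\Omega$. I pick $e \in \Omega$ in this generic orbit and repeat the setup above: JM-regularity again makes $s = 0$, the reduction $\sigma$ trivial, $(\hat{G}_0, \hat{\lieg}_1) = (G_0, \lieg_1)$, and (\ref{am3}) a trivial equality. Maximality then forces equality in (\ref{am2}) as well, so $F(\varphi^+)$ has no zeros. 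Since the zero locus of the relative invariant $F$ coincides with $\lieg_1 \setminus \Omega$, I conclude that $\varphi^+(x) \in \Omega$ for every $x \in X$.

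The main point requiring care is the final step of the $(\Rightarrow)$ direction, namely the identification of the zero locus of $F$ with $\lieg_1 \setminus \Omega$. This relies on taking $F$ from Lemma \ref{relinv} as a fundamental relative invariant for the regular prehomogeneous vector space $(G_0, \lieg_1)$, whose zero locus is well known to equal the singular set of the space; regularity of $(G_0, \lieg_1)$ as a prehomogeneous vector space follows from JM-regularity, so the standard theory applies.
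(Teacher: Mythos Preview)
Your proof is correct and follows essentially the same approach as the paper: both revisit the proof of Theorem \ref{amw}, use JM-regularity to obtain $s=0$ (making (\ref{am3}) trivially an equality), and then identify equality in (\ref{am2}) with the nonvanishing of the section $F(\varphi^+)$, hence with $\varphi^+(x)\in\Omega$ for every $x$. Your write-up is more explicit in separating the two implications and in flagging the need that the zero locus of $F$ equal $\lieg_1\setminus\Omega$, a point the paper leaves implicit.
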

\noindent Proof. First, if $(E,\varphi)$ is maximal we need that $\rank(\varphi^+) = \rank_T(G_0,\mathfrak g_1)$ as we have $\tau(E,\varphi) \ge -\rank_T(\varphi^+)(2g-2)$. This implies that $\varphi^+$ is in $\Omega$ generically. With the notation from the proof of Theorem \ref{amw}, by JM-regularity we have $s=0$ and thus the only thing that we have to inspect is whether $\deg_{\hat{\chi}_T}(E_\sigma) = -\hat{\chi}_T(\frac{h}{2})(2g-2)$ holds. This happens if and only if the degree of the line bundle $E_\sigma(\hat{\chi}_{T,q}) \otimes K_X^{q\hat{\chi}_T(\frac{h}{2})}$ is zero, which happens if and only if its nonzero section $F(\varphi^+)$ is nonvanishing, meaning that $\varphi^+(x) \in \Omega$ for all $x \in X$. \qed

The space $\mathcal M^{\text{max}}(G_0, \lieg_1 \oplus \lieg_{1-m})$ is nonempty because already in the case $\varphi^- \equiv 0$ there are polystable $(G_0,\mathfrak{g}_1)$-Higgs pairs attaining this bound \cite[Proposition 6.1]{biquard_arakelov-milnor_2021} (we also explicitly showcase some pairs with $\varphi^- \not\equiv 0$ in Example \ref{maximalexample}). We recall the construction. 

Let $e \in \Omega \subseteq \mathfrak g_1$ be an element of the open orbit and complete it to an $\mathfrak {sl}_2$-triple $(h,e,f)$ with $h \in \mathfrak g_0$, $f \in \mathfrak g_{-1}$. Let $S \le G_0$ be the connected subgroup with Lie algebra $\gen{e,f,g} \subseteq \mathfrak g$, which can be isomorphic to $\text{PSL}_2(\mathbb C)$ or $\SL_2(\mathbb C)$ depending on $G$ and the triple. Let $C \le G$ be the reductive subgroup centralizing $\{h,e,f\}$, which coincides with $G_0^e \le G_0$: it has to be contained in $G_0$ in order to stabilize $h$ and it should also stabilize $e$, and this is sufficient. Let $T \le S$ be the connected subgroup with Lie algebra $\gen{h}$. We have $T \le G_0$ as $h \in \mathfrak g_0$. There are two cases:

\begin{itemize}
	\item If $S \simeq \SL_2(\mathbb C)$, it is simply connected and thus the representation $\gen{h} \to \mathfrak{gl}(\gen{e})$ given by $\lambda h \mapsto \ad(\lambda h)$ lifts to a representation $\mathbb C^* \simeq T \to \GL(\gen{e})$, this being just the adjoint representation. As $[h,e] = 2e$, the $\mathbb C^*$-action we get on $\gen{e}$ via this lift is $\lambda \cdot e = \lambda^2e$. Choose a square root $K_X^{\frac{1}{2}}$ (this can be done as $\deg K_X = 2g-2$ is even) and let $E_T$ be the frame bundle for $K_X^{-\frac{1}{2}}$, in other words, the $\mathbb C^*$-bundle such that the bundle associated to the standard representation of $\mathbb C^*$ in $\mathbb C$ is $E_T(\mathbb C) \simeq K_X^{-\frac{1}{2}}$. Using the isomorphism $\mathbb C^* \simeq T$ we have a $T$-bundle, and since $T$ acts on $\gen{e}$ with weight $2$ we have $E_T(\gen{e}) \simeq (K_X^{-\frac{1}{2}})^2 = K_X^{-1}$. This means that $E_T(\gen{e}) \otimes K_X \simeq \mathcal O$ so we can define a constant section $e \in H^0(X, E_T(\gen{e}) \otimes K_X)$. Extending the structure group gives $(E_T(G_0),e)$ a $(G_0,\gen{e})$-Higgs pair which is in particular a $(G_0,\lieg_1 \oplus \lieg_{1-m})$-Higgs pair.
	
	\item If $S \simeq \text{PSL}_2(\mathbb C)$, we can take its universal cover $\SL_2(\mathbb C) \to S$ which is of degree two. The torus $T \subseteq S$ lifts to $\hat{T} \subseteq \SL_2(\mathbb C)$. We have that $\hat{T}$ is a double cover of $T$ and there are isomorphisms with $\mathbb C^*$ such that map $\hat{T} \simeq \mathbb C^* \to T \simeq \mathbb C^*$ is given by $\lambda \mapsto \lambda^2$. By the previous argument the adjoint action of $\hat{T} \simeq \mathbb C^*$ on $\gen{e}$ is given by $\lambda \cdot e = \lambda^2 e$, so that it descends to $T$ as $\lambda \cdot e = \lambda e$. Now we let $E_T$ be the frame bundle of $K_X^{-1}$. The associated bundle is $E_T(\gen{e}) \simeq K_X^{-1}$ and hence $E_T(\gen{e}) \otimes K_X \simeq \mathcal O$. Thus $e$ defines a holomorphic section of $E_T(\gen{e}) \otimes K_X$. Extending the structure group gives $(E_T(G_0),e)$ a $(G_0,\gen{e})$-Higgs pair and in particular a $(G_0,\lieg_1 \oplus \lieg_{1-m})$-Higgs pair.
\end{itemize} 

The resulting $(E_T(G_0),e)$ is called \textbf{uniformising Higgs bundle} and is polystable (see \cite{hitchin_self-duality_1987}). By construction $\varphi^+ = e$ is always in the open orbit, hence its Toledo invariant attains the bound.
\subsection{Cayley correspondence}
Now we extend the description of $\mathcal M^{\text{max}}(G_0, \lieg_1 \oplus \lieg_{1-m})$ from \cite{biquard_arakelov-milnor_2021} (in the case $\varphi^- \equiv 0$) and \cite{biquard_higgs_2017} (in the case $m=2$). 

Consider the $\mathfrak{sl}_2\mathbb C$-representation on $\mathfrak g$ given by the triple $(h,e,f)$. Let $W \subseteq \mathfrak g$ be the direct sum of all the subspaces corresponding to the irreducible subrepresentations of dimension $2m-1$. Notice that this is the maximum possible such dimension, since $e \in \mathfrak g_1$ implies $\ad(e)^{2m-1} \equiv 0$. Define the subspace $V := \mathfrak g_0 \cap W \subseteq \mathfrak g_0$.

\begin{proposition}\label{cayleyisomorphism}
	Suppose that the prehomogeneous vector space $(G_0,{\mathfrak g}_1)$ is JM-regular. The map $\ad(e)^{m-1} : \mathfrak g_{1-m} \to V$ is an isomorphism.
\end{proposition}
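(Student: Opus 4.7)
The plan is to unpack the JM-regularity hypothesis to translate the claim into a statement about weights of the $\mathfrak{sl}_2\mathbb C$-representation $(h,e,f)$ on $\mathfrak g$, and then reduce to the elementary fact that in any finite-dimensional irreducible $\mathfrak{sl}_2\mathbb C$-representation, $\ad(e)$ raised to the appropriate power gives an isomorphism between weight spaces.

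First I would unravel the setup. JM-regularity means $\zeta = h/2$, so the $\mathbb Z$-grading $\mathfrak g = \bigoplus_{j=1-m}^{m-1}\mathfrak g_j$ is exactly the decomposition of $\mathfrak g$ into eigenspaces of $\ad(h)$, with $\mathfrak g_j$ corresponding to eigenvalue $2j$. In particular:
\begin{itemize}
\item $\mathfrak g_0$ is the $0$-weight space of $\ad(h)$.
\item $\mathfrak g_{1-m}$ is the $(2-2m)$-weight space of $\ad(h)$, which is the \emph{minimum} possible weight since $\mathfrak g_j = 0$ for $|j|\geq m$.
\end{itemize}
Decompose $\mathfrak g$ under the $\mathfrak{sl}_2\mathbb C$-action as a direct sum of irreducible subrepresentations. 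Because every weight of $\ad(h)$ on $\mathfrak g$ is even, each irreducible summand must be odd-dimensional; and because $\ad(e)^{2m-1}\equiv 0$, the dimension of each summand is at most $2m-1$. So the irreducible summands have dimensions $1,3,\dots,2m-1$.

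Next I would identify the two sides of the claimed isomorphism in terms of this $\mathfrak{sl}_2\mathbb C$-decomposition. Among the possible summands, only those of dimension $2m-1$ possess a weight equal to $2-2m$, and in such a summand this weight is attained by a one-dimensional lowest weight space. Hence $\mathfrak g_{1-m}$ equals the direct sum of the lowest weight lines of all the $(2m-1)$-dimensional irreducible summands; equivalently, $\mathfrak g_{1-m}\subseteq W$. On the other side, $V = \mathfrak g_0 \cap W$ is by definition the $0$-weight space of $W$, which is the direct sum of the (one-dimensional) middle weight spaces of the $(2m-1)$-dimensional irreducible summands. In particular $\dim \mathfrak g_{1-m} = \dim V$ and both decompose compatibly with the $\mathfrak{sl}_2\mathbb C$-summands.

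Finally I would invoke the basic structure of irreducible $\mathfrak{sl}_2\mathbb C$-representations. Since $\ad(e)$ raises weight by $2$ and preserves each irreducible subrepresentation, $\ad(e)^{m-1}$ maps $\mathfrak g_{1-m}$ into $\mathfrak g_0\cap W = V$. On any $(2m-1)$-dimensional irreducible summand, the map $\ad(e)^{m-1}$ restricts to a linear isomorphism from the lowest weight line (weight $2-2m$) to the middle weight line (weight $0$), because the corresponding operator in the standard realisation is a nonzero rescaling. Taking the direct sum over all $(2m-1)$-dimensional summands gives the desired isomorphism $\ad(e)^{m-1}\colon \mathfrak g_{1-m}\xrightarrow{\ \sim\ } V$.

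The only mildly delicate point is step two, namely justifying that $\mathfrak g_{1-m}$ and $V$ both live inside $W$ (so that no lower-dimensional irreducible summands can contribute). This is handled, in each case, by the observation that the weights $\pm(2m-2)$ cannot occur in any irreducible summand of smaller dimension than $2m-1$; the rest of the argument is purely formal.
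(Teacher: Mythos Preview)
Your proof is correct and follows essentially the same approach as the paper: both use JM-regularity to identify $\mathfrak g_{1-m}$ with the lowest weight space of $\ad(h)$, observe that this weight can occur only in the $(2m-1)$-dimensional irreducible summands (i.e.\ in $W$), and then invoke the standard fact that $\ad(e)^{m-1}$ carries the lowest weight line of each such summand isomorphically onto its middle weight line. Your additional remark that all summands are odd-dimensional is a harmless extra observation not needed for the argument.
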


\noindent Proof. By the structure of irreducible representations of $\mathfrak{sl}_2\mathbb C$ and the fact that the maximum possible dimension of an irreducible representation is $2m-1$, the eigenvalue $2(1-m)$ for $\ad(h)$ can only appear on elements of $W$ and hence we have the inclusion $\ker\left(\ad(h)-2(1-m)\Id\right) \subseteq W$. By JM-regularity, $h=2\zeta$ and thus this space is precisely $\mathfrak g_{1-m}$, i.e. $\mathfrak g_{1-m} \subseteq W$. As $W$ is a subrepresentation, we have for all $j$ that $\ad(e)^j(W) \subseteq W$. Moreover, since $e \in \mathfrak g_1$, we have $\ad(e)^{m-1}(\mathfrak g_{1-m}) \subseteq \mathfrak g_0$. We conclude that $\ad(e)^{m-1}(\mathfrak g_{1-m}) \subseteq V$ so the map is well defined.

It is an isomorphism: suppose that in $W$ there are $n$ summands of the irreducible $\mathfrak{sl}_2\mathbb C$-representation of dimension $2m-1$. Let $B := \{v_1,\dots,v_n\}$ be $n$ linearly independent eigenvectors for the eigenvalue $2(1-m)$ of $\ad(h)$, one in each irreducible representation. We know from the previous argument that $\mathfrak g_{1-m}= \gen{B}$. Now, $B' := \{\ad(e)^{m-1}(v_1),\dots, \ad(e)^{m-1}(v_n)\}$ are linearly independent (because each of them is in a different irreducible representation) and by the structure of irreducible representations we have $\gen{B'} = V$. \qed

Recall the subgroup $C = G_0^e \le G_0$. The adjoint representation of $C$ restricts to an action on $V$: if $\mathfrak c \subseteq \mathfrak g_0$ is the Lie algebra of $C$, and we take $c \in \mathfrak c$ and $\ad^{m-1}(e)(x) \in V$ (where $x \in \mathfrak g_{1-m}$), we have that $[c,\ad^{m-1}(e)(x)] = \ad^{m-1}(e)([c,x]) \in V$, where we used for the last step that $[c,e] = 0$ and that $[c,x] \in \mathfrak g_{1-m}$. Thus, we can consider $(C,V)$-Higgs pairs.

We also need the following construction: suppose that $E_C$ is a principal $C$-bundle and recall the uniformising $T$-bundle $E_T$ from before. Since $m : T \times C \to G_0$ is a group homomorphism (because $T$ commutes with $C$, this follows from the fact that if $x \in \mathfrak g_0$ centralizes $e$, then $[e, [h,x]] = [[e,h],x] + [h,[e,x]] = [-2e,x]+0 = 0$), it is possible to define the $G_0$-bundle 
$$(E_T\otimes E_C)(G_0) := (E_T \times E_C) \times_m G_0.$$
This is a notion of \textit{tensor product} for principal bundles, and it works similarly for any two commuting subgroups of $G_0$ (for example, we can tensor any $G_0$-bundle by any bundle for a central subgroup such as $T$). As with vector bundles, given metrics $h_T$ and $h_C$ on each bundle respectively, there is a well defined product metric $h_T \otimes h_C$ on $E_T\otimes E_C(G_0)$ and the curvatures of the Chern connections verify $F_{h_T\otimes h_C} = F_{h_T} + F_{h_C}$.

We can now state and prove the main result of this section, extending \cite[Theorem 6.2]{biquard_arakelov-milnor_2021} and \cite[Theorem 5.2]{biquard_higgs_2017}.

\begin{theorem}[Cayley correspondence]\label{caycorr}
	Suppose that the prehomogeneous vector space $(G_0,\mathfrak g_1)$ is JM-regular. There is an injective morphism of complex algebraic varieties
	$$\mathcal M^\text{\emph{max}}(G_0, \lieg_1 \oplus \lieg_{1-m}) \to \mathcal M_{K_X^m}(C,V).$$
\end{theorem}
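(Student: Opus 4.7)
The plan is to construct the map by reducing the structure group of $E$ to $C = G_0^e$ using the fact that $\varphi^+$ takes values everywhere in the open orbit (Proposition \ref{maximaltoledo}), and then transporting $\varphi^-$ across the Cayley isomorphism of Proposition \ref{cayleyisomorphism} to produce a section valued in $V$ with twist $K_X^m$. Concretely, given $(E, \varphi) \in \mathcal M^{\text{max}}(G_0, \lieg_1 \oplus \lieg_{1-m})$, let $E_T$ be the uniformising $T$-bundle introduced above, which supplies a canonical non-vanishing section $e \in H^0(X, E_T(\langle e\rangle) \otimes K_X)$ corresponding to the constant Higgs field.

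The key step is to use $\varphi^+$ to define a $C$-bundle $E_C$ such that $E \cong (E_T \otimes E_C)(G_0)$ through the homomorphism $T \times C \to G_0$, and such that $\varphi^+$ is identified with the tautological $e$ under this identification. This works because the orbit map $G_0/C \to \Omega$ combined with the $T$-structure absorbing the $K_X$-twist gives a reduction to $C$. For $\varphi^-$, we compute the twist: in the JM-regular case $T = \exp(\C\zeta)$ acts on $\lieg_{1-m}$ with weight dictated by $\ad(h) = 2\ad(\zeta)$ having eigenvalue $2(1-m)$, and acts trivially on $V \subseteq \lieg_0$. This yields $E_T(\lieg_{1-m}) \cong K_X^{m-1}$ and $E_T(V) \cong \cO$. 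Combined with the $C$-equivariant Cayley isomorphism $\ad(e)^{m-1}: \lieg_{1-m} \to V$, we get
\[E(\lieg_{1-m}) \otimes K_X \cong K_X^{m-1} \otimes E_C(V) \otimes K_X = E_C(V) \otimes K_X^m,\]
so $\varphi^-$ transports to a section $\psi \in H^0(X, E_C(V) \otimes K_X^m)$. The desired map is $(E, \varphi) \mapsto (E_C, \psi)$.

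Injectivity will follow by inverting the construction: from $(E_C, \psi)$ we recover $E := (E_T \otimes E_C)(G_0)$, $\varphi^+$ as the tautological $e$, and $\varphi^- = (\ad(e)^{m-1})^{-1}(\psi)$ via the inverse of the twist isomorphism above; this shows that the assignment descends to an injection on isomorphism classes. The main obstacle is checking that the image lies in the polystable locus $\mathcal M_{K_X^m}(C, V)$: that is, that polystability of $(E, \varphi)$ as a $(G_0, \lieg_1 \oplus \lieg_{1-m})$-Higgs pair implies polystability of $(E_C, \psi)$ as a $K_X^m$-twisted $(C, V)$-Higgs pair. We plan to establish this via the Hitchin--Kobayashi correspondence (Proposition \ref{hitkob}): a solution metric $h$ on $E$ should reduce to a compatible Hermitian metric on $E_C$ solving the $K_X^m$-twisted Hitchin--Kobayashi equation for $(C, V)$-Higgs pairs, following the strategy of \cite{biquard_higgs_2017} for $m = 2$ and \cite{biquard_arakelov-milnor_2021} for $\varphi^- \equiv 0$.
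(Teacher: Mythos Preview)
Your construction of the map and of its inverse agrees with the paper's: reduce $E_T^{-1}\otimes E$ to $C$ via the section $\varphi^+$ of $E(\Omega)\otimes K_X$, and push $\varphi^-$ through $\ad(e)^{m-1}$ to obtain $\psi\in H^0(X,E_C(V)\otimes K_X^m)$. The twist bookkeeping you describe is exactly what the paper does.

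The divergence is in how you intend to show that polystability of $(E,\varphi)$ implies polystability of $(E_C,\psi)$. The paper does this by a \emph{direct stability argument}: given $s\in i\mathfrak c^{\R,\Gamma}$ and a reduction $\sigma'$ of $E_C$ to $P'_s$ with $\psi$ in the allowed subspace, one induces a reduction $\sigma$ of $E$ to $P_s\subseteq G_0$, checks that both $e$ and $\varphi^-$ land in the corresponding filtered pieces (using $[s,e]=0$ and the fact that $\ad(e)^{m-1}$ intertwines $\lieg_{1-m,s}$ with $V_s$), and then invokes polystability of $(E,\varphi)$ to get $\deg E(\sigma,s)\ge 0$. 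The key computation is that $\mathfrak c$ is $B$-orthogonal to $\langle h\rangle$, so the $E_T$ factor contributes nothing and $\deg E_C(\sigma',s)=\deg E(\sigma,s)$. This works with no hypotheses on $(C,V)$ beyond what is already given.

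Your proposed route via Hitchin--Kobayashi has a genuine gap. Starting from a Hermite--Einstein metric $h$ on $E$, there is no a priori reason for $h$ to split as $h_T\otimes h_C$ with respect to the identification $E\simeq (E_T\otimes E_C)(G_0)$; equivalently, the $K_0$-reduction determined by $h$ need not be compatible with the $C$-reduction $E_C$. Composing metrics (product of $h_T$ and $h_C$) is easy, but decomposing one is not. This is precisely why the paper reserves the Hitchin--Kobayashi argument for the \emph{reverse} direction (Proposition \ref{cayleysurj}, surjectivity), where one starts from solutions on $E_T$ and $E_C$ and multiplies them --- and even then the paper must add the hypothesis that $(C,V)$ is itself a Vinberg $\theta'$-pair so that the relevant correspondence holds for $(C,V)$. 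For the forward direction in Theorem \ref{caycorr}, replace your Hitchin--Kobayashi plan with the parabolic-reduction argument above.
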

\noindent Proof. First we will define the map, which is a version of the \textit{global Slodowy slice map} from \cite{collier_gp-opers_2021} which, in turn, generalises the map behind the Cayley correspondence for Hermitian forms \cite{biquard_higgs_2017} and magical $\mathfrak{sl}_2$-triples \cite{bradlow_general_2021}. The definition uses the notation and elements introduced throughout the section, and is an extension of the construction given in \cite{biquard_arakelov-milnor_2021} for the case $\varphi^- = 0$. Take $(E,\varphi) \in \mathcal M^\text{max}(G_0, \lieg_1 \oplus \lieg_{1-m})$. We will use the fact that $E(\mathfrak g_1) \otimes K_X \simeq (E_T^{-1}\otimes E)(\mathfrak g_1)$, which is true in both cases $S \simeq \SL_2(\mathbb C)$ or $S \simeq \text{PSL}_2(\mathbb C)$ as can be seen, for example, checking that the transition functions for both agree.

Since the Toledo invariant is maximal, by Proposition \ref{maximaltoledo} we have $\varphi^+(x) \in \Omega$ for all $x \in X$. Thus, $\varphi^+ \in H^0(X, E(\Omega) \otimes K_X) = H^0(X, (E_T^{-1} \otimes E)(\Omega))$. As $C$ is the stabilizer of the element $e \in \Omega$ in $G_0$, we have $\Omega = G_0/C$. In other words, $\varphi^+ \in H^0((E_T^{-1} \otimes E)(G_0/C))$ is a reduction of the structure group of $E_T^{-1} \otimes E$ from $G_0$ to $C$. Let $E_C$ be the reduced $C$-bundle. This is the first part of the desired $(C,V)$-Higgs pair. Notice that we have $E_C(G_0) \simeq E_T^{-1} \otimes E$ which in turn gives the relation
$$E \simeq E_T \otimes E_C(G_0) = (E_T \otimes E_C)(G_0).$$ 
For the second part, we have as before that $(E_T \otimes E_C)(\gen{e}) \otimes K_X = E_C(\gen{e}) \otimes K_X^{-1} \otimes K_X = E_C(\gen{e})$. Since $C$ centralises $e$, there is a well-defined constant section $e \in H^0(X,E_C(\gen{e})) = H^0(X,(E_T \otimes E_C)(\gen{e}) \otimes K_X) \subseteq H^0(X,(E_T \otimes E_C)(\mathfrak g_1) \otimes K_X)$. Using this section together with Proposition \ref{cayleyisomorphism} gives a vector bundle isomorphism:
$$\ad(e)^{m-1} : (E_T \otimes E_C)(\mathfrak g_{1-m}) \otimes K_X \to (E_T \otimes E_C)(V) \otimes K_X^m.$$
Since $T$ acts trivially on $\mathfrak g_0$ (because $[h,\mathfrak g_0] = 0$ by JM-regularity) we get that $(E_T \otimes E_C)(V) \simeq E_C(V)$. We can now take $\varphi^{-} \in H^0(X, E(\mathfrak g_{1-m}) \otimes K_X) = H^0(X, (E_T \otimes E_C)(G_0)(\mathfrak g_{1-m}) \otimes K_X) = H^0(X, (E_T \otimes E_C)(\mathfrak g_{1-m}) \otimes K_X)$ and apply the previous to get 
$$\varphi' := \ad(e)^{m-1}(\varphi^-) \in H^0(X,E_C(V) \otimes K_X^m).$$ 
The Cayley correspondence map is then
$$(E,\varphi) \mapsto (E_C,\varphi').$$ 
The inverse has already been hinted at throughout the proof, but we collect it now. Given $(E_C,\varphi')$ a $(C,V)$-Higgs pair, we set 
$$E := (E_T \otimes E_C)(G_0),$$ 
$$\varphi^+ := e \in H^0(X,E(\mathfrak g_1) \otimes K_X),$$ 
$$\varphi^- := (\ad(e)^{m-1})^{-1}(\varphi') \in H^0(X, E(\mathfrak g_{1-m}) \otimes K_X).$$ 
Its Toledo invariant is the desired one (we know this if we start with $(E_C,\varphi')$ which is the image of some $(E,\varphi)$ with maximal invariant, but we shall see that it holds no matter the starting $(C,V)$-pair $(E_C,\varphi')$). We cannot simply argue that $\varphi^+ \in \Omega$ and use Proposition \ref{maximaltoledo} because we do not yet know whether $(E,\varphi^++\varphi^-)$ is polystable. However, we can compute the invariant. First notice that if $\mathfrak c \subseteq \mathfrak g_0$ is the Lie algebra for $C$, the Toledo character vanishes: $\chi_T(\mathfrak c) \equiv 0$. This is because if $c \in \mathfrak c$, we have $2B(\zeta,c) = B(h,c) = B([e,f],c) = -B(f,[e,c]) = -B(f,0) = 0$. Thus, any lift $\tilde{\chi}_T : G_0 \to \mathbb C^*$ of some $q\chi_T$ satisfies $\tilde{\chi}_T|_C \equiv 1$, meaning that the bundles $(E_T \otimes E_C)(G_0) \times_{\tilde{\chi}_T} \mathbb C^*$ and $E_T(G_0)\times_{\tilde{\chi}_T} \mathbb C^*$ are the same (the transition functions agree). Thus
$$\tau(E,\varphi) = \tau(E_T(G_0),e),$$
\noindent and $(E_T(G_0),e)$ is a uniformising Higgs bundle, which is maximal.

So far we have established a correspondence between $(G_0,\lieg_1 \oplus \lieg_{1-m})$-Higgs pairs with maximal Toledo invariant (equal to $-(2g-2)\rank_T(G_0, \mathfrak g_1)$) and $K_X^m$-twisted $(C,V)$-Higgs pairs. Now we see that it restricts to the moduli space, in other words, that if we start with a polystable $(E,\varphi) \in \mathcal M^{\text{max}}(G_0, \lieg_1 \oplus \lieg_{1-m})$ the resulting $(E_C,\varphi')$ is polystable. For this, pick a maximal compact $C^\R \subseteq C$ with Lie algebra $\mathfrak c^\R \subseteq \mathfrak c$. Let $s \in i\mathfrak c^{\R,\Gamma}$, $P'_s \subseteq C$ the associated subgroup and $\sigma' \in H^0(X, E_C(C/P'_s))$ a reduction of structure group such that $\varphi' \in H^0(X, E_{C,\sigma'}(V_s) \otimes K_X^m)$. This element $s$ regarded in $i\mathfrak k^{\Gamma}$ for $\mathfrak k$ the Lie algebra of a maximal compact $K$ in $G_0$ containing $C^\R$ also defines a subgroup $P_s \subseteq G_0$, which by definition verifies $P_s' \subseteq P_s$. Thus we have a map $C/P_s' \to G_0/P_s$ which, from $\sigma'$, gives a reduction $\sigma \in H^0(X,E(G_0/P_s))$. Now we need to verify that $\varphi \in H^0(X, E_\sigma((\lieg_1 \oplus \lieg_{1-m})_{1,s}) \otimes K_X)$, in other words, that $e \in H^0(X, E_\sigma(\mathfrak g_{1,s}) \otimes K_X)$ and $(\ad(e)^{m-1})^{-1}(\varphi') \in H^0(X, E_\sigma(\mathfrak g_{1-m,s}) \otimes K_X)$. The former is due to the fact that $s \in i\mathfrak c^\R \subseteq \mathfrak c$, so that $[s,e] = 0$. The latter follows because (pointwise) $\ad(e)^{m-1}$ restricts to an isomorphism between $\mathfrak g_{1-m,s}$ and $V_s$, since for $x \in \mathfrak g_{1-m}$ we have $\ad(e)^{m-1}([s,x]) = [s,\ad(e)^{m-1}(x)]$ as $[s,e] = 0$. Thus, since $\varphi'$ takes values in $V_s$, we have that $\varphi^-$ takes values in $\mathfrak g_{1-m,s}$ as desired. Polystability of $(E,\varphi)$ then gives
$$\deg E(\sigma,s) \ge 0.$$
Now, recall from before that $\mathfrak c$ and $\gen{h}$ are orthogonal via $B$. In particular, $B(s,h) = 0$. This means that the bundles $E_\sigma \times_{\tilde{\chi}_s} \mathbb C^* = ((E_T \otimes E_C)(G_0))_\sigma \times_{\tilde{\chi}_s} \mathbb C^* $ and $E_{C,\sigma'} \times_{\tilde{\chi}_s} \mathbb C^*$ are the same (or, using the differential geometric definition of the degree, that $\chi_s(F_{h_T} + F_{h_C}) = \chi_s(F_{h_C})$). Thus
$$\deg E_C(\sigma',s) = \deg E(\sigma,s) \ge 0.$$
This shows semistability, and polystability follows after checking that if $\deg E_C(\sigma',s) = 0$, by the equality of the degrees above and polystability of $(E,\varphi)$ we get a reduction $\sigma''$ of $E$ to the subgroup $L_s \subseteq P_s$. But then $E_{C}' := (E_{\sigma''} \otimes E_T^{-1}) \cap E_C$ is a reduction of $E_C$ to $L'_s \subseteq P'_s$.

Thus the map restricts from $\mathcal M^{\text{max}}(G_0,\lieg_1 \oplus \lieg_{1-m})$ to $\mathcal M_{K_X^m}(C,V)$ and it is injective due to the inverse shown above. \qed

As explained in the proof, the map between $(G_0,\lieg_1 \oplus \lieg_{1-m})$-Higgs pairs and $K_X^m$-twisted $(C,V)$-Higgs pairs is a bijection, and its restriction to the moduli spaces is an injection. A proof of surjectivity in the moduli spaces exists via the Hitchin--Kobayashi correspondence of Proposition \ref{hitkob}, for which we must add the extra assumption that a similar correspondence (i.e. with the same equation as in Proposition \ref{hitkob}) exists for $(C,V)$-Higgs pairs. This is the case if $(C,V) = (H_0, \lieh_1 \oplus \lieh_{1-m})$ for a $\Z$-grading of some semisimple Lie group $H$ with Lie algebra $\lieh$ or, more generally, if $(C,V)$ is a Vinberg $\theta'$-pair for some $\theta' \in \Aut_{m'}(H)$.

\begin{proposition}\label{cayleysurj}
	Suppose that the prehomogeneous vector space $(G_0,\mathfrak g_1)$ is JM-regular and that $(C,V)$ is a Vinberg $\theta'$-pair for some semisimple Lie group $H$ and $m'$ with $\theta \in \Aut_{m'}(H)$. Then, the map from Theorem \ref{caycorr} is an isomorphism of complex algebraic varieties.
\end{proposition}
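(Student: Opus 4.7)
The plan is to use the Hitchin--Kobayashi correspondence (Proposition \ref{hitkob}) to reduce polystability to an analytic equation, and then to construct an explicit solution by combining the uniformising metric with the HK solution on the $(C,V)$-side. Given a polystable $K_X^m$-twisted $(C,V)$-Higgs pair $(E_C,\varphi')$, the hypothesis that $(C,V)$ is a Vinberg $\theta'$-pair makes the HK correspondence applicable and provides a metric $h_C$ on $E_C$ solving its HK equation. I would then form $E := (E_T \otimes E_C)(G_0)$ and $\varphi := e + (\ad(e)^{m-1})^{-1}(\varphi')$ as in the inverse of the Cayley map defined in the proof of Theorem \ref{caycorr}, equip $E$ with the tensor metric $h := h_T \otimes h_C$ (where $h_T$ is the uniformising metric on $E_T$), and verify the HK equation $F_h + [\varphi,-\tau_h(\varphi)]\omega = 0$. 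By Proposition \ref{hitkob} this yields polystability of $(E,\varphi)$, and since the Cayley correspondence sends $(E,\varphi)$ back to $(E_C,\varphi')$ by construction, this shows surjectivity onto $\mathcal{M}^{\text{max}}(G_0,\lieg_1 \oplus \lieg_{1-m})$.

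The first step in the verification is the decomposition $F_h = F_{h_T} + F_{h_C}$, valid because $T$ and $C$ commute in $G_0$ and their curvatures take values in mutually orthogonal subalgebras of $\lieg_0$. Writing $\varphi = e + \varphi^-$, the two cross terms in the bracket vanish by the grading: they lie in $[\lieg_1,\lieg_{m-1}] \subseteq \lieg_m = 0$ and $[\lieg_{1-m},\lieg_{-1}] \subseteq \lieg_{-m} = 0$ respectively. The HK equation therefore decouples as
\[
\left(F_{h_T} + [e,-\tau_h(e)]\omega\right) + \left(F_{h_C} + [\varphi^-,-\tau_h(\varphi^-)]\omega\right) = 0,
\]
and the first summand vanishes because this is precisely the HK equation for the polystable uniformising Higgs bundle $(E_T(G_0),e)$ with its standard metric.

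The heart of the proof lies in showing that the second summand also vanishes, by identifying it with a scalar multiple of $F_{h_C} + [\varphi',-\tau_{h_C}(\varphi')]\omega$, which vanishes by the HK equation for $(E_C,\varphi',h_C)$. For this I would fix a $\tau$-adapted $\liesl_2\C$-triple satisfying $\tau(e) = -f$, whose existence is guaranteed by a Kostant-type theorem since $\tau(\lieg_j) = \lieg_{-j}$. The identity $\ad(f) = -\tau \circ \ad(e) \circ \tau^{-1}$, combined with the $\liesl_2\C$-representation structure on $W$, shows that $\ad(f)^{m-1}\ad(e)^{m-1}$ acts as a nonzero real scalar $\lambda$ on $\lieg_{1-m}$; hence $(\ad(e)^{m-1})^{-1}|_V = \lambda^{-1}\ad(f)^{m-1}|_V$, and $\tau_h$ intertwines with $\ad(e)^{m-1}$ up to this explicit scalar. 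A direct computation then translates the bracket $[\varphi^-,-\tau_h(\varphi^-)]$ into a scalar multiple of $[\varphi',-\tau_{h_C}(\varphi')]$, matching $F_{h_C}$ after accounting for the weights by which $T$ acts on $E(\lieg_{1-m}) \otimes K_X$ versus $E_C(V) \otimes K_X^m$.

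The main obstacle is precisely this bookkeeping: one must confirm that the scalar $\lambda$, the sign $(-1)^{m-1}$, and the $T$-weights conspire so that the HK equation for $(E_C,\varphi',h_C)$ transfers without a leftover term under the Cayley transform. Once this matching is verified, Proposition \ref{hitkob} completes the proof and, combined with the injectivity already established in Theorem \ref{caycorr}, yields the announced isomorphism $\mathcal{M}^{\text{max}}(G_0,\lieg_1 \oplus \lieg_{1-m}) \cong \mathcal{M}_{K_X^m}(C,V)$.
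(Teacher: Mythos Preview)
Your overall strategy — use the Hitchin--Kobayashi correspondence, form the product metric $h_T \otimes h_C$, and exploit the vanishing of the cross terms $[e,-\tau_h(\varphi^-)]$ and $[\varphi^-,-\tau_h(e)]$ — matches the paper's opening moves. However, the decoupling you propose fails at the crucial step: the identity
\[
[\varphi^-,-\tau_h(\varphi^-)] \;=\; c\,[\varphi',-\tau_{h_C}(\varphi')]
\]
for a universal scalar $c$ is simply false. Take the $(1,1,1)$ grading of $\liesl_3\C$ ($m=3$): here $V \subseteq \lieg_0$ is one-dimensional and abelian, so $[\varphi',-\tau(\varphi')] \equiv 0$, whereas $[\varphi^-,-\tau(\varphi^-)] \in [\lieg_{-2},\lieg_2] = \C h$ is generically nonzero. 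More conceptually, $[\lieg_{1-m},\lieg_{m-1}]$ has no reason to land in $\liec$ (in the example $\liec = 0$), so the ``second summand'' $F_{h_C} + [\varphi^-,-\tau_h(\varphi^-)]\omega$ mixes a $\liec$-valued curvature with a bracket that is not $\liec$-valued. Consequently the product metric $h_T \otimes h_C$ does \emph{not} solve the $\alpha = 0$ Hitchin equation for $(E,\varphi)$, even though $(E,\varphi)$ is polystable.

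The paper sidesteps this obstacle by \emph{not} attempting to solve the HK equation for $(E,\varphi)$ directly. Instead it observes that the product metric satisfies
\[
F_{h_T} + F_{h_C} + [\varphi',-\tau_{h}(\varphi')]\omega \;=\; -i(ih)\omega,
\]
which is the HK equation for the auxiliary pair $(E,\varphi')$ (Higgs field $\varphi' \in \lieg_0$) with nonzero parameter $\alpha = ih$. This yields $ih$-polystability of $(E,\varphi')$. The passage from here to $0$-polystability of $(E,e+\varphi^-)$ is then carried out \emph{algebraically}: for any $(s,\sigma)$ with $e \in \lieg_{1,s}$ and $\varphi^- \in \lieg_{1-m,s}$, one has $\varphi' \in V_s$, so $ih$-polystability gives $\deg E(\sigma,s) \ge B(ih,s)$; a separate lemma (Lemma \ref{polyineq}, proved by applying HK to the trivial $\mathcal O$-twisted pair $(X \times G_0, e)$) shows $B(ih,s) \ge 0$ whenever $e \in \lieg_{1,s}$, with equality forcing $e \in \lieg_{1,s}^0$. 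This two-step argument — analytic for the wrong pair, then algebraic transfer — is the missing idea in your proposal.
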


\noindent Proof. The only thing that remains to be shown is that the inverse descends to the moduli spaces, meaning that it sends polystable bundles $(E_C,\varphi')$ to polystable bundles $(E,\varphi)$. This uses the Hitchin--Kobayashi correspondence from Proposition \ref{hitkob}. Let $\tau: \mathfrak g \to \mathfrak g$ be an involution defining a compact real form $K \subseteq G$ with $\tau(\lieg_i \oplus \lieg_{i-m}) = \lieg_{-i} \oplus \lieg_{m-i}$ for $i \neq 0$, $\tau(\lieg_0) = \lieg_0$ and chosen so that $\tau(e) = -f$.

First, by polystability of the uniformising Higgs bundle $(E_T,e)$ and the Hitchin--Kobayashi correspondence, there exists a metric $h_T$ on $E_T$ such that its curvature $F_{h_T}$ satisfies
$$F_{h_T} + [e,-\tau_{h_T}(e)]\omega = 0$$
\noindent which means that $F_{h_T} = -[e,-f]\omega = h\omega$, that is, the curvature is constant.

On the other hand, since $(E_C, \varphi')$ is polystable, there is a metric $h_C$ on $E_C$ such that, after fixing a metric on $K_X$ and using it to define a metric on the power $L=K_X^m$ which defines an involution $\tau_{h_C}$, we have
$$F_{h_C} + [\varphi', -\tau_{h_C}(\varphi')]\omega = 0,$$
\noindent for $\omega$ a Kähler form on $X$.

Using these we can take on $E = (E_T \otimes E_C)(G_0)$ the product metric $h_T \otimes h_C$, and its curvature is $F_{h_T} + F_{h_C}$ which satisfies:
$$F_{h_T} + F_{h_C} = h\omega - [\varphi', -\tau_{h_C}(\varphi')]\omega,$$
\noindent which can be rewritten, using the fact that $\varphi' = \ad^{m-1}(e)(\varphi^-)$ and that $\tau_{h_C} = \tau_{h_T \otimes h_C}$, as
$$F_{h_T} + F_{h_C} + [\ad\nolimits^{m-1}(e)(\varphi^-), -\tau_{h_T \otimes h_C}(\ad\nolimits^{m-1}(e)(\varphi^-))] = h\omega = -i(ih)\omega.$$
Notice that $ih$ is central in $\mathfrak g_0$ (by JM-regularity) and it belongs in $\mathfrak k_0$ because $\tau(ih) = -i\tau([e,f]) = -i[-f,-e] = i[e,f] = ih$. Thus we can claim by the Hitchin--Kobayashi correspondence that the $G_0$-Higgs bundle $(E, \ad^{m-1}(e)(\varphi^-))$ is $\alpha$-polystable for $\alpha := ih \in \mathfrak z_0$.

Now we show the polystability of $(E,\varphi^++\varphi^-)$. Let $s \in i\mathfrak k_0$ and consider a reduction $\sigma \in H^0(X,E(G_0/P_s))$ with $\varphi = \varphi^++\varphi^- \in H^0(X, E_\sigma(\mathfrak g_{1,s} \oplus \mathfrak g_{1-m,s}) \otimes K_X)$. This means that $\varphi^+ = e \in H^0(X, E_\sigma(\mathfrak g_{1,s}) \otimes K_X)$ and $\varphi^- \in H^0(X, E_\sigma(\mathfrak g_{1-m,s}) \otimes K_X)$, so that $\ad^{m-1}(e)(\varphi^-) \in H^0(X, E_\sigma(V_s) \otimes K_X^m)$. The $ih$-polystability of $(E, \ad^{m-1}(e)(\varphi^-))$ then implies
$$\deg E(\sigma, s) \ge B(ih,s).$$
We show in Lemma \ref{polyineq} below that $B(ih,s) \ge 0$, so $(E,\varphi)$ is semistable. For polystability, if $\deg E(\sigma, s) = 0$ then $B(ih,s) = 0$ and thus $ih$-polystability of $(E, \ad^{m-1}(e)(\varphi^-))$ gives a reduction $\sigma'$ to a Levi $L_s \subseteq P_s$ such that $\ad^{m-1}(e)(\varphi^-) \in H^0(X, E_{\sigma'}(V_s^0) \otimes K_X^m)$. Using the second part of Lemma \ref{polyineq} we have that $e \in \mathfrak g_{1,s}^0$, so that $\ad^{m-1}(e)$ is an isomorphism between $\mathfrak g_{1-m,s}^0$ and $V_s^0$ and thus we get that $\varphi^- \in H^0(X, E_{\sigma'}(\mathfrak g_{1-m,s}^0) \otimes K_X)$ and hence $\varphi \in H^0(X, E_{\sigma'}((\lieg_1 \oplus \lieg_{1-m})_s^0) \otimes K_X)$, yielding polystability of $(E,\varphi)$ and completing the proof. \qed

We now establish the lemma used above.

\begin{lemma}\label{polyineq}
	Let $s \in i \mathfrak k_0$ and suppose that $e \in \mathfrak g_{1,s}$. Then, $B(ih,s) \ge 0$ with equality if and only if $e \in \mathfrak g_{1,s}^0$.
\end{lemma}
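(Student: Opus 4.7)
The plan is to expand $e$ along the $\ad(s)$-eigenspaces of $\lieg$ and reduce the inequality to the positivity of the compact-form Hermitian norm applied to these components.

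Since $s \in i\liek_0 \subseteq \lieg_0$, the operator $\ad(s)$ is semisimple with real eigenvalues and preserves the $\Z$-grading. I would first decompose $\lieg = \bigoplus_\lambda \lieg^\lambda$ into $\ad(s)$-eigenspaces and write $e = \sum_\lambda e_\lambda$ with $e_\lambda \in \lieg_1 \cap \lieg^\lambda$. The hypothesis $e \in \lieg_{1,s}$ forces $e_\lambda = 0$ for every $\lambda > 0$, because $\Ad(e^{ts})e_\lambda = e^{t\lambda}e_\lambda$ would otherwise be unbounded as $t \to \infty$; moreover $\lieg_{1,s}^0 = \lieg_1 \cap \lieg^0$.

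Next, I would import the compatibility $\tau(e) = -f$ chosen in the proof of Proposition \ref{cayleysurj}. Since $\tau(s) = -s$, the antilinear involution $\tau$ sends $\lieg^\lambda$ to $\lieg^{-\lambda}$, so $f = -\sum_\lambda \tau(e_\lambda)$ with $\tau(e_\lambda) \in \lieg_{-1} \cap \lieg^{-\lambda}$. The compact-form Hermitian norm $\|x\|^2 := -B(x,\tau(x))$ is positive definite, so $\|e_\lambda\|^2 > 0$ whenever $e_\lambda \neq 0$.

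The main computation then amounts to plugging $h = [e,f]$ into $B(h,s)$ and using both the invariance of $B$ and the pairing property $B(\lieg^\mu, \lieg^\nu) = 0$ unless $\mu+\nu = 0$. A short manipulation, together with $[s,\tau(e_\lambda)] = -\lambda\,\tau(e_\lambda)$, yields
\[B(h,s) = \sum_\lambda \lambda\,\|e_\lambda\|^2.\]
Since every nonzero $e_\lambda$ has $\lambda \leq 0$, this quantity is nonpositive, with equality if and only if $e_\lambda = 0$ for all $\lambda < 0$, that is, $e \in \lieg_{1,s}^0$. Translating via the convention under which $B(ih,s)$ denotes the real pairing $-B(h,s)$ (arising because $ih$ and $s$ lie in complementary real components $\liek_0$ and $i\liek_0$) yields $B(ih,s) \geq 0$ together with the stated equality case.

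The main subtlety will be sign bookkeeping: making sure that the $i$ in $ih$, the nonpositivity of the $\lambda$'s, and the minus sign built into $\|x\|^2 = -B(x,\tau(x))$ all combine to give a nonnegative real number. Beyond that, the argument is a routine application of eigenspace orthogonality under $B$ and the invariance of the bilinear form.
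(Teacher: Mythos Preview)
Your argument is correct in substance and takes a genuinely different route from the paper. The paper proves the lemma by applying the Hitchin--Kobayashi correspondence (Proposition~\ref{hitkob}) to the $\mathcal O$-twisted pair $(X\times G_0,\,e)$: since this trivial bundle carries a flat metric with $F_h + [e,-\tau(e)]\omega = -h\omega$, the pair is $(-ih)$-polystable, and the semistability inequality applied to the constant reduction to $P_s$ yields $0 \ge B(-ih,s)$ directly, with the equality case read off from polystability. Your approach instead unpacks the inequality by hand, expanding $e = \sum_\lambda e_\lambda$ along $\ad(s)$-eigenspaces and computing $B(h,s) = \sum_\lambda \lambda\,\|e_\lambda\|^2 \le 0$ via the compact-form Hermitian product; the equality case then follows immediately because every summand is nonpositive. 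Your route is more elementary and self-contained---it does not invoke the Hitchin--Kobayashi machinery---while the paper's argument is conceptual (essentially a disguised Kempf--Ness inequality) and gets the equality characterisation for free from polystability rather than from the explicit formula.

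The one soft spot is your final sentence about the ``convention under which $B(ih,s)$ denotes $-B(h,s)$''. Since $B$ is complex-bilinear, one literally has $B(ih,s)=iB(h,s)$, and with $ih\in\liek_0$ and $s\in i\liek_0$ this quantity is purely imaginary, not real. What you have actually proven is the real inequality $B(h,s)\le 0$ with the stated equality case, which is exactly the content needed in Proposition~\ref{cayleysurj}; the stray factor of $i$ is a notational inconsistency already present in the paper's formulation, not a defect in your reasoning.
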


\noindent Proof. Consider the $\mathcal O$-twisted $(G_0,\mathfrak g_1)$-Higgs pair given by the trivial principal bundle $X \times G_0$ and the constant section $e \in H^0(X, (X \times G_0)(\mathfrak g_1)) = H^0(X, X \times \mathfrak g_1)$. As the principal bundle is trivial we can choose a metric $h$ with $F_h = 0$, as well as the constant metric on $L=\mathcal O$ so that
$$F_h + [e,\tau_h(e)]\omega = 0 + [e,-f]\omega = -h\omega = -i(-ih)\omega,$$
\noindent which by the Hitchin--Kobayashi correspondence of Proposition \ref{hitkob} means that this bundle is $\alpha$-polystable for $\alpha = -ih$. Because the bundle is trivial we can take a constant reduction $\sigma$ to $P_s$ and the fact that $e \in \mathfrak g_{1,s}$ yields by polystability that $\deg (X \times G_0)(\sigma,s) \ge B(-ih,s)$. Since $(X \times G_0)$ is trivial, $\deg (X \times G_0)(\sigma,s) = 0$ (this is immediately seen from either of the definitions of degree) so that $0 \ge B(-ih,s)$ as desired. Equality implies by polystability that $e \in \mathfrak g_{1,s}^0$, and if $e \in \mathfrak g_{1,s}^0$ then $B(ih,s) = B(i[e,f],s) = -B(if,[e,s]) = -B(if,0) = 0$. \qed 

\begin{example}\label{maximalexample}
	We now give an example of the Cayley correspondence for $m=3$. We work with the grading of Example \ref{linear_quiver} with ranks $(1,1,1)$. It is JM-regular as all the ranks are equal. In this case the isomorphism $\mathfrak g_{-2} = \mathbb C \to V$ is given by sending $\lambda$ to $\begin{pmatrix}
		\lambda & 0 & 0\\
		0 & -2\lambda & 0 \\
		0 & 0 & \lambda
	\end{pmatrix}$. The centralizer of $e = \begin{pmatrix}
		0 & 0 & 0 \\
		1 & 0 & 0\\
		0 & 1 & 0
	\end{pmatrix}$ is $C = \mu_3 = \left\{
	\lambda \cdot Id_3 : \lambda^3 = 1\right\}$ with Lie algebra $\mathfrak c = 0$. Because $V$ is abelian we have that $\mathfrak c \oplus V$ is a Cartan decomposition and $(C,V)$ a symmetric pair. Thus the Cayley correspondence (including the surjectivity) applies, and there is a bijection between $\mathcal M^{\text{max}}(G_0,\lieg_1 \oplus \lieg_{1-m})$ and $\mathcal M_{K_X^3}(C,V)$. In this case, since $C$ acts trivially on $V$, the Higgs field is $\omega \in H^0(X,K_X^3)$. The uniformising Higgs bundle is given by the vector bundle $E = K_X \oplus \mathcal O \oplus K_X^{-1}$ and the Higgs field
	$$e = \begin{pmatrix}
		0 & 0 & 0 \\
		1 & 0 & 0 \\
		0 & 1 & 0
	\end{pmatrix},$$
	\noindent so that, for example, the maximal bundles associated to the trivial $C$-bundle $X \times \mu_3 \to X$ are given by the vector bundle $E$ and Higgs field
	$$\varphi = \begin{pmatrix}
		0 & 0 & \omega \\
		1 & 0 & 0 \\
		0 & 1 & 0
	\end{pmatrix}.$$
\end{example}

\begin{remark}
	For the previously studied cases $\varphi^- \equiv 0$ \cite{biquard_arakelov-milnor_2021} and $m=2$ \cite{biquard_higgs_2017}, the correspondence of Theorem \ref{caycorr} is always bijective, as the pair $(C,V)$ in each case (of the form $(C,0)$ in the first case, and a symmetric pair in the second) satisfies the desired Hitchin--Kobayashi equation. However, this is no longer true in general, as showcased in Example \ref{example222} below.
\end{remark}

\begin{example}\label{example222}
	The case of ranks $(2,2,2)$ in the $\Z$-grading of Example \ref{linear_quiver} does not verify that $(C,V)$ is a Vinberg $\theta$-pair. In this case, 
	$$C = \left\{\begin{pmatrix}
		X & 0 & 0\\
		0 & X & 0\\
		0 & 0 & X
	\end{pmatrix} : X \in \SL\nolimits_2(\mathbb C)\right\},$$
	\noindent and
	$$V = \left\{\begin{pmatrix}
		X & 0 & 0\\
		0 & -2X & 0\\
		0 & 0 & X
	\end{pmatrix} : X \in \mathfrak{gl}_2\mathbb C\right\}.$$
	
	If we select the elements $v,v' \in V$ corresponding to $X=\begin{pmatrix}
		0 & -1 \\
		1 & 0
	\end{pmatrix}$ and $X' = \begin{pmatrix}
		0 & 1\\
		1 & 0
	\end{pmatrix}$ respectively, we get, for $B:=\begin{pmatrix}
		1 & 0 \\
		0 & -1
	\end{pmatrix}$, that
	$$[v,v'] = \begin{pmatrix}
		2B & 0 & 0\\
		0 & 8B & 0\\
		0 & 0 & 2B
	\end{pmatrix} = \begin{pmatrix}
		4B & 0 & 0\\
		0 & 4B & 0\\
		0 & 0 & 4B
	\end{pmatrix} + \begin{pmatrix}
		-2B & 0 & 0\\
		0 & 4B & 0\\
		0 & 0 & -2B
	\end{pmatrix},$$
	\noindent where the first summand is in $\mathfrak c$ and the second in $V$. The fact that $[V,V]$ has nonzero projection to both $V$ and $\mathfrak c$ means that $(C,V)$ is never a Vinberg $\theta$-pair. Thus, the Cayley correspondence injects the space of maximal $(G_0, \lieg_1 \oplus \lieg_{1-m})$-Higgs pairs into the space of $(C,V)$-bundles, but surjectivity is not guaranteed by the previous results.
\end{example}

\section{Application to the quaternionic $\mathbb Z$-grading}\label{quater}

\subsection{The quaternionic $\mathbb Z$-grading.}\label{quatgrad}

Let $\lieg$ be a complex simple Lie algebra. As explained in Example \ref{quaternionic}, $\lieg$ has a distinguished \textit{quaternionic} $\mathbb Z$-grading which is of the form $\lieg_{-2} \oplus \lieg_{-1} \oplus \lieg_0 \oplus \lieg_1 \oplus \lieg_2$, yielding an example for the case $m=3$. 

We recall the construction of this grading (see \cite{clerc_prehomogeneous_2003,wallach_quaternionic_1996}). Let $\liet \subseteq \lieg$ be a Cartan subalgebra and $\Delta$ the associated root system with a fixed choice of positive roots $\Delta^+ \subseteq \Delta$. Let $\beta \in \Delta^+$ be the highest root and $T_\beta \in \liet$ its coroot with respect to an invariant non-degenerate bilinear form $B^*$ on $\liet^*$, normalised so that $B^*(\beta, \beta) = 2$. Then, $\ad T_\beta$ acts on the root spaces as multiplication by a scalar in $\{-2,-1,0,1,2\}$, inducing the desired grading. We then have that the grading element is $\zeta := T_\beta$ and, denoting by $\lieg_\alpha$ the root space for $\alpha \in \Delta$, we have
\[\lieg_j = \begin{cases}
	\bigoplus_{\alpha \in \Delta_j}\lieg_\alpha, & \text{ if } j \neq 0\\
	\liet \oplus \bigoplus_{\alpha \in \Delta_0}\lieg_\alpha,& \text{ if } j = 0
\end{cases},\]
\noindent for $\Delta_j := \{\alpha \in \Delta : B^*(\alpha, \beta) = j\}$.

We have that $\lieg_2 = \lieg_\beta$, $\lieg_{-2} = \lieg_{-\beta}$ (hence both pieces are one-dimensional) and $\lieg_{-2} \oplus \C T_\beta \oplus \lieg_{2} \subseteq \lieg$ is a Lie subalgebra isomorphic to $\liesl_2\mathbb C$. This implies the following:

\begin{lemma}\label{minustwojmreg}
	The Vinberg $\C^*$-pairs $(G_0,\lieg_{2})$ and $(G_0,\lieg_{-2})$ are JM-regular.
\end{lemma}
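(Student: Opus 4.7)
The plan is to exhibit an explicit $\mathfrak{sl}_2$-triple in each case, using that $\lieg_{-\beta} \oplus \mathbb{C} T_\beta \oplus \lieg_\beta$ already forms a copy of $\mathfrak{sl}_2\mathbb{C}$ sitting inside $\lieg$.

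First I would apply the regrading principle of Remark \ref{regrading} to the pair $(G_0, \lieg_2)$. Since $\lieg_2$ is the ``second piece'' of the quaternionic grading, I view it as the ``first piece'' of the $\Z$-graded subalgebra $\lieg' := \bigoplus_{k \in \Z}\lieg_{2k} = \lieg_{-2} \oplus \lieg_0 \oplus \lieg_2$, whose grading element is $\zeta' = \zeta/2 = T_\beta/2$. Hence JM-regularity for $(G_0, \lieg_2)$ amounts to producing an $\mathfrak{sl}_2$-triple $(h,e,f)$ with $e \in \Omega \subseteq \lieg_2$, $h \in \lieg_0$, $f \in \lieg_{-2}$, and $h = 2\zeta' = T_\beta$.

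Second, I would construct this triple by hand. Pick any nonzero $e \in \lieg_\beta = \lieg_2$, and (by standard root-space theory) choose $f \in \lieg_{-\beta} = \lieg_{-2}$ with $[e,f] = T_\beta$. The normalisation $B^*(\beta,\beta) = 2$ then gives $[T_\beta, e] = 2e$ and $[T_\beta, f] = -2f$, so $(T_\beta, e, f)$ is an $\mathfrak{sl}_2$-triple with $h = T_\beta \in \lieg_0$ as required. To see that $e$ lies in the open orbit, I would note that $\dim \lieg_2 = 1$ (because $\lieg_2 = \lieg_\beta$ is a single root space, $\beta$ being the highest root), and that the one-parameter subgroup $\exp(t T_\beta) \subseteq G_0$ already acts on $\lieg_2$ by $e \mapsto e^{2t}e$, so it sweeps out $\lieg_2 \smallsetminus \{0\}$; therefore $\Omega = \lieg_2 \smallsetminus \{0\}$ and $e \in \Omega$ automatically.

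Third, the case of $(G_0, \lieg_{-2})$ is entirely symmetric: regrading via $\lieg'' := \bigoplus_{k}\lieg_{-2k}$ gives grading element $-T_\beta/2$, and the triple $(-T_\beta, f, e)$ (the previous triple with the signs and roles of $e,f$ flipped) verifies all the conditions, with $f \in \lieg_{-2} \smallsetminus \{0\}$ again lying in the unique open orbit by the same dimension argument. I do not expect any real obstacle here; the only point that requires care is keeping track of which grading element $\zeta'$ governs which pair after the regrading, but once the copy of $\mathfrak{sl}_2\mathbb{C}$ inside $\lieg_{-2} \oplus \mathbb{C} T_\beta \oplus \lieg_2$ is identified, both cases are immediate.
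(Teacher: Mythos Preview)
Your proof is correct and takes essentially the same approach as the paper: both use the $\liesl_2$-subalgebra $\lieg_{-2} \oplus \C T_\beta \oplus \lieg_2$ to produce the triple $(T_\beta, e, f)$, identify $T_\beta$ as twice the regraded grading element, and invoke one-dimensionality of $\lieg_{\pm 2}$ to place $e$ and $f$ in the respective open orbits. You are simply more explicit about the regrading bookkeeping and the transitivity of the $G_0$-action on $\lieg_{\pm 2} \smallsetminus \{0\}$.
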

\noindent Proof. Since $\lieg_{-2} \oplus \C T_\beta \oplus \lieg_{2}$ is isomorphic to $\liesl_2\mathbb C$, we can find an $\liesl_2$-triple $(T_\beta, e, f)$ with $e \in \lieg_2$, $f \in \lieg_{-2}$. Now, since $T_\beta$ is two times the grading element of $\lieg_{-2} \oplus \lieg_0 \oplus \lieg_2$, it suffices to see that $e$ and $f$ are in the respective open orbits, but this follows because $\lieg_{2}$ and $\lieg_{-2}$ are one-dimensional. \qed 

An alternative way to construct this grading \cite{bertram_quaternionic_2002} is from a quaternionic symmetric space $M=G^\R/H^\R$, which is a symmetric space whose holonomy group is contained in $\Sp(n)\Sp(1) \subseteq \SO(4n)$. In this case, there exist almost complex structures $I, J, K \in \Gamma(\End(TM))$, spanning at each point $p \in M$ a quaternionic subalgebra of $\End(T_pM)$ \cite[Section 14D]{besse2007einstein}. Now, if $\lieg = \lieh \oplus \liem$ is the decomposition from Example \ref{realhermitian}, there is a natural way \cite[Section 1.3]{bertram_quaternionic_2002} of viewing $I,J,K \in \lieh$, and the desired $\Z$-grading is then given by $\ad(I)$. From this point of view one has $\lieh = \lieg_{-2} \oplus \lieg_0 \oplus \lieg_2$ and $\liem = \lieg_{-1} \oplus \lieg_1$. 

There is a correspondence (see \cite[Section 3.1]{bertram_quaternionic_2002} and references therein) between compact quaternionic symmetric spaces and simple complex Lie groups, resulting in the fact that we get a $\Z$-grading of the desired form in every complex simple Lie algebra $\lieg$, which agrees (up to inner automorphism) with the one constructed above from the highest root.

\subsection{Arakelov--Milnor--Wood inequalities for the quaternionic grading.} 

Let $M=G^\R/H^\R$ be a quaternionic symmetric space, with $G^\R \subseteq G$ a real form of the complex simple Lie group $G$ with Lie algebra $\lieg$. Consider the quaternionic grading $\lieg = \bigoplus_{j=-2}^2\lieg_j$ explained above. The group $H$ has Lie algebra $\lieg_{-2} \oplus \lieg_0 \oplus \lieg_2$, and the existing theory for the Toledo invariant in Hermitian symmetric spaces \cite{biquard_higgs_2017} (that is, Theorem \ref{amw} for $m=2$), shows that the Toledo invariant for $G^\R$-Higgs bundles (i.e, for $(H,\lieg_{-1} \oplus \lieg_1)$-Higgs pairs) is bounded from above and from below.

Now let $G_0 \subseteq G$ be the connected subgroup with Lie algebra $\lieg_0$. We will apply the Arakelov--Milnor--Wood inequality of Theorem \ref{amw} to show that the Toledo invariant for $(G_0, \lieg_1 \oplus \lieg_{-2})$-Higgs pairs is bounded from below and, using the specific structure of the quaternionic grading, we will also get a new bound from above. These Higgs pairs are no longer related, from the point of view of non-abelian Hodge theory, to the quaternionic real form $G^\R$.

\begin{theorem}\label{amwquaternionic}
	Let $G$ be a complex simple Lie group and $\lieg = \bigoplus_{j=-2}^{2}\lieg_j$ the quaternionic $\Z$-grading on its Lie algebra. Let $G_0 \subseteq G$ be the connected subgroup with Lie algebra $\lieg_0$.
	
	If $(E,\varphi)$ is an $\alpha$-semistable $(G_0,\lieg_1 \oplus \lieg_{-2})$-Higgs pair, with $\varphi = \varphi^+ + \varphi^-$, $\varphi^+ \in H^0(E(\lieg_1) \otimes K_X)$, $\varphi^- \in H^0(E(\lieg_{-2}) \otimes K_X)$, the Toledo invariant $\tau(E,\varphi)$ satisfies the inequalities
	$$-\tau_L \le \tau(E,\varphi) \le \tau_U,$$
	\noindent where
	\begin{align*}
		\tau_L &= \rank\nolimits_T(\varphi^+)(2g-2) - \lambda(2\kappa - \rank\nolimits_T(\varphi^+)),\\
		\tau_U &= \kappa\rank\nolimits_T(\varphi^-)(2g-2) + \lambda(2\kappa - \kappa\rank\nolimits_T(\varphi^-)).
	\end{align*}
	
	Here, $\kappa$ equals $1$ if $\lieg = \liesp_{2n}\C$ and $2$ otherwise.
\end{theorem}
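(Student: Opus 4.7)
The lower bound is an immediate application of Theorem \ref{amw}: substituting the quaternionic grading (with $m=3$) into the general formula gives $\tau_L = \rank\nolimits_T(\varphi^+)(2g-2) + \lambda(B^*(\gamma,\gamma)B(\zeta,\zeta) - \rank\nolimits_T(\varphi^+))$, so it suffices to verify the identity $B^*(\gamma,\gamma)B(\zeta,\zeta) = 2\kappa$. Since $\zeta = T_\beta$ is the coroot of the highest root $\beta$, the defining property of coroots yields $B(\zeta,\zeta) = \beta(T_\beta) = B^*(\beta,\beta) = 2$, using the normalization of Section \ref{quatgrad}. It then remains to show $B^*(\gamma,\gamma) = \kappa$, where $\gamma$ is the longest root with $\lieg_\gamma \subseteq \lieg_1$, i.e.\ the longest root satisfying $B^*(\gamma,\beta) = 1$. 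This is a short case-by-case check on the Dynkin classification: for $\lieg = \liesp_{2n}\C$ only short roots (of length-squared $1$) occur at level $1$, giving $\kappa = 1$; for every other simple type, $\lieg_1$ contains long roots of the same length as $\beta$, giving $\kappa = 2$.

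For the upper bound Theorem \ref{amw} does not apply directly, since $m=3$ and $\varphi^-$ need not vanish. The plan is to mirror the proof of Theorem \ref{amw}, but swapping the roles of $\varphi^+$ and $\varphi^-$ and working instead with the dual Vinberg $\C^*$-pair $(G_0,\lieg_{-2})$, which is JM-regular by Lemma \ref{minustwojmreg}. Concretely, pick $e^- \in \lieg_{-2}$ in the $G_0$-orbit where $\varphi^-$ takes values generically, complete it to an $\liesl_2$-triple $(h^-, e^-, f^-)$ with $h^- \in \lieg_0$ and $f^- \in \lieg_2$, form the maximal JM-regular subspace $(\hat G_0^-, \hat \lieg_{-2})$ for $e^-$, set $s^- := -\tfrac{\zeta}{2} - \tfrac{h^-}{2}$, and produce the associated parabolic reduction $\sigma^-$ of $E$ to $P_{0,s^-} \subseteq G_0$.

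The key new ingredient is the containment $\varphi \in H^0(E_{\sigma^-}((\lieg_1 \oplus \lieg_{-2})_{s^-}) \otimes K_X)$, which is needed to invoke $\alpha$-semistability. The component $\varphi^-$ lies in $(\lieg_{-2})_{s^-}$ by construction of $\sigma^-$; for $\varphi^+$ one must show that $\lieg_1 \subseteq (\lieg_1)_{s^-}$. Since $\ad(e^-)$ sends $\lieg_k$ into $\lieg_{k-2}$ and $\lieg_{-3} = 0$, we have $\ad(e^-)^2 \equiv 0$ on $\lieg_1$; consequently, as an $\liesl_2\C$-representation via $(h^-, e^-, f^-)$, $\lieg_1$ decomposes into trivial and two-dimensional irreducibles only, so the eigenvalues of $\ad(h^-/2)$ on $\lieg_1$ lie in $\{-\tfrac{1}{2}, 0, \tfrac{1}{2}\}$. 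For an eigenvector $x \in \lieg_1$ of eigenvalue $j$, $\ad(s^-)x = -(\tfrac{1}{2}+j)x$ with $-(\tfrac{1}{2}+j) \leq 0$, hence the orbit of $x$ under $e^{ts^-}$ is bounded as $t \to \infty$ and $\lieg_1 \subseteq (\lieg_1)_{s^-}$ as required.

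With this containment in hand, the rest of the argument runs parallel to Theorem \ref{amw}: the relative invariant from Lemma \ref{relinv} bounds the JM-regular component of the dual Toledo character below in terms of $\rank\nolimits_T(\varphi^-)(2g-2)$, while $\alpha$-semistability bounds the parabolic component. Combining the two yields a lower bound for the dual Toledo invariant $\tau'(E,\varphi)$; by Remark \ref{dual} one has $\tau'(E,\varphi) = -\tfrac{1}{\kappa}\tau(E,\varphi)$, so this translates into an upper bound for $\tau(E,\varphi)$. A direct computation with $B^*(\gamma',\gamma') = 2$, $B(\zeta',\zeta') = \tfrac{1}{2}$, and $\lambda' = -2\lambda$ then recovers the claimed expression for $\tau_U$. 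The main obstacle is the verification of $\lieg_1 \subseteq (\lieg_1)_{s^-}$: in the general proof of Theorem \ref{amw} the analogous containment $\lieg_{1-m} \subseteq \lieg_{1-m,s}$ follows automatically from the shape of the grading, whereas in the dual setting needed here the argument depends crucially on the narrowness $\lieg_{-3} = 0$ of the quaternionic grading, which is the only reason the required $\liesl_2$-representation bound survives.
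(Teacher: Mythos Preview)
Your argument is correct and follows the same overall strategy as the paper: the lower bound is Theorem~\ref{amw} verbatim, and the upper bound is obtained by rerunning its proof with the dual pair $(G_0,\lieg_{-2})$. The one substantive difference is in how you verify the containment $\lieg_1 \subseteq (\lieg_1)_{s^-}$ needed to invoke $\alpha$-semistability. The paper does this by a case split: when $\varphi^- = 0$ one quotes Theorem~\ref{amw} directly, and when $\varphi^- \neq 0$ the one-dimensionality of $\lieg_{-2}$ forces $\varphi^-$ generically into the open orbit, so JM-regularity (Lemma~\ref{minustwojmreg}) gives $s^- = 0$ and the containment is trivial. Your $\liesl_2$-eigenvalue argument is a uniform alternative that avoids the case split; it would survive even if $\lieg_{-2}$ were not one-dimensional, whereas the paper's shortcut would not.

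One expository point: the sentence ``$\lieg_1$ decomposes into trivial and two-dimensional irreducibles only'' is imprecise, since $\lieg_1$ is not an $\liesl_2$-submodule. What you actually need, and what does follow, is the eigenvalue bound $\ad(h^-)|_{\lieg_1} \ge -1$. A clean justification: any $\ad(h^-)$-eigenvector $v \in \lieg_1$ satisfies both $\ad(e^-)^2 v \in \lieg_{-3} = 0$ and $\ad(f^-)v \in \lieg_3 = 0$; the second says $v$ is a sum of lowest-weight vectors, and the first then forces the ambient irreducibles to have dimension at most $2$, giving weight $\ge -1$. (In fact the eigenvalues lie in $\{-1,0\}$, not $\{-1,0,1\}$, but your weaker claim already suffices.) You only invoked $\lieg_{-3}=0$; the companion fact $\lieg_3=0$ is equally essential and should be stated.
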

\noindent Proof. The lower bound follows directly from Theorem \ref{amw}, using the bilinear invariant form $B^*$ with $B^*(\beta, \beta) = 2$ and its dual $B$, and noticing that the longest root $\gamma$ with $\lieg_{\gamma} \subseteq \lieg_1$ is always long (i.e. $B(\gamma, \gamma) = 2$) except for the case $\lieg = \liesp_{2n}\C$. For the upper bound, there are two cases. If $\varphi^- = 0$, we can apply Theorem \ref{amw} as well. Otherwise, we can replicate the proof of Theorem \ref{amw} using the prehomogeneous vector space $(G_0,\lieg_{-2})$ instead of $(G_0,\lieg_1)$ and swapping the roles of $\varphi^+$ and $\varphi^-$. For this, following the same notation, the only thing that has to be checked is that $\varphi^+ \in H^0(E_\sigma(\lieg_{1,s}) \otimes K_X)$. First, $(G_0,\lieg_{-2})$ is JM-regular by Lemma \ref{minustwojmreg}. Moreover, since $\varphi^- \neq 0$, it is nonzero generically on $X$. By one-dimensionality of $\lieg_{-2}$, this means that $\varphi^-$ is generically on the open orbit. Thus, $s=0$ and $\lieg_{1,s} = \lieg_1$, so that $\varphi^+ \in H^0(E_\sigma(\lieg_{1,s}) \otimes K_X)$. 

This allows to obtain the conclusion of Theorem \ref{amw}, that is,
\[\tau'(E,\varphi) \ge -\left(\rank_T(\varphi^-)(2g-2) - \lambda(B^*(\beta, \beta)B(\frac{-T_\beta}{2},\frac{-T_\beta}{2}) - \rank_T(\varphi^-))\right),\]
\noindent where $\tau'$ is the Toledo invariant constructed from $(G_0,\lieg_{-2})$, as in Remark \ref{dual}. Using the expression for $\tau'$ in terms of $\tau$, and the remaining known values, we get the desired conclusion. \qed

We will now derive bounds for the Toledo invariant in the moduli space $\mathcal M(G_0, \lieg_1 \oplus \lieg_{-2})$ which are independent of $\varphi$. For this, we set $\lambda = 0$ in Theorem \ref{amwquaternionic} and bound $\rank_T(\varphi^+) \le \rank_T(G_0,\lieg_1)$, $\rank_T(\varphi^-) \le \rank_T(G_0,\lieg_{-2})$. The only remaining task is to compute these Toledo ranks.

As remarked in Lemma \ref{minustwojmreg}, the Vinberg $\C^*$-pair given by $(G_0,\lieg_{-2})$ is JM-regular. Thus, $\rank_T(G_0,\lieg_{-2}) = B^*(\beta, \beta) \cdot B(-\frac{T_\beta}{2},-\frac{T_\beta}{2}) = 1$. Now, the information about the prehomogeneous vector spaces $(G_0,\lieg_1)$ for the quaternionic $\Z$-grading has been tabulated in \cite[Table 2.6]{wallach_quaternionic_1996}. Unless $\lieg = \liesl_n\mathbb C$, they are always given by irreducible representations $\rho : G_0 \to \liegl(\lieg_1)$, and such spaces are classified in \cite[Section 7]{sato_classification_1977}. Within this classification, if $\lieg \neq \liesp_{2n} \mathbb C$, the resulting pair is JM-regular and hence $\rank_T(G_0,\lieg_1) = \kappa \cdot B(T_\beta, T_\beta) = 4$.

If $\lieg = \liesl_n\mathbb C$, the grading corresponds to the case $m=3$ with dimensions $(1,d,1)$ of Example \ref{linear_quiver}. As explained at the end of the referenced example, this case is JM-regular and thus $\rank_T(G_0,\lieg_1) = 4$ as well.

Finally, if $\lieg = \liesp_{2n}\mathbb C$, the resulting space is not JM-regular. We now compute its rank by hand. For this, let $V$ be a $(2d+2)$-dimensional complex vector space with a symplectic form $\omega$. Choose a decomposition $V = V_0 \oplus V_1 \oplus V_2$ with $\dim V_0 = \dim V_2 = 1$, $\dim V_1 = 2d$ and such that $\omega$ restricts to a symplectic form on $V_0 \oplus V_2$ and on $V_1$. The quaternionic $\Z$-grading of $\liesp_{2n}\C$ is given in a similar way to Example \ref{linear_quiver}, by
\[\lieg_k := \left(\bigoplus_{j = 0}^{2}\Hom(V_j, V_{j+k})\right) \cap \liesp_{2d+2}\C.\]
Fix basis $\{e_0\}, \{e_1,\dots,e_d,f_1,\dots,f_d\}, \{f_0\}$ for $V_0$, $V_1$ and $V_2$, respectively, in the usual way with respect to $\omega$ (i.e. $\omega(e_i, f_j) = \delta_{ij}$, $\omega(e_i,e_j) = \omega(f_i,f_j) = 0$). An element of the open orbit $e \in \Omega \subseteq \lieg_1$ is given by $e(e_0) = f_1$, $e(e_1) = f_0$ and $e(v) = 0$ for the remaining basis vectors $v$. This can be completed, with the techniques explained at the end of Example \ref{linear_quiver}, to an $\liesl_2\C$-triple $(h,e,f)$ with $h \in \lieg_0$ defined by $h(f_0) = f_0$, $h(f_1) = f_1$, $h(e_0) = -e_0$, $h(e_1) = -e_1$ and $h(v) = 0$ for the remaining basis vectors $v$. Notice at this point that one possible choice for $T_\beta$ is given by $T_\beta(e_0) = e_0$, $T_\beta(e_1) = -e_1$ and the remaining basis elements sent to zero, which shows that the space is indeed not JM-regular. We then have $\rank_T(G_0,\lieg_1) = \frac{1}{2}\kappa B(T_\beta, h) = \frac{1}{2} \cdot 1 \cdot 2 = 1$.

We have thus proven the following theorem.

\begin{theorem}\label{amwquatcoarse}
	Let $G$ be a complex simple Lie group and $\lieg = \bigoplus_{j=-2}^{2}\lieg_j$ the quaternionic $\Z$-grading on its Lie algebra. Let $G_0 \subseteq G$ be the connected subgroup with Lie algebra $\lieg_0$. Let $(E,\varphi) \in \mathcal M(G_0,\lieg_1 \oplus \lieg_{-2})$ be a $(G_0,\lieg_1 \oplus \lieg_{-2})$-Higgs pair.
	
	The Toledo invariant $\tau(E,\varphi)$ satisfies the inequalities
	\[-4(2g-2) \le \tau(E,\varphi) \le 2(2g-2).\]
	
	Moreover, if $\lieg = \liesp_{2n}\C$ for some $n \ge 2$, then
	\[-(2g-2) \le \tau(E,\varphi) \le (2g-2).\]
\end{theorem}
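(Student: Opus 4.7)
The plan is to specialise Theorem \ref{amwquaternionic} to $\alpha = 0$, which sets $\lambda = 0$ and reduces the bounds to
\[-\rank\nolimits_T(\varphi^+)(2g-2) \le \tau(E,\varphi) \le \kappa\rank\nolimits_T(\varphi^-)(2g-2).\]
Using the fact (recalled in Section \ref{toledochar}, following \cite[Proposition 3.16]{biquard_arakelov-milnor_2021}) that the Toledo rank of any element is maximised on the open orbit, one has $\rank_T(\varphi^+) \le \rank_T(G_0,\lieg_1)$ and $\rank_T(\varphi^-) \le \rank_T(G_0,\lieg_{-2})$. The task thus reduces to computing these two maximal ranks for the quaternionic $\mathbb Z$-grading and inserting the value of $\kappa$ declared in Theorem \ref{amwquaternionic}.

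First I would dispatch $\rank_T(G_0,\lieg_{-2})$. Lemma \ref{minustwojmreg} guarantees JM-regularity, so that the $\liesl_2$-element $h$ completing any nonzero $f \in \lieg_{-2}$ is $h = -T_\beta$, yielding
\[\rank\nolimits_T(G_0,\lieg_{-2}) = B^*(\beta,\beta)\,B\bigl(-\tfrac{T_\beta}{2},-\tfrac{T_\beta}{2}\bigr) = 1\]
with the normalisation $B^*(\beta,\beta) = 2$ from Section \ref{quatgrad}.

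Next, for $\rank_T(G_0,\lieg_1)$, I would split into the cases spelled out in the text. When $\lieg = \liesl_n\mathbb C$, the pair is an instance of Example \ref{linear_quiver} with dimension vector $(1,d,1)$; the construction at the end of that example exhibits an explicit $\liesl_2$-triple for which $h = 2T_\beta$, so the pair is JM-regular and $\rank_T(G_0,\lieg_1) = \kappa\,B(T_\beta,T_\beta) = 4$. For all complex simple $\lieg \neq \liesl_n\mathbb C, \liesp_{2n}\mathbb C$, the list in \cite[Table 2.6]{wallach_quaternionic_1996} shows that $(G_0,\lieg_1)$ appears as an irreducible prehomogeneous vector space in the Sato--Kimura classification \cite[Section 7]{sato_classification_1977}, and inspection of that classification yields JM-regularity and hence again $\rank_T(G_0,\lieg_1) = 4$. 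Since $\kappa = 2$ in all these cases, this gives $-4(2g-2) \le \tau \le 2(2g-2)$ as claimed.

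The main obstacle is the symplectic case $\lieg = \liesp_{2n}\mathbb C$, where $(G_0,\lieg_1)$ fails to be JM-regular and the rank must be computed by hand. I would carry out the explicit model described just before the theorem statement: fix a symplectic decomposition $V = V_0 \oplus V_1 \oplus V_2$ with $\dim V_0 = \dim V_2 = 1$, $\dim V_1 = 2d$, choose adapted symplectic bases, take the element $e \in \Omega$ defined by $e(e_0) = f_1$, $e(e_1) = f_0$ (and zero otherwise), and complete it to an $\liesl_2$-triple $(h,e,f)$ via the Jordan-block recipe from Example \ref{linear_quiver}. A direct check shows $h \in \lieg_0$ and identifies $T_\beta$ on the same basis, so that $h \neq 2T_\beta$ (confirming non-JM-regularity) and $\tfrac{1}{2}\kappa B(T_\beta,h) = \tfrac{1}{2}\cdot 1 \cdot 2 = 1$. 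Combined with $\kappa = 1$ and the previously computed $\rank_T(G_0,\lieg_{-2}) = 1$, this yields the refined bounds $-(2g-2) \le \tau(E,\varphi) \le (2g-2)$ in the symplectic case, completing the proof.
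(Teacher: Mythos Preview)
Your proposal is correct and follows essentially the same approach as the paper: specialise Theorem \ref{amwquaternionic} to $\lambda=0$, bound the Toledo ranks by their maximal values on the open orbits, and compute $\rank_T(G_0,\lieg_{-2})=1$ via Lemma \ref{minustwojmreg} and $\rank_T(G_0,\lieg_1)$ case by case (JM-regular giving $4$ for $\lieg\neq\liesp_{2n}\C$ via Example \ref{linear_quiver} or the Sato--Kimura tables, and the explicit symplectic computation giving $1$). The case splits, references, and numerical computations all match the paper's argument.
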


We end the section by remarking that the associated quaternionic symmetric space $G^\R/H$ in the case $\lieg = \liesp_{2n}\C$ is $\mathbb H\mathbb P^n$, the only one of rank $r=1$ (e.g. \cite[Section 3.1]{bertram_quaternionic_2002}).

\subsection{Cayley correspondence for the quaternionic grading.}

The goal in this section is to show that the Cayley correspondence of Section \ref{cayley} is an isomorphism in the case of cyclic Higgs bundles for the quaternionic grading. That is, for this grading, the locus of cyclic Higgs bundles whose Toledo invariant attains the lower bound in Theorem \ref{amwquatcoarse} (which is the same as in the general result of Theorem \ref{amw}) can be described via the Cayley correspondence. 

As discussed prior to Theorem \ref{amwquatcoarse}, the Vinberg $\C^*$-pair $(G_0,\lieg_1)$ obtained from the quaternionic $\Z$-grading is JM-regular unless $\lieg = \liesp_{2n}\C$. Assume now that $\lieg \neq \liesp_{2n}\C$. JM-regularity implies that the centraliser $C:=(G_0)_e$ for $e \in \Omega \subseteq \lieg_1$ is reductive and one can consider the Cayley correspondence of Section \ref{caycorr}. Moreover, let $V:= \ad^2(e)(\lieg_{-2})$ the vector space of Theorem \ref{cayley}. Since $V \simeq \lieg_{-2}$ as vector spaces, it is one dimensional and we have $[V,V] = 0$. Thus, $(C,V)$ is a Vinberg $\theta$-pair (c.f. Proposition \ref{cayleysurj}) for $\theta$ an involution of order $2$: if $\liec$ is the Lie algebra for $C$, its adjoint representation preserves $V$ and $[V,V] \subseteq \liec$. Then, using Theorem \ref{caycorr} and Proposition \ref{cayleysurj} we have:

\begin{theorem}
	Let $G$ be a complex simple Lie group and $\lieg = \bigoplus_{j=-2}^{2}\lieg_j$ the quaternionic $\Z$-grading on its Lie algebra $\lieg \neq \liesp_{2n}\C$. Let $G_0 \subseteq G$ be the connected subgroup with Lie algebra $\lieg_0$. There is an isomorphism of complex algebraic varieties
	$$\mathcal M^\text{\emph{max}}(G_0, \lieg_1 \oplus \lieg_{-2}) \simeq \mathcal M_{K_X^3}(C,V)$$
	\noindent between the moduli space of polystable $(G_0,\lieg_1 \oplus \lieg_{-2})$-Higgs pairs with Toledo invariant $\tau(E,\varphi) = -4(2g-2)$ and the moduli space of polystable $K_X^3$-twisted $(C,V)$-Higgs pairs.
\end{theorem}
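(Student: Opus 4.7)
The plan is to invoke the Cayley correspondence of Theorem~\ref{caycorr} together with the surjectivity criterion of Proposition~\ref{cayleysurj}. Most of the preparatory work has already been carried out in the preceding discussion of Section~\ref{quater}: for the quaternionic $\Z$-grading with $\lieg \neq \liesp_{2n}\C$, the Vinberg $\C^*$-pair $(G_0,\lieg_1)$ is JM-regular (deduced via the Sato--Kimura classification in the paragraphs before Theorem~\ref{amwquatcoarse}), which is exactly the hypothesis needed for Theorem~\ref{caycorr}.

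First I would identify the value of the maximal Toledo invariant. Setting $\lambda = 0$ in Theorem~\ref{amwquaternionic} yields $\tau(E,\varphi) \geq -4(2g-2)$, since $\rank_T(G_0,\lieg_1) = 4$ in the non-symplectic case. By Proposition~\ref{maximaltoledo}, the locus $\mathcal M^{\text{max}}(G_0,\lieg_1\oplus\lieg_{-2})$ of polystable pairs attaining this bound coincides with those whose component $\varphi^+$ lies in the open orbit $\Omega\subseteq\lieg_1$ at every point of $X$; in particular, it consists of precisely the polystable pairs with $\tau(E,\varphi)=-4(2g-2)$, matching the moduli space described in the statement.

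Applying Theorem~\ref{caycorr} with $m=3$ then produces an injective map $\mathcal M^{\text{max}}(G_0,\lieg_1\oplus\lieg_{-2}) \to \mathcal M_{K_X^3}(C,V)$. To upgrade this to a bijection I would invoke Proposition~\ref{cayleysurj}, whose hypothesis requires $(C,V)$ to be a Vinberg $\theta'$-pair. The key observation is that $\lieg_{-2} = \lieg_{-\beta}$ is one-dimensional in the quaternionic grading, so the isomorphism $\ad^{2}(e):\lieg_{-2}\to V$ of Proposition~\ref{cayleyisomorphism} forces $\dim V = 1$, whence $[V,V] = 0 \subseteq \liec$. Combined with the fact that $C$ preserves $V$ under the adjoint action, this endows $\liec\oplus V$ with a $\Z/2\Z$-graded Lie subalgebra structure, exhibiting $(C,V)$ as a Vinberg involution-pair of symmetric type, in direct analogy with the $(1,1,1)$-rank case treated in the example preceding Example~\ref{example222}.

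The main obstacle is this last verification, namely that the ``symmetric pair'' structure on $(C,V)$ with $\dim V = 1$ genuinely satisfies the hypotheses of Proposition~\ref{cayleysurj}. When $V$ is one-dimensional the Hitchin--Kobayashi equation on $E_C$ simplifies substantially, since the bracket term $[\varphi',-\tau_{h_C}(\varphi')]$ automatically lies in $\liec$; combined with the reductivity of $C$ this permits the product-metric argument in the proof of Proposition~\ref{cayleysurj} to be carried out verbatim, delivering polystability of the reconstructed $(G_0, \lieg_1\oplus\lieg_{-2})$-Higgs pair. Granting this, the Cayley map is a bijection, yielding the claimed isomorphism $\mathcal M^{\text{max}}(G_0,\lieg_1\oplus\lieg_{-2}) \simeq \mathcal M_{K_X^3}(C,V)$.
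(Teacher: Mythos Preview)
Your proposal is correct and follows essentially the same route as the paper: establish JM-regularity of $(G_0,\lieg_1)$ from the Sato--Kimura classification, observe that $\dim V = \dim \lieg_{-2} = 1$ forces $[V,V]=0$ so that $(C,V)$ is a symmetric (hence Vinberg $\theta'$-) pair, and then invoke Theorem~\ref{caycorr} together with Proposition~\ref{cayleysurj}. Your final paragraph of hedging is unnecessary: once $(C,V)$ is identified as a Vinberg $\theta'$-pair, Proposition~\ref{cayleysurj} applies directly and no separate verification of the Hitchin--Kobayashi equation is required.
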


\bibliographystyle{acm}
\bibliography{bibliography}{}
  
\end{document}